\DeclareMathOperator*{\argmin}{arg\,min}
\newcommand{\Prob}{\ensuremath{\mathbb{P}}}
\newcommand{\Ex}{\ensuremath{\mathbb{E}}}
\newcommand{\sgn}{\mathop{\mathrm{sgn}}}
\newcommand{\rev}[1]{#1}
\newtheorem{lemma}{Lemma}
\newtheorem{proposition}{Proposition}
\newtheorem{theorem}{Theorem}
\newtheorem{corollary}{Corollary}
\newtheorem{remark}{Remark}
\providecommand{\bibcommenthead}{}
\newenvironment{barticle}{}{}
\newenvironment{bbook}{}{}
\newenvironment{bchapter}{}{}
\newenvironment{botherref}{}{}
\begin{document}

\title{A belief-state restless bandit model for treatment adherence: Whittle indexability via partial conservation laws}

\author{%
Jos\'e Ni\~no-Mora \quad and \quad \'Angel Pellitero Garc\'{\i}a\\[0.5ex]
{\small Departamento de Estad\'istica, Universidad Carlos III de Madrid}\\
{\small Getafe (Madrid), Spain}\\[0.5ex]
{\small \texttt{jose.nino@uc3m.es} \quad \texttt{angelpelli13@gmail.com}}
}

\date{}
\maketitle

\begin{abstract}
We study clinically motivated capacity-constrained treatment-adherence outreach through a belief-state restless multi-armed bandit model, in which each patient is a partially observed two-state Markov decision process and interventions induce reset-type belief dynamics. For the discounted criterion, partial conservation law (PCL)-based conditions are used to establish single-patient threshold-policy optimality and indexability (threshold-indexability) and yield a closed-form Whittle index, threshold performance metrics, and an explicit optimal threshold map. We also prove a single-patient long-run average analogue on the invariant belief core and obtain an explicit average-criterion Whittle index. For the multi-patient model, the PCL-derived formulas give an analytic Lagrangian relaxation, efficient dual bounds, and computable Lagrangian index benchmark policies, including a forced-capacity variant. We analyze how the Whittle index depends on lapse and spontaneous-recovery probabilities. In large-scale experiments with two-type, three-type, and jittered finite-mixture populations, the Whittle and forced-capacity Lagrangian index policies are the strongest performers, while myopic prioritization can be substantially worse under tight capacity.
\end{abstract}

\noindent\textbf{Keywords:} Partially observed Markov decision processes, scheduling, treatment adherence, belief-state restless bandits, Whittle index
\medskip

\section{Introduction}
\label{s:intro}

Treatment \emph{adherence}---following a recommended regimen in dose, timing, and duration---is a major determinant of health outcomes; higher adherence is associated with improved clinical control and fewer adverse events across chronic conditions \citep{Peng2025AHTadherenceMortality,Chongmelaxme2020AsthmaAdherence,Nguyen2024InsulinAdherenceA1c}. Because health services can only deliver a limited number of outreach actions, such as calls or visits, this paper develops a scalable policy for prioritizing adherence outreach over time under capacity constraints. We focus on index policies and computable performance benchmarks, including Whittle indices and Lagrangian relaxation bounds.

Each patient is modeled as a two-state adherent/nonadherent \emph{partially observed Markov decision process} (POMDP) \citep{krishn25}. The adherence state is observed only when outreach occurs, and a high-touch intervention, or activation, produces an immediate reset-type adherence effect before subsequent Markov evolution. The controller acts on continuous beliefs and chooses in each period which \(M\) of \(N\) patients to activate, yielding a belief-state \emph{restless multi-armed bandit problem} (RMABP) \citep{whit88,nmmath23}. Whittle's index policy is attractive in this setting, but it requires proving indexability and computing the index in belief space.

Our proof uses the partial conservation laws (PCLs) verification theorem in \citet{nmmor20} for real-state models, which extends earlier work for discrete-state models in \citet{nmaap01,nmmp02,nmmor06}. Satisfaction of the \emph{PCL-indexability conditions}, a set of conditions on  reward and work metrics under threshold policies, implies simultaneously \emph{both} optimality of threshold policies \emph{and} Whittle indexability, which we call \emph{threshold-indexability}, avoiding sequential proofs of threshold optimality, threshold monotonicity, and index inversion. Thus PCL-indexability is a verification device to conclude threshold-indexability with an explicit closed-form Whittle index.

The closed-form performance metrics further give a computable Lagrangian relaxation bound. The single-patient Lagrangian is piecewise affine in the intervention price, the multi-patient bound is computed by bisection with \(O(N)\) work per iteration, and the marginal metrics define two Lagrangian index benchmarks: a standard positive-index rule and a forced-capacity rule that always activates the \(M\) largest indices.

Although our focus is on the discounted reward criterion, we also prove a corresponding threshold-indexability result under the long-run average reward criterion. 

\paragraph{Contributions}
We make six contributions: (i) a belief-state RMABP model for capacity-constrained adherence outreach with reset-type interventions; (ii) a PCL-based proof of discounted threshold-indexability and a closed-form Whittle index; (iii) a corresponding PCL-based long-run average threshold-indexability result on the invariant belief core; (iv) a compact illustration of the PCL route as an alternative to direct value-function and threshold-monotonicity proofs; (v) a closed-form single-patient Lagrangian, an \(O\!\bigl(N\log(1/\varepsilon)\bigr)\)-time bisection scheme for an \(\varepsilon\)-accurate dual bound, and two Lagrangian index benchmarks; and (vi) parameter-comparative statics and large-scale comparisons with myopic, round-robin, Lagrangian index, forced-capacity Lagrangian index, and passive policies. Across the reported experiments, Whittle and forced-capacity Lagrangian index policies are the strongest performers.

\subsection{Related work: belief-state adherence bandits and dual bounds}
\label{s:rwbsabdb}

RMABP models have been used to prioritize adherence interventions in large-scale deployments \citep{mateetal22}, where each beneficiary is modeled through a fully observed two-state engaging/non-engaging MDP\@. Belief-state adherence bandits with \emph{collapsing} or state-revealing observations under activation have also been studied \citep{Mate2020NeurIPS,Mate2021RiskAwareRMAB}. Those models and ours use Bayesian filtering in belief space; here, however, the belief update is deterministic---affine under passivity and reset under activation---conditional on the initial belief.

Lagrangian dual bounds are classical benchmarks for restless bandits \citep{whit88,wewe90}. In belief-state RMABs, the relaxation decouples the problem into single-arm POMDPs with an intervention charge, often requiring numerical value-function approximation, for example by point-based value iteration or rollout \citep{kazaetal19}. In contrast, our PCL analysis yields an explicit Whittle index, an analytic dual bound, and the marginal quantities used by the Lagrangian index benchmarks.

A related structure-exploiting analysis is in \citet{liuZhao10}'s belief-state channel-access model. They derive closed-form single-arm value functions and exploit a discounted passive-time staircase to search for the minimizing subsidy, obtaining an \(\varepsilon\)-accurate bound in \(O\!\bigl(N^2\log N\bigr)\) time and exactness outside a gray area. In our model, each arm's Lagrangian and resource terms are evaluated in constant time for a given multiplier, so each bisection step costs \(O(N)\) and the dual bound is computed in \(O\!\bigl(N\log(1/\varepsilon)\bigr)\) time.

The remainder of the paper is organized as follows. Section~\ref{s:model} gives the model and RMABP formulation. Section~\ref{s:iwip} introduces the Lagrangian relaxation, dual bound, and Whittle policy. Sections~\ref{s:pclbvtri}--\ref{s:avpclic} verify discounted PCL-indexability and derive the closed-form Whittle index and optimal threshold map. Section~\ref{s:dual-bound} derives the analytic Lagrangian bound and bisection scheme. Section~\ref{s:dmpimp} analyzes parameter dependence. Section~\ref{s:iatac} proves indexability under the long-run average criterion. Section~\ref{s:numerics} reports the numerical study, and Section~\ref{s:concl} concludes. Proofs and additional numerical summaries are in the Online Supplement.

\section{Model description and RMABP formulation}
\label{s:model}

We consider a cohort of $N$ patients monitored over discrete time
$t=0,1,\ldots$. In each period, at most $M<N$ patients can receive an
adherence-boosting intervention. Patient $n$ has a latent adherence state $Y_n(t)\in\{0,1\}$, with $Y_n(t)=1$ denoting adherence and $Y_n(t)=0$ nonadherence, and receives an action $A_n(t)\in\{0,1\}$, where $A_n(t)=1$ indicates an \emph{intervention} (the \emph{active} action in restless bandit terminology) and $A_n(t)=0$ indicates \emph{no intervention} (the \emph{passive} action), subject to the capacity constraint
\begin{equation}
\label{eq:resconst}
\sum_{n=1}^N A_n(t)\leqslant M, \quad t = 0, 1, \ldots.
\end{equation}
Conditional on $A_n(t)=a$, the latent state evolves as a controlled Markov
chain with the following transition matrices, where $0<p_n,q_n<1$:
\begin{equation}
\label{eq:Pn01}
\mathbf{P}_n^0=
\bordermatrix{~ & 0 & 1 \cr
               0 & 1-q_n & q_n \cr
               1 & p_n   & 1-p_n \cr},
\quad
\mathbf{P}_n^1=
\bordermatrix{~ & 0 & 1 \cr
               0 & p_n   & 1-p_n \cr
               1 & p_n   & 1-p_n \cr},
\end{equation}
Here $p_n$ is the \emph{lapse probability} from adherence and
$q_n$ is the \emph{spontaneous recovery probability} under no intervention. Under activation, the transition is of reset type: the next state is adherent with probability \(1-p_n\), regardless of the current state. For a currently nonadherent patient, passivity would instead yield spontaneous recovery with probability \(q_n\). We therefore assume a beneficial intervention effect,
\begin{equation}
\label{eq:positautocor}
1-p_n>q_n,
\end{equation}
which says that outreach improves the next-period adherence probability relative to unaided recovery. This is natural in the adherence-outreach setting, where spontaneous recovery without contact is not expected to exceed post-intervention adherence. Equivalently, under passivity the latent state has positive persistence parameter
\(\rho_n \triangleq 1-p_n-q_n>0\), so the passive belief update is increasing.

The latent state is observed only when an intervention is delivered. We
therefore use the \emph{belief state}
\begin{equation}
\label{eq:Xnt}
X_n(t)\triangleq \Prob\{Y_n(t)=0\mid \mathcal{F}_t\}\in[0,1],
\end{equation}
the posterior probability of nonadherence given the available information
$\mathcal{F}_t$. In this model the belief dynamics are deterministic:
\[
X_n(t+1)=
\begin{cases}
p_n+\rho_n X_n(t), & A_n(t)=0,\\
p_n,               & A_n(t)=1.
\end{cases}
\]

Throughout, we refer to each arm/project as a patient; we use the terms patient, project, and arm interchangeably.

Let $x\in[0,1]$ denote the belief of nonadherence and let $a\in\{0,1\}$ be the
action. Rewards accrue from adherence: under passivity the expected one-step
reward is $r_n(1~-~x)$, while under activation the reset yields reward $r_n$.
Thus
\[
R_n(x,a)\triangleq r_n\bigl[(1-x)+ax\bigr]
=
\begin{cases}
r_n, & a=1,\\
r_n(1-x), & a=0.
\end{cases}
\]

A policy $\pi\in\Pi(M)$ is any nonanticipative (possibly randomized) rule that
satisfies \eqref{eq:resconst}. Given an initial belief vector
$\mathbf{x}^0=(x_1^0,\ldots,x_N^0)$ and discount factor $\beta\in(0,1)$, and writing as $\Ex_{\mathbf{x}^0}^{\pi}$ the expectation under $\pi$ starting from $\mathbf{x}^0$, the
RMABP of concern is
\begin{equation}
\label{eq:rmabp}
V^*(\mathbf{x}^0)\triangleq
\sup_{\pi\in\Pi(M)}
\Ex_{\mathbf{x}^0}^{\pi}\!\left[
\sum_{t=0}^{\infty}\sum_{n=1}^{N} R_n\bigl(X_n(t),A_n(t)\bigr)\beta^{t}
\right].
\end{equation}

\begin{remark}[Deterministic belief dynamics]
\label{rem:deterministic-belief}
Given $\mathbf{x}^0$, the belief trajectory is deterministic for any
deterministic policy. We retain the
expectation notation to allow randomized policies and to match RMABP
conventions.
\end{remark}

\begin{remark}[Per-intervention costs]\label{rem:costs}
If interventions incur costs $c_n\geqslant 0$, replace $R_n(x,a)$ by
$\widetilde{R}_n(x,a)=R_n(x,a)-c_n a$. In the Lagrangian formulation below this
shifts the effective intervention price from $\lambda$ to $c_n+\lambda$, yielding
the cost-adjusted index $m_n(x)-c_n$. We focus on the case $c_n\equiv0$ for clarity.
\end{remark}

\rev{
For ease of reference, Table~\ref{tab:notation} summarizes the main recurring notation. Patient-specific quantities carry the subscript \(n\); in the single-patient analysis this subscript is suppressed.
}

\begin{table}[t]
\centering
\caption{Main recurring notation.}
\label{tab:notation}
\small
\setlength{\tabcolsep}{4pt}
\renewcommand{\arraystretch}{1.12}
\begin{tabular}{@{}p{0.31\textwidth}p{0.63\textwidth}@{}}
\toprule
Symbol & Meaning \\
\midrule
\(Y_n(t),X_n(t),A_n(t),R_n(x,a)\) & Latent state, nonadherence belief, intervention decision, and one-period reward. \\
\(p_n,q_n,\rho_n\) & Lapse probability, spontaneous-recovery probability, and passive persistence \(\rho_n=1-p_n-q_n\). \\
\(h(x),z_\infty,z_t,\tau(x,z)\) & Passive belief update, passive fixed point, threshold breakpoints, and first threshold-upcrossing time. \\
\(F,G,f,g\) & Threshold reward/work metrics and their marginal reward/work counterparts. \\
\(m_n,w_n,z_n^*(\lambda)\) & MP index, Whittle index, and optimal threshold at price \(\lambda\). \\
\(\lambda^*,D,d_n^*(x)\) & Dual-optimal price, Lagrangian dual bound, and Lagrangian benchmark index. \\
\(\bar V^\pi,\bar D,\gamma^\pi\) & Normalized policy value, normalized dual bound, and relative Lagrangian gap. \\
\(\mathsf X^{\mathrm{inv}},m^{\mathrm{avg}}\) & Invariant core and average-criterion Whittle index. \\
\bottomrule
\end{tabular}
\end{table}

\section{Lagrangian bound, indexability, and Whittle index}
\label{s:iwip}
Problem~\eqref{eq:rmabp} is an RMABP. Since exact dynamic programming is generally intractable, we use Whittle's index policy~\citep{whit88} and Lagrangian dual bounds. Relax the hard capacity constraint~\eqref{eq:resconst} to the discounted aggregate constraint
\begin{equation}
\label{eq:relaxp}
\Ex_{\mathbf{x}^0}^{\pi}\!\left[\sum_{t=0}^{\infty}\sum_{n=1}^{N} A_n(t)\beta^{t}\right]
\leqslant \frac{M}{1-\beta},
\end{equation}
and let \(\widehat{V}^*(\mathbf{x}^0)\) be the relaxed value. Then \(V^*(\mathbf{x}^0)\le\widehat{V}^*(\mathbf{x}^0)\). For a multiplier \(\lambda\geqslant 0\), define
\begin{equation}
\label{eq:lagrelax}
L(\mathbf{x}^0;\lambda) \triangleq \sup_{\pi\in \Pi}
\Ex_{\mathbf{x}^0}^{\pi}\!\left[
\sum_{t=0}^{\infty}\sum_{n=1}^{N} \bigl(R_n(X_n(t),A_n(t))-\lambda A_n(t)\bigr)\beta^{t}
\right]
+\frac{M}{1-\beta}\lambda,
\end{equation}
where \(\Pi\) is the unrestricted nonanticipative policy class. Weak duality gives \(\widehat{V}^*(\mathbf{x}^0)\leqslant L(\mathbf{x}^0;\lambda)\), hence the multi-patient Lagrangian dual bound
\begin{equation}
\label{eq:dualbound}
D(\mathbf{x}^0)\triangleq\inf_{\lambda\geqslant 0}L(\mathbf{x}^0;\lambda)
\end{equation}
upper-bounds the original RMABP value. When the infimum is attained, write a minimizer as \(\lambda^*\).

The relaxation decouples into \(N\) single-patient \(\lambda\)-price subproblems,
\begin{equation}
\label{eq:lambdasubp}
L_n(x_n^0;\lambda)\triangleq
\sup_{\pi_n\in \Pi_n}
\mathbb{E}_{x_n^0}^{\pi_n}\!\left[
\sum_{t=0}^{\infty}\bigl(R_n(X_n(t),A_n(t))-\lambda A_n(t)\bigr)\beta^{t}
\right],
\end{equation}
so \(L(\mathbf{x}^0;\lambda)=\sum_{n=1}^N L_n(x_n^0;\lambda)+M\lambda/(1-\beta)\). Each \(L_n\), and hence \(L\), is convex in \(\lambda\).

We say that patient \(n\)'s subproblem is \emph{indexable} if there is an index map \(w_n\colon [0,1]~\to~\mathbb R\) such that action 1 is optimal iff \(w_n(x_n)\geqslant \lambda\) and action 0 is optimal iff \(w_n(x_n)\le\lambda\). It is \emph{threshold-indexable} if, in addition, each \(\lambda\)-price problem has an optimal threshold policy that activates when the belief exceeds a threshold. We call \(w_n\) the \emph{Whittle index}. Under indexability, Whittle's policy evaluates \(w_n(X_n(t))\) each period and activates up to \(M\) patients with the largest nonnegative indices.

The standard route to indexability in ordered state spaces, as illustrated in \cite{liuZhao10}, is to first prove threshold optimality for each \(\lambda\), then prove monotonicity of the optimal threshold in \(\lambda\), and finally invert the threshold--price relation. We instead use the PCL verification theorem in \citet{nmmor20}: after checking structural identities for threshold reward and work metrics, the theorem yields threshold optimality, indexability, and an explicit Whittle index formula.

\rev{
\begin{remark}[Lagrangian index benchmarks]
\label{rem:relax-index-heuristic}
The same relaxation also defines index benchmarks that do not require Whittle indexability. Given a dual minimizer \(\lambda^*\), let
\begin{equation}
\label{eq:lagindex}
\begin{aligned}
d_n^*(x)\triangleq{}& R_n(x,1)-\lambda^*
+\beta\,\Ex\!\left[L_n(X_n(t+1);\lambda^*)\mid X_n(t)=x,A_n(t)=1\right]\\
&{}-R_n(x,0)
-\beta\,\Ex\!\left[L_n(X_n(t+1);\lambda^*)\mid X_n(t)=x,A_n(t)=0\right].
\end{aligned}
\end{equation}
The standard Lagrangian index policy activates positive \(d_n^*(X_n(t))\)'s up to capacity; the forced-capacity version activates the \(M\) largest \(d_n^*(X_n(t))\)'s regardless of sign. This is a structured value-function-approximation benchmark: the one-step comparison uses exact single-patient relaxed values at \(\lambda^*\), not the value of a feasible base policy. It is related to first-order relaxation and LP/Lagrangian index heuristics \citep{benm00,brownsmith20,gastetal24}. Unlike Whittle's policy, it depends on the initial population through \(\lambda^*\). Below we express \(d_n^*\) in terms of the same PCL marginal metrics used for the Whittle index and dual bound.
\end{remark}
}

\section{PCL-based verification theorem for threshold-indexability}
\label{s:pclbvtri}
We now fix a single patient and suppress its label. For a nonanticipative policy \(\pi\), let
\[
F(x,\pi)=\mathbb{E}^{\pi}_{x}\!\left[\sum_{t\geqslant 0} R(X(t),A(t))\beta^{t}\right],
\qquad
G(x,\pi)=\mathbb{E}^{\pi}_{x}\!\left[\sum_{t\geqslant 0} A(t)\beta^{t}\right]
\]
be discounted reward and work. The \(\lambda\)-price performance is \(\mathcal{L}(x,\pi;\lambda)=F(x,\pi)-\lambda G(x,\pi)\), and the optimal value is
\begin{equation}
\label{eq:lppg}
L(x;\lambda)=\sup_{\pi\in\Pi}\mathcal{L}(x,\pi;\lambda).
\end{equation}

The \emph{\(z\)-threshold policy}, for \(z\in\overline{\mathbb R}=\mathbb R\cup\{-\infty,\infty\}\), activates when \(x>z\) and is passive otherwise; write \(F(x,z)\), \(G(x,z)\), and \(\mathcal L(x,z;\lambda)\) for its metrics. The family is \emph{thresholdable} if some threshold map \(z^*(\lambda)\) satisfies, for all initial states \(x\),
\begin{equation}
\label{eq:Lxlambdazstarl}
L(x;\lambda)=\sup_{z\in\overline{\mathbb R}}\mathcal L(x,z;\lambda)=\mathcal L(x,z^*(\lambda);\lambda).
\end{equation}
\emph{Threshold-indexability} means both optimality of threshold policies (\emph{thresholdability}) and Whittle indexability.

For \(a\in\{0,1\}\), let \(\langle a,z\rangle\) take action \(a\) at time zero and then follow threshold \(z\). Define marginal reward and work by
\[
f(x,z)=F(x,\langle1,z\rangle)-F(x,\langle0,z\rangle),
\qquad
g(x,z)=G(x,\langle1,z\rangle)-G(x,\langle0,z\rangle).
\]
When \(g(x,z)>0\), define the MP metric \(m(x,z)=f(x,z)/g(x,z)\) and the diagonal MP index \(m(x)=m(x,x)\).

\rev{
\begin{remark}[Threshold-metric formulation of the Lagrangian index]
\label{rem:lagindex-metrics}
Under thresholdability, the Lagrangian index in Remark~\ref{rem:relax-index-heuristic} can be written as
\[
d^*(x)=
\mathcal L\bigl(x,\langle1,z^*(\lambda^*)\rangle;\lambda^*\bigr)-
\mathcal L\bigl(x,\langle0,z^*(\lambda^*)\rangle;\lambda^*\bigr)
=f\bigl(x,z^*(\lambda^*)\bigr)-\lambda^*g\bigl(x,z^*(\lambda^*)\bigr).
\]
Thus, when \(g>0\), \(d^*(x)=g(x,z^*(\lambda^*))\{m(x,z^*(\lambda^*))-\lambda^*\}\). This representation is used below to characterize the sign and shape of \(d^*\), and computationally it means that both Lagrangian index policies are evaluated from the PCL objects \(f\), \(g\), and \(z^*(\lambda^*)\), without solving another belief-state dynamic program.
\end{remark}
}

The PCL-indexability conditions in \citet{nmmor20} are:
\begin{enumerate}[
  label=(PCLI\arabic*),
  widest=3,
  leftmargin=*,
  align=left,
  labelsep=0.6em,
  itemsep=2pt,
  topsep=2pt
]
\item The marginal work metric satisfies \(g(x,z)>0\) for all \(x\in[0,1]\) and \(z\in\overline{\mathbb R}\).
\item The MP index \(m(x)\) is nondecreasing and continuous in \(x\).
\item For each \(x\) and finite \(z_1<z_2\), the reward and work metrics are related by
\[
F(x,z_2)-F(x,z_1)=\int_{(z_1,z_2]} m(z)\,\mathrm{d}_z G(x,z).
\]
\end{enumerate}

\rev{ Here \(\mathrm{d}_z G(x,z)\) denotes the \emph{Lebesgue--Stieltjes} (LS) differential generated by
\(z\mapsto G(x,z)\), with \(x\) fixed; in this model the above LS integral (see \cite{cartvanBrunt00}) reduces to a sum
over threshold breakpoints.
The intuition behind \textup{(PCLI3)} is as follows. For fixed initial belief \(x\),
increasing the threshold \(z\) makes the threshold policy passive more often.
Therefore the threshold reward and work metrics can change only when the threshold
crosses a belief level that can affect the subsequent threshold trajectory. In the
present deterministic model, the possible jump points are contained in the countable
reachable set \(\mathcal D(x)\) of Proposition~\ref{pro:reachable-set}. Consequently,
as shown below in Proposition~\ref{pro:edrwm} and illustrated in
Figure~\ref{fig:FG3x}, \(F(x,\cdot)\) and \(G(x,\cdot)\) are right-continuous
step functions of the threshold.

For \(z\in\mathcal D(x)\), write the corresponding \emph{jumps} as
\[
\Delta_z F(x)\triangleq F(x,z)-F(x,z^-),\qquad
\Delta_z G(x)\triangleq G(x,z)-G(x,z^-),
\]
where \(F(x,z^-)\) and \(G(x,z^-)\) denote the corresponding left limits.
Thus, in this model, the LS identity \textup{(PCLI3)} reduces to a countable
sum over the threshold breakpoints:
\[
F(x,z_2)-F(x,z_1)
=
\sum_{z\in\mathcal D(x)\cap(z_1,z_2]}
m(z)\,\Delta_z G(x),
\]
where terms with zero jump contribute nothing. Hence, at every breakpoint $z$ with $\Delta_z G(x) < 0$,
\textup{(PCLI3)} says that \(\Delta_z F(x)=m(z)\Delta_z G(x)\), or, equivalently, 
\[
m(z)=
\frac{\Delta_z F(x)}{\Delta_z G(x)}
=
\frac{F(x,z^-)-F(x,z)}{G(x,z^-)-G(x,z)}.
\]
The last expression gives the MP index at the belief level $z$ as the discounted
reward lost per unit of discounted work saved when the threshold is raised through
that level. In this sense, \(m\) is the \emph{Radon--Nikodym derivative} of
the reward-induced LS measure with respect to the work-induced LS measure along the
threshold family; see \cite[Ch.~X]{doob94} and \cite{nmmor20}.
}

\begin{theorem}[\citet{nmmor20}]
\label{the:pcli}
Under \textup{(PCLI1--PCLI3)}, the model is threshold-indexable. Moreover, the Whittle index equals the MP index, \(w(x)=m(x)\), and the optimal threshold maps are the generalized inverses of \(m\).
\end{theorem}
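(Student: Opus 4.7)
The plan is to leverage the structure of (PCLI1)--(PCLI3) to identify the candidate optimal threshold in closed form and then verify its global optimality. Starting from (PCLI3) and subtracting $\lambda\bigl(G(x,z_2)-G(x,z_1)\bigr)=\lambda\int_{(z_1,z_2]}G(x,dz)$, one obtains the key identity
\[
\mathcal{L}(x,z_2;\lambda) - \mathcal{L}(x,z_1;\lambda) \;=\; \int_{(z_1,z_2]} \bigl(m(z)-\lambda\bigr)\,G(x,dz),\qquad z_1<z_2.
\]
By (PCLI2), $m(z)-\lambda$ is continuous and nondecreasing in $z$, so it changes sign exactly at $z^*(\lambda)\triangleq\inf\{z:m(z)\geqslant\lambda\}$. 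Combined with a careful reading of the Lebesgue--Stieltjes integrator $G(x,\cdot)$---noting that raising the threshold reduces total expected work, so the orientation of $G(x,\cdot)$ as a signed measure is pinned down by (PCLI1)---the identity implies $z\mapsto\mathcal{L}(x,z;\lambda)$ is maximized precisely at $z=z^*(\lambda)$. This already identifies the candidate Whittle index $w(x)=m(x)$ and the candidate optimal threshold map as any generalized inverse of $m$.

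The harder step is to upgrade optimality from threshold policies to the full policy class $\Pi$. Two routes suggest themselves. A Bellman route would verify that $x\mapsto\mathcal{L}(x,z^*(\lambda);\lambda)$ solves the dynamic-programming optimality equation for \eqref{eq:lppg}: the single-step deviation gain in either action can be rewritten via the integral identity and forced to have the correct sign by (PCLI2). A more intrinsic PCL route would use (PCLI1)--(PCLI3) to deduce that the achievable performance region $\{(F(x,\pi),G(x,\pi)):\pi\in\Pi\}$ admits a parametric Pareto frontier traced out by threshold policies, so that the linear functional $F-\lambda G$ attains its maximum on that frontier at precisely the threshold whose local marginal trade-off equals $\lambda$; this is the partial-conservation-law content underlying the verification theorem.

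The main obstacle is exactly this last extension: knowing the best \emph{threshold} policy is the easier part, whereas ruling out gains from history-dependent or randomized deviations is where the structural PCL machinery does its work. Once that is in hand, the remaining claims follow in one stroke: for any $x\in[0,1]$ and $\lambda\in\mathbb{R}$, the active action is optimal at $x$ under price $\lambda$ iff $x>z^*(\lambda)$, which by (PCLI2) and the definition of the generalized inverse is equivalent to $m(x)\geqslant\lambda$; passivity is optimal iff $m(x)\leqslant\lambda$; hence $w(x)=m(x)$ and the optimal threshold maps are exactly the generalized inverses of $m(\cdot)$, as stated. Since the theorem is quoted verbatim from \cite{nmmor20}, in practice one would simply invoke it, but the sketch above clarifies which PCL condition is responsible for each piece of the conclusion.
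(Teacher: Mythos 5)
The paper does not prove this theorem---it is cited verbatim from \citet{nmmor20} and invoked as a black box---so there is no in-paper proof to compare against. Your sketch is a reasonable reconstruction of the underlying argument and you are right to flag that, in practice, one simply cites the result. A few points are worth tightening.

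Your derivation of the identity
$\mathcal{L}(x,z_2;\lambda)-\mathcal{L}(x,z_1;\lambda)=\int_{(z_1,z_2]}\bigl(m(z)-\lambda\bigr)\,G(x,dz)$
from (PCLI3) is correct, and (PCLI2) indeed controls where the integrand $m(z)-\lambda$ changes sign. However, your treatment of the measure $G(x,dz)$ is imprecise: that $G(x,\cdot)$ is nonincreasing in $z$ (so $G(x,dz)$ is a nonpositive measure and the stated sign pattern makes $z^*(\lambda)$ the maximizer) is not an axiom and does not follow directly from (PCLI1) as you assert. In the PCL framework of \citet{nmmor20} this is a \emph{derived} fact: the increments of $G(x,\cdot)$ are expressed in terms of the marginal-work metric $g$ via a decomposition lemma, and only through that lemma does $g>0$ imply the nonpositivity of $G(x,dz)$. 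You state the intuition ("raising the threshold reduces total expected work") and then attribute it to (PCLI1), which conflates an intuitive observation with a nontrivial structural deduction. Also, where $m$ is flat at level $\lambda$, the maximizer is not a single point $z^*(\lambda)=\inf\{z:m(z)\geqslant\lambda\}$ but the closure of $m^{-1}(\{\lambda\})$; this is precisely why the theorem speaks of \emph{generalized} inverses.

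Your identification of the main gap---upgrading from "best threshold policy" to "optimal over all of $\Pi$"---is correct and honestly flagged. Neither of your two proposed routes (Bellman verification; Pareto frontier of the achievable $(F,G)$ region) is carried out, and this is the part where the actual PCL machinery (the LP/achievable-region formulation, the conservation-law reformulation, and the polyhedral argument that threshold policies are extreme points of the relevant polytope) does all the work. As a description of which condition is responsible for which piece of the conclusion, your paragraph is serviceable; as a proof it is incomplete, but you say as much yourself.
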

Here the model is the single-patient \(\lambda\)-price problem. A \emph{generalized inverse} of the nondecreasing function \(m\) maps each price \(\lambda\in[m(0),m(1)]\) to the closure of \(m^{-1}(\{\lambda\})\); see \cite[p.~413]{falkTeschl12}. Once Sections~\ref{s:ewrm}--\ref{s:pcli3} verify \textup{(PCLI1--PCLI3)}, no value-function proof of threshold optimality or threshold monotonicity is needed.

\rev{
The Lagrangian benchmark policies introduced above rank states by \(d^*\),
whereas Whittle's policy ranks by the MP index \(m\). The next corollary
shows that, at the dual price, \(d^*\) has the same activation sign as the
MP index relative to the cutoff \(\lambda^*\). We use the following endpoint
convention: \(z^*(\lambda)\) is a generalized inverse of \(m\) for
\(0\le\lambda\leqslant m(1)\), extended by \(z^*(\lambda)=1\) for
\(\lambda>m(1)\), so prices above the largest index select the all-passive
threshold.

\begin{corollary}[Sign characterization of the Lagrangian index]
\label{cor:lagindex-sign}
Assume that \(\lambda^*\geqslant 0\) attains \eqref{eq:dualbound}. Under
\textup{(PCLI1--PCLI3)},
\[
\operatorname{sgn}(d^*(x))=\operatorname{sgn}(m(x)-\lambda^*).
\]
Thus \(d^*(x)\le0\) when \(m(x)\le\lambda^*\) and \(d^*(x)\geqslant 0\) when
\(m(x)\geqslant \lambda^*\); if \(\lambda^*\leqslant m(1)\), the sign is equivalently
determined by the threshold \(z^*(\lambda^*)\).
\end{corollary}

\begin{proof}
For \(0\le\lambda^*\leqslant m(1)\), Theorem~\ref{the:pcli} and
\cite[Lemma~24]{nmmor20} give
\(
\operatorname{sgn}(m(x,z^*(\lambda^*))-\lambda^*)=
\operatorname{sgn}(m(x)-\lambda^*)
\).
The result follows from Remark~\ref{rem:lagindex-metrics} and
\textup{(PCLI1)}. If \(\lambda^*>m(1)\), then \(z^*(\lambda^*)=1\),
Proposition~\ref{pro:dmrwm}\textup{(d)} gives
\(d^*(x)=rx/(1-\beta\rho)-\lambda^*<0\), and \(m(x)-\lambda^*<0\) for all \(x\).
\end{proof}

\begin{remark}[Policy implication]
Corollary~\ref{cor:lagindex-sign} explains why the standard Lagrangian index rule naturally leaves capacity unused when all \(d^*(x)\)'s are negative, whereas the forced-capacity version keeps the \(d^*\)-ranking but always selects the largest \(M\) values. The corollary only characterizes the sign of \(d^*\); its continuity, monotonicity, and piecewise-affine form are established later in Proposition~\ref{pro:lagindex-shape}, after Proposition~\ref{pro:zstar} gives the explicit threshold map.
\end{remark}
}

\section{Verification of the PCL-indexability conditions}
\label{s:avpclic}
We next verify conditions \textup{(PCLI1--PCLI3)} for the single-patient model. Fix $r>0$, $\beta, p \in(0,1)$, 
$q\in(0,1-p)$, 
$
\rho\triangleq 1-p-q\in(0,1)$, and
$h(x)\triangleq p+\rho x.
$

\subsection{Discounted reward and work metrics for threshold policies}
\label{s:ewrm}
We start by evaluating the reward and work metrics $F(x,z)$ and $G(x,z)$ under
the $z$-threshold policy, which are characterized by the functional equations
\begin{subequations}\label{eq:FGxz}
\begin{align}
F(x,z) &=
\begin{cases}
r+\beta\,F(p,z), & x>z,\\
r(1-x)+\beta\,F\!\bigl(h(x),z\bigr), & x\leqslant z,
\end{cases}
\label{eq:Fxz}\\[2mm]
G(x,z) &=
\begin{cases}
1+\beta\,G(p,z), & x>z,\\
\beta\,G\!\bigl(h(x),z\bigr), & x\leqslant z.
\end{cases}
\label{eq:Gxz}
\end{align}
\end{subequations}

To formulate their solution we define next certain quantities, including the \emph{passive trajectory} starting from $x$, $h_t(x) \triangleq z_\infty+(x-z_\infty)\rho^t$, with
\[
z_\infty\triangleq \frac{p}{1-\rho}=\frac{p}{p+q},
\quad
\textup{which solves} \quad
h_{t+1}(x)=h(h_t(x)),\ \ h_0(x)=x,
\]
and, for $z<z_\infty$, the corresponding \emph{first threshold-upcrossing time} from $x$, 
\[
\tau(x,z)\triangleq \min\{t\geqslant 0\colon h_t(x)>z\}.
\]

Noting that, for $x\leqslant z$, $h_t(x)>z \Longleftrightarrow \rho^{t}<\dfrac{z_\infty-z}{z_\infty-x}$, yields the
formulas
\begin{equation}
\label{eq:tauxz}
\tau(x,z)
\;=\;
\left\lfloor
\frac{\ln\!\bigl(\frac{z_\infty-z}{z_\infty-x}\bigr)}{\ln \rho}
\right\rfloor+1,
\quad x\leqslant z<z_\infty,
\qquad
\tau(x,z)=0,\quad x>z.
\end{equation}
Further, since $h_t(x)$ is an increasing sequence converging to $z_\infty$ as $t \to \infty$,
\begin{equation}
\label{eq:tau-characterization}
\tau(x,z)=t
\iff
h_{t-1}(x)\leqslant z < h_t(x), \quad x\leqslant z <z_\infty.
\end{equation}

We will further need the threshold breakpoints
\[
z_{-1}=0,\quad z_t\triangleq h_t(p)=h_{t+1}(0),\ t=0,1,\ldots,
\quad z_t\nearrow z_\infty,
\]
so that, for $p\leqslant z<z_\infty$, $t=\tau(p, z)$ is characterized by
$z_{t-1}\leqslant z<z_t$.

Finally, define the discounted passive sums
\begin{equation}
\label{eq:Phi-t-p-sum}
\Phi_t(x)\triangleq \sum_{s=0}^{t-1}(1-h_s(x))\beta^s, \quad
\Phi_\infty(x)\triangleq \sum_{s=0}^{\infty}(1-h_s(x))\beta^s.
\end{equation}
Note that $F(x, 1) = r \Phi_\infty(x)$ is the reward metric under the always-passive policy. 
They admit the
closed forms
\[
\Phi_t(x)=
\frac{1-\beta^t}{1-\beta}(1-z_\infty)
+\frac{1-(\beta\rho)^t}{1-\beta\rho}(z_\infty-x),
\quad
\Phi_\infty(x)=
\frac{1-z_\infty}{1-\beta}+\frac{z_\infty-x}{1-\beta\rho}.
\]

The next result gives closed-form expressions for $F(x,z)$ and $G(x,z)$ and highlights their staircase
structure in $z$ (piecewise constant for fixed $x$).

\begin{proposition}\label{pro:edrwm}
$F(x, z)$ and $G(x, z)$ are given by:
\begin{enumerate}[label=(\alph*)]
\item If $z<p$, then
\[
F(x,z)=
\begin{cases}
r\bigl(\frac{1}{1-\beta}-x\bigr), & x\leqslant z,\\
\frac{r}{1-\beta}, & x>z,
\end{cases}
\quad
G(x,z)=
\begin{cases}
\frac{\beta}{1-\beta}, & x\leqslant z,\\
\frac{1}{1-\beta}, & x>z.
\end{cases}
\]
\item If $p\leqslant z<z_\infty$ and $t=\tau(p, z)$, i.e., 
$z_{t-1}\leqslant z<z_t$, then
\[
F(x,z)=
\begin{cases}
r\Phi_s(x)+\beta^s\bigl[r+\beta F(p,z)\bigr],
& x\leqslant z,\ h_{s-1}(x)\leqslant z<h_s(x),\\
r+\beta F(p,z), & x>z,
\end{cases}
\]
\[
G(x,z)=
\begin{cases}
\beta^s\bigl[1+\beta G(p,z)\bigr],
& x\leqslant z,\ h_{s-1}(x)\leqslant z<h_s(x),\\
1+\beta G(p,z), & x>z,
\end{cases}
\]
where
$
F(p,z)=r\,\frac{\Phi_t(p)+\beta^t}{1-\beta^{t+1}},
\quad
G(p,z)=\frac{\beta^t}{1-\beta^{t+1}}.
$
\item If $z_\infty\leqslant z<1$, then
\[
F(x,z)=
\begin{cases}
r\Phi_\infty(x), & x\leqslant z,\\
r\bigl[1+\beta\Phi_\infty(p)\bigr], & x>z,
\end{cases}
\quad
G(x,z)=
\begin{cases}
0, & x\leqslant z,\\
1, & x>z.
\end{cases}
\]
\item If $z\geqslant 1$, then $F(x,z)=r\Phi_\infty(x)$ and $G(x,z)=0$ for all $x$.
\end{enumerate}
\end{proposition}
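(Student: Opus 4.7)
The approach is a case analysis driven by the position of $z$ relative to the key breakpoints $p$, $z_{-1}=0<z_0<z_1<\cdots\nearrow z_\infty$, and $1$. In every case I would unfold the recursions \eqref{eq:FGxz} along the deterministic passive trajectory $h_s(x)$ until it either (i) reaches a cell above $z$ (triggering the active action, earning $r$, and resetting the state to $p$), or (ii) remains $\leqslant z$ forever. After the first active epoch the process re-enters state $p$, so $F(p,z)$ and $G(p,z)$ play the role of ``anchor'' quantities through which all other values are expressed.

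Concretely, I would proceed in the following order. \emph{(i)}~Starting from $x>z$, the active case of \eqref{eq:FGxz} gives immediately $F(x,z)=r+\beta F(p,z)$ and $G(x,z)=1+\beta G(p,z)$, reducing the task to evaluating the anchors. \emph{(ii)}~For $x\leqslant z$, iterating the passive branch of \eqref{eq:FGxz} for $s=\tau(x,z)$ steps until $h_s(x)>z$ yields
\[
F(x,z)=r\,\Phi_s(x)+\beta^{s}\bigl[r+\beta F(p,z)\bigr],\qquad G(x,z)=\beta^{s}\bigl[1+\beta G(p,z)\bigr],
\]
by definition of $\Phi_s$ in \eqref{eq:Phi-t-p-sum}; the finiteness of $s$ uses the monotone convergence $h_s(x)\uparrow z_\infty$ together with $z<z_\infty$. \emph{(iii)}~To compute the anchors when $p\leqslant z<z_\infty$, I would use the cyclic structure of the trajectory starting at $p$: $t=\tau(p,z)$ passive steps at states $p=h_0(p),\ldots,h_{t-1}(p)$ followed by one active step at $h_t(p)=z_t>z$ and reset to $p$, giving a cycle of length $t+1$ with per-cycle reward $r\Phi_t(p)+\beta^{t}r$ and per-cycle work $\beta^{t}$; summing the geometric series in $\beta^{t+1}$ produces the stated $F(p,z)$ and $G(p,z)$. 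Substituting back into (i)--(ii) yields case (b).

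The boundary cases specialize this scheme. In (a), $z<p$ implies $h(x)\geqslant p>z$ for every $x\leqslant z$, so after one forced passive step we are permanently above $z$ and the process is active forever; then $F(p,z)=r/(1-\beta)$, $G(p,z)=1/(1-\beta)$, and a direct substitution (checked via $r(1-x)+\beta r/(1-\beta)=r(1/(1-\beta)-x)$) yields the formulae. In (c), $z_\infty\leqslant z<1$ forces $h_s(x)\leqslant\max(x,z_\infty)\leqslant z$ for all $s$ whenever $x\leqslant z$ (using that the trajectory is monotone and converges to $z_\infty$), so the always-passive branch applies and summation gives $r\Phi_\infty(x)$; for $x>z$, one active step resets to $p<z_\infty\leqslant z$, after which the process is always passive. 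Case (d) is immediate since $z\geqslant 1\geqslant x$ forces the passive branch in every period.

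The main obstacle I expect is purely bookkeeping: keeping the indices $s=\tau(x,z)$ and $t=\tau(p,z)$ straight (in particular, verifying the characterization \eqref{eq:tau-characterization} implies $z_{t-1}\leqslant z<z_t$ and $h_{s-1}(x)\leqslant z<h_s(x)$ align with the piecewise expressions), and checking consistency of the formula for $F(x,z)$ at the transition $s=t$ when $x=p$. Beyond that, the derivation is a mechanical substitution using the closed form of $\Phi_t$ to collapse the geometric sums.
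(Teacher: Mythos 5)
Your proposal is correct and follows essentially the same approach as the paper's proof: case analysis on the position of $z$, unfolding the dynamic-programming recursions \eqref{eq:FGxz} along the deterministic passive trajectory until the first threshold crossing at $\tau(x,z)$, and anchoring all values through $F(p,z)$ and $G(p,z)$. The only cosmetic difference is in case (b): where you describe the anchors via the regenerative-cycle geometric series $\sum_k (\beta^{t+1})^k\bigl(r\Phi_t(p)+r\beta^t\bigr)$, the paper substitutes $x=p$ into the general $x\leqslant z$ formula and solves the resulting linear fixed-point equation $F(p,z)=r\Phi_t(p)+\beta^t\bigl[r+\beta F(p,z)\bigr]$---algebraically the same computation.
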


The proof of Proposition~\ref{pro:edrwm} is in Appendix~A.1.
Figure~\ref{fig:FG3x} illustrates the staircase dependence on the threshold $z$ for representative $x$ values in the regimes $x<p$, $p<x<z_\infty$, and $x>z_\infty$, for $p=0.3$, $q=0.2$, $r=1$, $\beta=0.95$ (used again later).
Both $F(x,z)$ and $G(x,z)$ are right-continuous, piecewise constant in $z$, with infinitely many steps accumulating at $z_\infty$.

\begin{figure}[!htbp]
  \centering
  \includegraphics[width=1\textwidth]{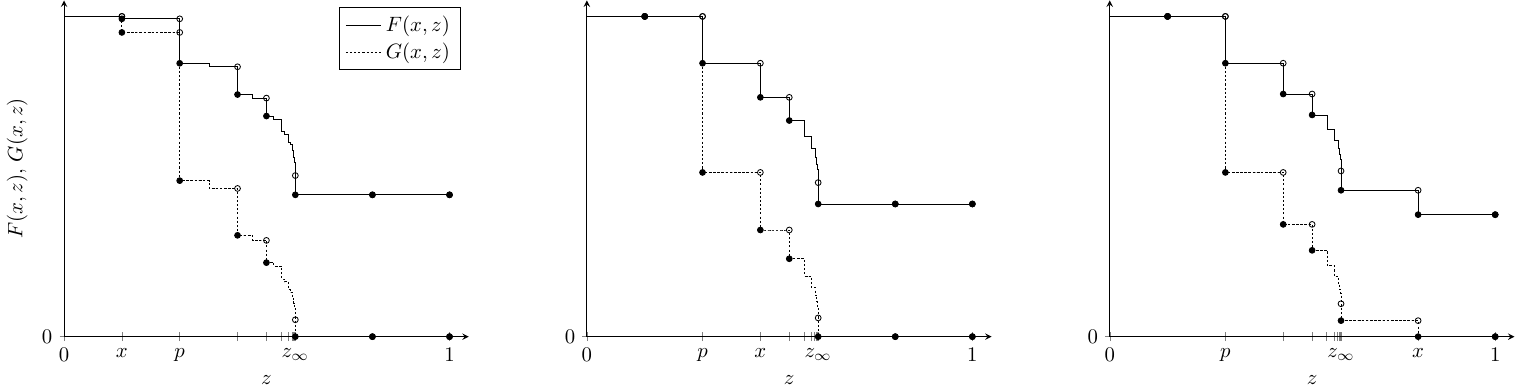}%
  \caption{Reward and work metrics
    as functions of the threshold $z$ for fixed belief $x$.}
  \label{fig:FG3x}
\end{figure}

\subsection{Discounted marginal metrics and condition \textup{(PCLI1)}}
\label{s:edmrwm}
We proceed by evaluating the marginal reward and work metrics $f(x,z)$ and $g(x,z)$ defined in Section~\ref{s:pclbvtri};
as with $F$ and $G$, for fixed $x$ these marginal metrics exhibit a staircase dependence on the
threshold $z$.

Throughout this subsection, we use the shorthand
\[
K_F(z)\triangleq r+\beta F(p,z),
\qquad
K_G(z)\triangleq 1+\beta G(p,z),
\]
where \(F(p,z)\) and \(G(p,z)\) are the  reward and work metrics
given in Proposition~\ref{pro:edrwm}.

\begin{proposition}\label{pro:dmrwm}
$f(x, z)$ and $g(x, z)$ are given by:

\begin{enumerate}[label=(\alph*)]
\item If $z<p$, then for all $x$,
$
f(x,z)=r x,
\quad
g(x,z)=1.
$

\item If $p\leqslant z<z_\infty$, let $t=\tau(p,z)$, i.e., $z_{t-1}\leqslant z<z_t$. Then:
\begin{itemize}[leftmargin=*]
\item If $x\leqslant z$ and $s=\tau(x,z)$, 
\[
f(x,z)=\bigl(1-\beta^s\bigr)K_F(z)-r\,\Phi_s(x),
\quad
g(x,z)=\bigl(1-\beta^s\bigr)K_G(z).
\]
\item If $x>z$, 
\[
f(x,z)= (1-\beta)K_F(z) - r(1-x),
\quad
g(x,z)=\bigl(1-\beta\bigr)K_G(z).
\]
\end{itemize}

\item If $z_\infty\leqslant z<1$, then $F(p,z)=r\,\Phi_\infty(p)$ and $G(p,z)=0$.
Define
\[
x^*(z)\triangleq h^{-1}(z)=\frac{z-p}{\rho}.
\]
\begin{itemize}[leftmargin=*]
\item If $x>z$ and $x> x^*(z)$ (equivalently $h(x)> z$), then
\[
f(x,z)=r(x-1)+(1-\beta)\,K_F(z),
\quad
g(x,z)=1-\beta.
\]
\item Otherwise, $f(x,z)=\dfrac{r\,x}{1-\beta\rho}$ \quad and \quad $g(x,z)=1$.
\end{itemize}

\item If $z\geqslant 1$, then 
$f(x,z)=\dfrac{r\,x}{1-\beta\rho}$ \quad and \quad $g(x,z)=1$.
\end{enumerate}
\end{proposition}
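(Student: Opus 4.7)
The plan is to expand $f$ and $g$ directly from their definitions by conditioning on the action at time zero. Since $\langle 1,z\rangle$ takes the active action at time $0$ (reward $r$, one unit of work, next belief $p$) and $\langle 0,z\rangle$ takes the passive action (reward $r(1-x)$, no work, next belief $h(x)$), and both thereafter follow the $z$-threshold rule,
\[
f(x,z)=rx+\beta\bigl[F(p,z)-F(h(x),z)\bigr],\qquad g(x,z)=1+\beta\bigl[G(p,z)-G(h(x),z)\bigr].
\]
Thus the whole argument reduces to locating the two beliefs $p$ and $h(x)$ relative to the threshold $z$ in each regime of Proposition~\ref{pro:edrwm} and substituting.

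For case (a), $z<p$ implies $p,h(x)>z$, so both $F$-values equal $r/(1-\beta)$ and both $G$-values equal $1/(1-\beta)$; the differences vanish, giving $f(x,z)=rx$, $g(x,z)=1$. For case (b), $p\leqslant z<z_\infty$, I would separate $x\leqslant z$ from $x>z$. In the former it is convenient to observe that the policy $\langle 0,z\rangle$ coincides with the $z$-threshold policy at every step (since both are passive at $x\leqslant z$), so $F(x,\langle 0,z\rangle)=F(x,z)$ and $G(x,\langle 0,z\rangle)=G(x,z)$; then $f(x,z)=K_F(z)-F(x,z)=(1-\beta^s)K_F(z)-r\Phi_s(x)$ and analogously for $g$, by Proposition~\ref{pro:edrwm}(b) with $s=\tau(x,z)$. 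The index shift $\tau(h(x),z)=\tau(x,z)-1$ together with the elementary recursion $\Phi_s(x)=(1-x)+\beta\Phi_{s-1}(h(x))$ make the two routes (via $F(h(x),z)$ or via $F(x,z)$) consistent. For $x>z$, note that $z<z_\infty$ forces $h(z)>z$, hence by monotonicity $h(x)>z$ as well, so $F(h(x),z)=K_F(z)$ and $G(h(x),z)=K_G(z)$; substitution yields $f=(1-\beta)K_F(z)-r(1-x)$ and $g=(1-\beta)K_G(z)$.

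Case (c), $z_\infty\leqslant z<1$, is the most delicate. Since $p<z_\infty\leqslant z$ one has $F(p,z)=r\Phi_\infty(p)$ and $G(p,z)=0$. The branch of Proposition~\ref{pro:edrwm}(c) used for $F(h(x),z)$ and $G(h(x),z)$ depends on whether $h(x)>z$, i.e., whether $x>x^*(z)=(z-p)/\rho$. When $h(x)>z$, direct substitution gives $f(x,z)=r(x-1)+(1-\beta)K_F(z)$ and $g(x,z)=1-\beta$. When $h(x)\leqslant z$, I would combine the telescoping identity $\Phi_\infty(x)=(1-x)+\beta\Phi_\infty(h(x))$ with the closed-form difference $\Phi_\infty(p)-\Phi_\infty(h(x))=(h(x)-p)/(1-\beta\rho)=\rho x/(1-\beta\rho)$ to collapse the expression to $f(x,z)=rx/(1-\beta\rho)$; and $g(x,z)=1$ holds trivially since both $G$-values vanish. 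Case (d) is identical to this second subcase: $z\geqslant1$ places both $p$ and $h(x)$ strictly below the threshold, so the same telescoping delivers $f=rx/(1-\beta\rho)$, $g=1$.

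The main obstacle will be the bookkeeping in case (b): keeping the index shift $s\leftrightarrow s-1$ straight between $\tau(x,z)$ and $\tau(h(x),z)$, and exploiting the reduction $F(x,\langle 0,z\rangle)=F(x,z)$ for $x\leqslant z$ to avoid rewriting the $\Phi$-sums twice. In case (c), the hidden step is recognizing that the $h(x)\leqslant z$ branch must telescope to the same expression as case (d); once the identity $\Phi_\infty(x)=(1-x)+\beta\Phi_\infty(h(x))$ is used, all four subcases collapse into one-line substitutions.
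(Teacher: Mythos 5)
Your proof is correct and takes essentially the same route as the paper's: both begin from the one-step-deviation identity $f(x,z)=rx+\beta\bigl(F(p,z)-F(h(x),z)\bigr)$, $g(x,z)=1+\beta\bigl(G(p,z)-G(h(x),z)\bigr)$, and classify cases by where $p$ and $h(x)$ sit relative to $z$ using Proposition~\ref{pro:edrwm}. Your observation that $F(x,\langle 0,z\rangle)=F(x,z)$ when $x\leqslant z$ is a small bookkeeping shortcut in case~(b); the paper's appendix instead evaluates $F(h(x),z)$ via the shifted crossing time $\tau(h(x),z)=\tau(x,z)-1$, but the two routes are the same argument seen from either side.
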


The proof of Proposition~\ref{pro:dmrwm} is in Appendix~A.2.
Figure~\ref{fig:margfg3x} plots the marginal reward and work metrics versus $z$ for fixed $x$, in the regimes
$x<p$, $p<x<z_\infty$, and $x>z_\infty$.
Both $f(x,z)$ and $g(x,z)$ are right-continuous, piecewise-constant in $z$, with infinitely many steps accumulating at $z_\infty$.

\begin{figure}[!htbp]
  \centering
  \includegraphics[width=1\textwidth]{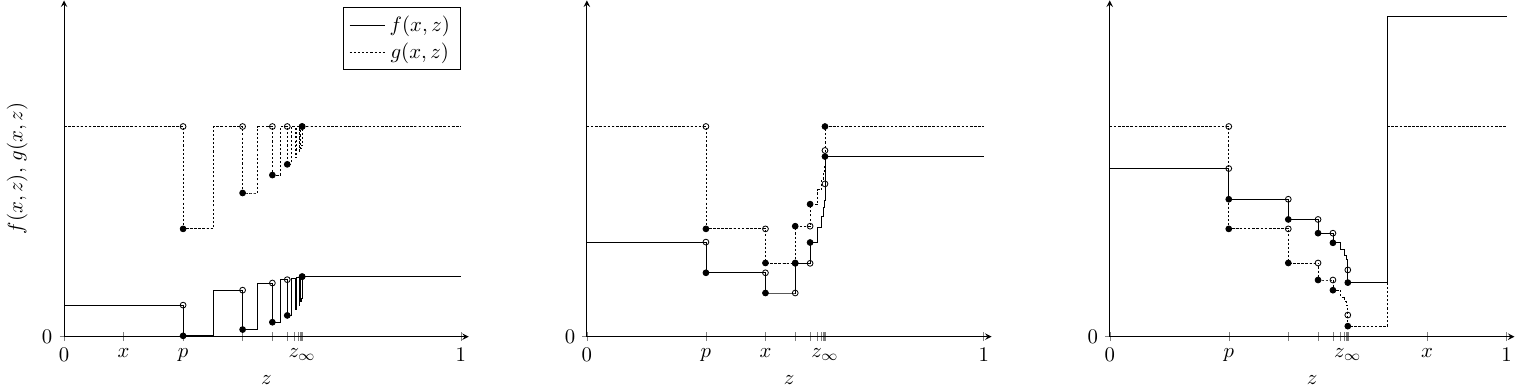}%
  \caption{Marginal reward and work metrics
    as functions of the threshold $z$ for fixed $x$.}
  \label{fig:margfg3x}
\end{figure}

The figure suggests that $g(x,z)$ is bounded away from zero, consistently with condition \textup{(PCLI1)}.
The next result ensures that \textup{(PCLI1)} indeed holds.

\begin{lemma}[PCLI1]
\label{lma:pcli1}
For all $x$ and $z$, $g(x,z)\geqslant 1-\beta$. Hence \textup{(PCLI1)} holds.
\end{lemma}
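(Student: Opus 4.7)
The bound follows directly from the closed-form expressions for $g(x,z)$ in Proposition~\ref{pro:dmrwm}, and amounts to a case analysis matching its four regimes. I would first dispatch the trivial cases (a), (c)--otherwise branch, and (d), where $g(x,z)=1\geqslant 1-\beta$; the ``$x>z$, $h(x)>z$'' subcase of (c), where $g(x,z)=1-\beta$ exactly (this is tight and will be the witness for sharpness, a useful sanity check); and the ``$x>z$'' subcase of (b), where $g(x,z)=(1-\beta)K_G(z)\geqslant 1-\beta$ because $K_G(z)=1+\beta G(p,z)\geqslant 1$.

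The only non-obvious case is (b) with $x\leqslant z$. Here I would substitute the expression $G(p,z)=\beta^t/(1-\beta^{t+1})$ from Proposition~\ref{pro:edrwm}(b) into $K_G(z)$, which simplifies nicely:
\[
K_G(z)\;=\;1+\beta G(p,z)\;=\;1+\frac{\beta^{t+1}}{1-\beta^{t+1}}\;=\;\frac{1}{1-\beta^{t+1}},
\]
so that
\[
g(x,z)\;=\;(1-\beta^{s})\,K_G(z)\;=\;\frac{1-\beta^{s}}{1-\beta^{t+1}},
\qquad s=\tau(x,z),\ \ t=\tau(p,z).
\]

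Now the key observation is that $s\geqslant 1$: since $x\leqslant z$ we have $h_0(x)=x\leqslant z$, so by the definition of $\tau$, the first threshold-crossing time $s$ is at least $1$. Therefore $1-\beta^{s}\geqslant 1-\beta$. The denominator satisfies $0<1-\beta^{t+1}<1$, hence
\[
g(x,z)\;\geqslant\;\frac{1-\beta}{1-\beta^{t+1}}\;\geqslant\;1-\beta.
\]
Combining the cases yields the uniform lower bound $g(x,z)\geqslant 1-\beta>0$, which is (PCLI1).

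\textbf{Main obstacle.} There is no real obstacle here beyond bookkeeping: the argument is essentially a verification that the closed forms of Proposition~\ref{pro:dmrwm} respect a universal lower bound. The one step that deserves care is the simplification of $K_G(z)$, and the observation that $s\geqslant 1$ when $x\leqslant z$; both are immediate from the definitions. It is also worth remarking that the bound is sharp, attained in case (c) on the branch $x>z$, $h(x)>z$ (i.e., a single intervention followed by perpetual passivity), which confirms the bound cannot be improved uniformly.
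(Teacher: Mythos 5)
Your proof is correct, and the overall strategy---case analysis over the regimes of Proposition~\ref{pro:dmrwm}---is the same one the paper uses. You are in fact somewhat more careful than the paper's proof, which begins by asserting that $g(x,z)=1$ whenever $x>z$ (its ``case (ii)''); that assertion is not literally true: in the regime $p\leqslant z<z_\infty$ with $x>z$, Proposition~\ref{pro:dmrwm}(b) gives $g(x,z)=(1-\beta)K_G(z)=(1-\beta)/(1-\beta^{t+1})<1$ for $t\geqslant 1$, and in the regime $z_\infty\leqslant z<1$ with $x>z$, $h(x)>z$, one has $g(x,z)=1-\beta$ exactly. You handle both of those subcases explicitly and correctly, and also observe that the second one witnesses sharpness of the bound. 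For the remaining nontrivial case ($p\leqslant z<z_\infty$, $x\leqslant z$) your argument is equivalent to the paper's, with the added cosmetic simplification $K_G(z)=1/(1-\beta^{t+1})$; the paper instead uses only $K_G(z)\geqslant 1$ (via $G(p,z)\geqslant 0$), which is all that is needed. In short, your proof is correct, slightly more explicit, and repairs a small imprecision in the paper's enumeration of cases.
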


\begin{proof}
By Proposition~\ref{pro:dmrwm}, if $z<p$ or $z\geqslant1$, then $g(x,z)=1$. If $z_\infty\leqslant z<1$, then $g(x,z)$ is either $1-\beta$ or $1$. If $p\leqslant z<z_\infty$ and $x>z$, then $g(x,z)=(1-\beta)K_G(z)\ge1-\beta$. It remains to consider $p\leqslant z<z_\infty$ and $x\leqslant z$, where
$g(x,z)=(1-\beta^{s})(1+\beta G(p,z))$ for $s=\tau(x,z)\geqslant 1$.
Since $s\geqslant 1$ we have $\beta^{s}\leqslant \beta$ and thus $1-\beta^{s}\geqslant 1-\beta$.
Moreover, $G(p,z)\geqslant 0$, so $1+\beta G(p,z)\geqslant 1$. Therefore,
\[
g(x,z)=(1-\beta^{s})(1+\beta G(p,z))\;\geqslant\; 1-\beta.
\]
\end{proof}

\subsection{MP index and condition \textup{(PCLI2)}}
\label{s:pcli2}

Since (PCLI1) holds by Lemma~\ref{lma:pcli1}, we can define the MP metric $m(x,z)\triangleq f(x,z)/g(x,z)$ and the MP index
$m(x)\triangleq m(x,x)$. To evaluate the latter,
it suffices to give $m(x,z)$
for $0\leqslant x\leqslant z\leqslant 1$.

\begin{proposition}\label{pro:dmpm}
For $0\leqslant x\leqslant z\leqslant 1$, the MP metric is given by:
\begin{enumerate}[label=(\alph*)]
\item If $z<p$, then $m(x,z)=r x$.
\item If $p\leqslant z<z_\infty$ and $s=\tau(x,z)$, then, with $F(p,z),G(p,z)$ as in Proposition~\ref{pro:edrwm}\textup{(b)}, 
\[
m(x,z)=
\frac{(1-\beta^{s})(r+\beta F(p,z))-r\Phi_s(x)}
     {(1-\beta^{s})(1+\beta G(p,z))}.
\]
\item If $z_\infty\leqslant z\leqslant 1$, then
$m(x,z)=r\bigl[1+\beta\Phi_\infty(p)-\Phi_\infty(x)\bigr]$.
\end{enumerate}
\end{proposition}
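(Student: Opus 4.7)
The plan is to observe that this proposition follows by direct substitution from the closed-form marginal metrics already obtained in Proposition~\ref{pro:dmrwm}, using Lemma~\ref{lma:pcli1} to guarantee that $m(x,z)=f(x,z)/g(x,z)$ is well-defined. Because the statement restricts attention to $0\leqslant x\leqslant z\leqslant 1$, in each regime only the ``$x\leqslant z$'' branch of Proposition~\ref{pro:dmrwm} is relevant; dividing the two expressions and simplifying yields the claim.

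For part~(a), Proposition~\ref{pro:dmrwm}(a) gives $f(x,z)=rx$ and $g(x,z)=1$ for every $x$ whenever $z<p$, so $m(x,z)=rx$ is immediate. For part~(b), the $x\leqslant z$ branch of Proposition~\ref{pro:dmrwm}(b) with $s=\tau(x,z)$ yields $f(x,z)=(1-\beta^{s})K_F(z)-r\,\Phi_s(x)$ and $g(x,z)=(1-\beta^{s})K_G(z)$, where $K_F(z)=r+\beta F(p,z)$ and $K_G(z)=1+\beta G(p,z)$; the ratio reproduces the stated formula verbatim, with no further manipulation.

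Part~(c) is the one step that requires genuine algebra. For $z_\infty\leqslant z<1$, Proposition~\ref{pro:dmrwm}(c) gives, in the ``otherwise'' branch (which covers all $x\leqslant z$), $f(x,z)=rx/(1-\beta\rho)$ and $g(x,z)=1$; the same holds for $z\geqslant 1$ by Proposition~\ref{pro:dmrwm}(d). Hence $m(x,z)=rx/(1-\beta\rho)$, and it remains to check the identity
\[
\frac{rx}{1-\beta\rho} \;=\; r\bigl[1+\beta\,\Phi_\infty(p)-\Phi_\infty(x)\bigr].
\]
I would substitute the closed form $\Phi_\infty(y)=(1-z_\infty)/(1-\beta)+(z_\infty-y)/(1-\beta\rho)$. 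The $(1-\beta)^{-1}$ contributions combine with the leading~$1$ to produce $r z_\infty$, so the right-hand side becomes $r z_\infty + r(\beta z_\infty-\beta p-z_\infty+x)/(1-\beta\rho)$. Placing everything over the common denominator $1-\beta\rho$, the numerator reads $r z_\infty[(1-\beta\rho)-(1-\beta)] - r\beta p + rx = r\beta z_\infty(1-\rho) - r\beta p + rx$; the identity $p=z_\infty(1-\rho)$ (equivalently $z_\infty=p/(p+q)$) then annihilates the first two terms, leaving $rx/(1-\beta\rho)$.

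The main obstacle is the algebraic identification in part~(c): the argument is conceptually trivial, but one has to juggle the two ``scales'' $(1-\beta)^{-1}$ and $(1-\beta\rho)^{-1}$ and invoke $p=z_\infty(1-\rho)$ at the right moment. The alternative form $r[1+\beta\Phi_\infty(p)-\Phi_\infty(x)]$ is presumably preferred by the authors because, in view of Proposition~\ref{pro:edrwm}(c), it displays the MP index as a transparent difference between the reward under immediate activation, $r+\beta F(p,z)=r[1+\beta\Phi_\infty(p)]$, and the reward under perpetual passivity, $r\Phi_\infty(x)$---a structural representation likely to be convenient in the subsequent verifications of (PCLI2) and (PCLI3).
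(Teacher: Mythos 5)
Your proposal is correct and follows the same route as the paper's one-line proof sketch: substitute the closed-form marginal metrics from Proposition~\ref{pro:dmrwm} into $m(x,z)=f(x,z)/g(x,z)$, with Lemma~\ref{lma:pcli1} guaranteeing the denominator is positive. The only place the sketch's "immediate" needs actual work is part~(c), and your algebraic verification that $rx/(1-\beta\rho)=r\bigl[1+\beta\Phi_\infty(p)-\Phi_\infty(x)\bigr]$ (using $z_\infty(1-\rho)=p$) is correct; as you yourself observe in your closing remark, the stated form also drops out directly by writing $f(x,z)=F(x,\langle1,z\rangle)-F(x,\langle0,z\rangle)=r\bigl[1+\beta\Phi_\infty(p)\bigr]-r\Phi_\infty(x)$ via Proposition~\ref{pro:edrwm}(c) with $g(x,z)=1$, bypassing the $rx/(1-\beta\rho)$ intermediate form.
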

\begin{proof}[Proof sketch]
Immediate from Proposition~\ref{pro:dmrwm} using $m(x,z)\triangleq f(x,z)/g(x,z)$
and the corresponding expressions for $f(x,z)$ and $g(x,z)$ in each regime.
\end{proof}

The next result specializes the MP metric to the diagonal $z=x$, yielding an explicit piecewise-affine
formula for the MP index.

\begin{proposition}\label{pro:dmpi-explicit}
The MP index $m(x)$ is the piecewise-affine function
\[
m(x)=
\begin{cases}
r x, & 0\leqslant x<p,\\[4pt]
\begin{aligned}
\displaystyle \frac{r}{1-\beta}\bigl[(1-\beta^{t+1})x
-\beta(1-\beta^{t}) +\beta(1-\beta)\Phi_t(p)\bigr],
\end{aligned}
& z_{t-1}\leqslant x<z_t,\ t \geqslant 1,\\[8pt]
\displaystyle \frac{r x}{1-\beta\rho}, & z_\infty\leqslant x\leqslant 1.
\end{cases}
\]
\end{proposition}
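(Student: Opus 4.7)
The plan is to evaluate $m(x,z)=f(x,z)/g(x,z)$ on the diagonal $z=x$ by specializing Proposition~\ref{pro:dmpm} and simplifying. Since the proposition furnishes $m(x,z)$ on three regimes of $z$, the analysis splits according to where $x$ lies relative to $p$ and $z_\infty$.

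First I would dispose of the easy outer cases. For $0\leqslant x<p$, I take $z=x$ in Proposition~\ref{pro:dmpm}(a) to obtain $m(x)=rx$ at once. For $z_\infty\leqslant x\leqslant 1$, I set $z=x$ in case (c), so $m(x)=r\bigl[1+\beta\Phi_\infty(p)-\Phi_\infty(x)\bigr]$, and expand both $\Phi_\infty$'s with the closed form given just before Proposition~\ref{pro:edrwm}. The $(1-z_\infty)/(1-\beta)$ term telescopes ($\beta-1$ in front of it cancels), and using $z_\infty=p/(1-\rho)$ together with $1-\beta\rho=(1-\rho)+\rho(1-\beta)$ the $p$-constants reduce to zero. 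What survives is $m(x)=rx/(1-\beta\rho)$, matching the third branch.

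The central case is $p\leqslant x<z_\infty$, which produces the nontrivial piecewise-affine formula. The crucial observation here is that, for $z=x$, the crossing time simplifies to $s\triangleq\tau(x,x)=1$: indeed $h_0(x)=x\not>x$ while $h_1(x)=p+\rho x>x$ iff $x(1-\rho)<p$, which is exactly $x<z_\infty$. Consequently $\Phi_s(x)=\Phi_1(x)=1-x$, and the denominator in Proposition~\ref{pro:dmpm}(b) collapses to $(1-\beta)(1+\beta G(p,x))$. I then plug in the closed forms $F(p,x)=r\,(\Phi_t(p)+\beta^t)/(1-\beta^{t+1})$ and $G(p,x)=\beta^t/(1-\beta^{t+1})$ from Proposition~\ref{pro:edrwm}(b), with $t=\tau(p,x)$ characterized by $z_{t-1}\leqslant x<z_t$. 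A brief simplification gives
\[
K_F(x)=r+\beta F(p,x)=r\,\frac{1+\beta\Phi_t(p)}{1-\beta^{t+1}},\qquad K_G(x)=1+\beta G(p,x)=\frac{1}{1-\beta^{t+1}},
\]
so that
\[
m(x)=\frac{(1-\beta)K_F(x)-r(1-x)}{(1-\beta)K_G(x)}
     =r(1+\beta\Phi_t(p))-\frac{r(1-x)(1-\beta^{t+1})}{1-\beta}.
\]
Expanding $-(1-x)(1-\beta^{t+1})=x(1-\beta^{t+1})-(1-\beta^{t+1})$ and regrouping the constants as $(1-\beta)-(1-\beta^{t+1})=-\beta(1-\beta^t)$ recovers the claimed expression.

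No step poses a genuine difficulty; the only mild subtlety I would flag for the reader is the identification $\tau(x,x)=1$ throughout the middle regime, since this is what decouples the $s$-index (which becomes trivial) from the $t$-index (which enumerates the staircase in $z=x$). The rest is just bookkeeping of $\beta$-powers.
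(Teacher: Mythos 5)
Your proof is correct and takes essentially the same route as the paper's proof sketch: specialize the MP metric of Proposition~\ref{pro:dmpm} to the diagonal $z=x$, note that $\tau(x,x)=1$ on the middle regime, substitute the closed forms of $F(p,x)$, $G(p,x)$ from Proposition~\ref{pro:edrwm}(b), and simplify. Your version supplies the algebraic details that the paper's sketch leaves as ``straightforward,'' and all the intermediate identities (in particular $K_F$, $K_G$, the regrouping $(1-\beta)-(1-\beta^{t+1})=-\beta(1-\beta^{t})$, and the cancellation yielding $rx/(1-\beta\rho)$) check out.
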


\begin{proof}[Proof sketch]
The formula is obtained by substituting \(z=x\) in Proposition~\ref{pro:dmpm}. The cases \(x<p\) and \(x\geqslant z_\infty\) give \(rx\) and \(rx/(1-\beta\rho)\) directly. For \(p\leqslant x<z_\infty\), let \(t=\tau(p,x)\), equivalently \(z_{t-1}\leqslant x<z_t\). Since \(\tau(x,x)=1\), Proposition~\ref{pro:dmpm}\textup{(b)} together with Proposition~\ref{pro:edrwm}\textup{(b)} yields the displayed affine expression after algebra.
\end{proof}

We next verify \textup{(PCLI2)}; in fact the MP index is strictly increasing.

\begin{lemma}[PCLI2]\label{lma:pcli2}
The MP index $m(x)$ is continuous and increasing in $x$, and hence
condition \textup{(PCLI2)} holds.
\end{lemma}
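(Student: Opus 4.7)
The starting point is the explicit formula for $m(x)$ in Proposition~\ref{pro:dmpi-explicit}, which describes $m$ as a piecewise-affine function on the partition $[0,p)\cup\bigcup_{t\geqslant 1}[z_{t-1},z_t)\cup[z_\infty,1]$. The plan is to verify (a) that each affine piece has a strictly positive slope, and (b) that $m$ is continuous at every breakpoint $p=z_0$, $z_t$ for $t\geqslant 1$, and $z_\infty$. Together these give strict monotonicity on $[0,1]$: continuity plus strict piecewise increase precludes any downward jump, and strict increase transfers across breakpoints.

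Step~1 (slopes are positive). On $[0,p)$ the slope is $r>0$; on $[z_\infty,1]$ it is $r/(1-\beta\rho)>0$; on each $[z_{t-1},z_t)$ it is $r(1-\beta^{t+1})/(1-\beta)>0$. Noting also that the slopes $r(1-\beta^{t+1})/(1-\beta)$ are strictly increasing in $t$ and that $r/(1-\beta\rho)<r/(1-\beta)$ gives a clean qualitative picture (kinks, but always positive slope).

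Step~2 (continuity at breakpoints). The main identity is the telescoping relation
\[
\Phi_{t+1}(p)-\Phi_t(p)=(1-h_t(p))\beta^t=(1-z_t)\beta^t,
\]
which follows directly from the definition of $\Phi_t$ in \eqref{eq:Phi-t-p-sum} and $z_t=h_t(p)$. Equating the closed-form expressions for the piece on $[z_{t-1},z_t)$ evaluated at $x=z_t$ with the piece on $[z_t,z_{t+1})$ evaluated at the same point, after dividing by $\beta(1-\beta)/r$ and simplifying, reduces to
\[
\beta^t z_t-\beta^t+\bigl[\Phi_{t+1}(p)-\Phi_t(p)\bigr]=0,
\]
which vanishes by the identity above. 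Continuity at $p=z_0$ follows analogously using $\Phi_1(p)=1-p$. Continuity at $z_\infty$ is obtained by passing $t\to\infty$ in the piecewise formula, using $z_t\to z_\infty$, $\beta^t\to 0$, and $\Phi_t(p)\to\Phi_\infty(p)$; after substituting the closed form for $\Phi_\infty(p)$ and invoking the defining identity $z_\infty(1-\rho)=p$, the limit collapses to $rz_\infty/(1-\beta\rho)$, matching the value from the right piece.

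Step~3 (assembly). Given continuity on $[0,1]$ and strict increase on each piece, for any $x_1<x_2$ one splits the interval $[x_1,x_2]$ at the (countably many, accumulating only at $z_\infty$) breakpoints it contains; strict increase on each subinterval and continuity at the interior breakpoints yield $m(x_1)<m(x_2)$, establishing that $m$ is continuous and strictly increasing, so \textup{(PCLI2)} holds.

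The only nontrivial obstacle is the algebraic verification of continuity at the interior breakpoints $z_t$; the $\Phi_{t+1}(p)-\Phi_t(p)=(1-z_t)\beta^t$ identity is what makes the computation collapse. The continuity at $z_\infty$ is a second, separate calculation that hinges on the fixed-point identity $z_\infty=p/(1-\rho)$; everything else is routine.
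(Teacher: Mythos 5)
Your proposal is correct and follows essentially the same route as the paper's proof: both use the explicit piecewise-affine formula from Proposition~\ref{pro:dmpi-explicit}, check that each branch has strictly positive slope, establish continuity at the interior breakpoints $z_t$ via the telescoping identity $\Phi_{t+1}(p)-\Phi_t(p)=(1-z_t)\beta^t$, and handle the accumulation point $z_\infty$ by passing $t\to\infty$ in the middle-branch formula and invoking the fixed-point relation $z_\infty(1-\rho)=p$ together with the closed form for $\Phi_\infty(p)$.
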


\begin{proof}[Proof sketch]
Proposition~\ref{pro:dmpi-explicit} shows that $m$ is affine on each interval
$(0,p)$, $(z_{t-1},z_t)$, and $(z_\infty,1)$ with positive slope, and adjacent
branches match at the breakpoints by construction. Continuity and strict
monotonicity follow. Details are given in Appendix A.3.
\end{proof}

Figure~\ref{fig:mpi} illustrates the resulting piecewise-affine index.

\begin{figure}[!htbp]
  \centering
  \includegraphics[width=0.45\textwidth]{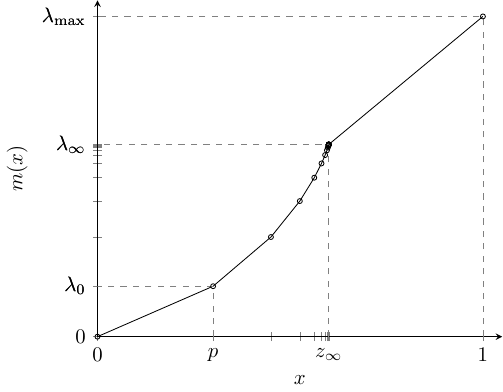}%
  \caption{MP index $m(x)$ versus belief state $x$.}
  \label{fig:mpi}
\end{figure}

\subsection{Verifying \textup{(PCLI3)} and threshold-indexability}
\label{s:pcli3}

To verify the remaining condition \textup{(PCLI3)}, we invoke a sufficient criterion from
\cite[Proposition~6]{nmmor20}: if \textup{(PCLI1--PCLI2)} hold and, for each initial belief state $x$,
every threshold-policy trajectory is confined to a countable closed set $\mathcal{D}(x)$ that forms
the endpoint set of a partition of $[0,1]$ into left-semiclosed intervals and singletons, then
\textup{(PCLI3)} holds.

\begin{proposition}\label{pro:reachable-set}
For each $x\in[0,1]$, every belief trajectory under any threshold policy is contained in the
countable closed set
\[
\mathcal{D}(x)=\{0,1\}\cup\{h_t(x):t\geqslant 0\}\cup\{z_t:t\geqslant 0\}\cup\{z_\infty\}.
\]
\end{proposition}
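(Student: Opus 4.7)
The plan is to prove the claim by a simple induction on time $t$, tracking which of two one-sided ``orbits'' the belief $X(t)$ lies on: the passive iterates of the initial state, $\{h_s(x) : s \geqslant 0\}$, and the passive iterates of the post-intervention belief, $\{z_s : s \geqslant 0\}$ (recall $z_s = h_s(p)$). The two structural facts that make the induction essentially trivial are: (i) the passive map $h$ keeps each orbit within itself, via $h(h_s(x)) = h_{s+1}(x)$ and $h(z_s) = h(h_s(p)) = h_{s+1}(p) = z_{s+1}$; and (ii) the active action always resets the belief to $p = z_0$, which lies on the $z$-orbit.

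Concretely, I would fix an arbitrary threshold $z \in \overline{\mathbb{R}}$ and let $\{X(t)\}_{t \geqslant 0}$ denote the deterministic belief trajectory under the $z$-threshold policy with $X(0) = x$. The base case $X(0) = h_0(x)$ is immediate. For the inductive step I split on the rule of the threshold policy: if $X(t) > z$ the policy activates and $X(t+1) = p = z_0$, still in the required set; if $X(t) \leqslant z$ the policy is passive and, by fact (i), $X(t+1)$ remains on whichever orbit $X(t)$ currently occupies. Both branches preserve membership in $\{h_s(x)\}_{s \geqslant 0} \cup \{z_s\}_{s \geqslant 0} \subset \mathcal{D}(x)$, and the argument handles the degenerate thresholds $z = \pm\infty$ uniformly (always-passive keeps the trajectory on the $x$-orbit, always-active collapses it onto $\{z_0\}$ after one step).

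It then remains to verify that $\mathcal{D}(x)$ is countable and closed — the form required by the sufficient criterion of \citet{nmmor20} (their Proposition~6) that we will invoke subsequently to establish \textup{(PCLI3)}. Both orbits are countable monotone sequences with common limit $z_\infty$, by the closed form $h_t(y) = z_\infty + (y - z_\infty)\rho^t$ together with $\rho \in (0,1)$, so adjoining $\{z_\infty\}$ supplies the unique accumulation point of the union. Appending the boundary points $\{0,1\}$ ensures that $\mathcal{D}(x)$ contains the boundary of $[0,1]$ and can serve as the endpoint set of a partition of $[0,1]$ into left-semiclosed intervals and isolated singletons, as required by that criterion.

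The main obstacle, such as it is, lies not in the induction itself — which amounts to bookkeeping — but in reconciling the stated form of $\mathcal{D}(x)$ with the precise partition hypothesis of Proposition~6 in \citet{nmmor20}. The two orbits $\{h_s(x)\}$ and $\{z_s\}$ may interleave in a complicated way depending on $x$ (for instance, some $h_s(x)$ might coincide with some $z_t$ when $x$ is itself on the post-intervention orbit, or lie strictly between two consecutive $z_t$'s), so I would briefly note that, since the union is countable with the unique accumulation point $z_\infty$, its complement in $[0,1]$ is an at-most-countable disjoint union of open intervals, which yields the required partition once one orients endpoints consistently (left-semiclosed intervals plus the singleton $\{z_\infty\}$ and the boundary points).
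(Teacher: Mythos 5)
Your proposal is correct and follows essentially the same line as the paper's proof: both arguments rest on the two facts you isolate (the passive map keeps each orbit within itself; activation resets to $p=z_0$), both identify $z_\infty$ as the unique accumulation point via the geometric convergence $h_t(y)-z_\infty=(y-z_\infty)\rho^t$, and both then appeal to the structure of countable closed sets to construct the required partition of $[0,1]$ into left-semiclosed intervals and singletons for Proposition~6 of the reference. The main cosmetic difference is framing: you run a time-$t$ induction tracking which orbit the current belief lies on, while the paper decomposes the trajectory into an initial passive spell from $x$ followed by regenerative cycles (intervention at $p$, then passive evolution along $\{z_t\}$); these are the same observation packaged differently. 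The paper also goes one step further and proves the \emph{reverse} inclusion (that $\{h_t(x)\}\cup\{z_t\}$ is exactly the reachable set, by exhibiting a threshold realizing each point) — this is not needed for the proposition's statement, which only asserts containment, but it tightens the characterization of $\mathcal{D}(x)$. Finally, the paper isolates the partition step as a standalone lemma (every countable closed $K\subset[0,1]$ with $0,1\in K$ is the endpoint set of such a partition, via the open-set structure theorem), which is exactly the argument you sketch in your last paragraph; you would simply want to make that lemma explicit rather than leaving it as a ``brief note.''
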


\begin{proof}[Proof sketch]
Under passivity the belief evolves along the trajectory $\{h_t(x)\}_{t\geqslant 0}$.
Under activation the belief resets to $p$, and subsequent passive evolution visits the points
$z_t=h_t(p)$. Hence every visited belief lies in $\{h_t(x)\}_{t\geqslant 0}\cup\{z_t\}_{t\geqslant 0}$, together with
the endpoints $\{0,1\}$. The only accumulation point is $z_\infty$, so
$\mathcal{D}(x)$ is closed and countable. Details are in Appendix~A.4.
\end{proof}

\begin{lemma}[PCLI3]
\label{lma:pcli3}
The adherence model satisfies \textup{(PCLI3)}.
\end{lemma}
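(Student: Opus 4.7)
The plan is to invoke the sufficient criterion from \citep[Prop.~6]{nmmor20}, which reduces (PCLI3) to a structural reachability condition once (PCLI1) and (PCLI2) are available. That criterion asserts: if both prior conditions hold and, for every initial belief $x$, every threshold-policy trajectory lies in a countable closed set $\mathcal{D}(x)\subset[0,1]$ whose points serve as the endpoint set of a partition of $[0,1]$ into left-semiclosed intervals and singletons, then (PCLI3) follows automatically. This bypasses any direct manipulation of Lebesgue--Stieltjes integrals over the step-function $G(x,\cdot)$.

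The first two ingredients are already in place: Lemma~\ref{lma:pcli1} supplies (PCLI1) and Lemma~\ref{lma:pcli2} supplies (PCLI2). The reachability ingredient is almost entirely furnished by Proposition~\ref{pro:reachable-set}, which exhibits
\[
\mathcal{D}(x)=\{0,1\}\cup\{h_t(x):t\geqslant 0\}\cup\{z_t:t\geqslant 0\}\cup\{z_\infty\}
\]
as a countable closed superset of every threshold trajectory starting at $x$. What still has to be checked is that $\mathcal{D}(x)$ indexes a partition of $[0,1]$ of the required form.

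To confirm this, I would observe that $z_\infty$ is the unique accumulation point of $\mathcal{D}(x)$: both $\{h_t(x)\}_{t\geqslant 0}$ and $\{z_t\}_{t\geqslant 0}=\{h_t(p)\}_{t\geqslant 0}$ are strictly monotone with limit $z_\infty$ (from below when the starting point lies below $z_\infty$, from above otherwise), and neither accumulates elsewhere. Consequently, $\mathcal{D}(x)$ decomposes as $\{0,z_\infty,1\}\cup A^{-}\cup A^{+}$, where $A^{-}\subset[0,z_\infty)$ is a strictly increasing sequence clustering only at $z_\infty$, and $A^{+}\subset(z_\infty,1]$ is either finite or a strictly decreasing sequence with the same limit. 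Enumerating both pieces in natural order and inserting left-semiclosed intervals $(d_k,d_{k+1}]$ between consecutive endpoints, together with the singletons $\{0\}$, $\{z_\infty\}$, and $\{1\}$, yields exactly the partition prescribed by the criterion.

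The main obstacle, if there is one, is the verification of this partition condition, but once the monotonicity and one-point-accumulation structure of $\{h_t(x)\}$ and $\{z_t\}$ have been recorded it is essentially bookkeeping; all the analytic work is already absorbed into Propositions~\ref{pro:edrwm}--\ref{pro:reachable-set} and Lemmas~\ref{lma:pcli1}--\ref{lma:pcli2}. With all three hypotheses confirmed, \citep[Prop.~6]{nmmor20} delivers (PCLI3), and combining with Lemmas~\ref{lma:pcli1}--\ref{lma:pcli2} completes the hypotheses of Theorem~\ref{the:pcli}, yielding threshold-indexability and identifying the Whittle index with the closed-form MP index of Proposition~\ref{pro:dmpi-explicit}.
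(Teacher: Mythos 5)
Your proposal matches the paper's proof exactly: both invoke \citep[Prop.~6]{nmmor20}, supplying (PCLI1) and (PCLI2) from Lemmas~\ref{lma:pcli1}--\ref{lma:pcli2} and the reachability/countable-closed-set condition from Proposition~\ref{pro:reachable-set}. The only (cosmetic) difference is in how the partition requirement on $\mathcal{D}(x)$ is discharged: you argue directly from the one-accumulation-point structure of $\{h_t(x)\}\cup\{z_t\}$, whereas the paper's appendix extracts this from a general topological lemma (any countable closed subset of $[0,1]$ containing the endpoints indexes such a partition, via the structure theorem for open sets); both are valid, and your tailored argument is a correct, if slightly more ad hoc, substitute.
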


\begin{proof}
By Lemmas~\ref{lma:pcli1} and~\ref{lma:pcli2}, \textup{(PCLI1--PCLI2)} hold.
Proposition~\ref{pro:reachable-set} gives the required set
$\mathcal{D}(x)$ for each $x$. Hence \textup{(PCLI3)} follows from \cite[Proposition~6]{nmmor20}.
\end{proof}

We now combine the PCL conditions with the MP index formulas to establish
threshold-indexability of the model and identify its Whittle index.

\begin{theorem}\label{the:adherence-indexability}
The discounted adherence restless bandit model is threshold-indexable. Its Whittle index equals its MP index:
$w(x)=m(x)$ for all $x\in[0,1]$.
\end{theorem}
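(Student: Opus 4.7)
The plan is to deploy the verification theorem (Theorem~\ref{the:pcli}) directly: that theorem converts verification of \textup{(PCLI1--PCLI3)} into both threshold-indexability and the identity $w=m$ in a single step, so the only work left is to assemble the three lemmas already proved in this section and apply the theorem to the single-patient $\lambda$-price subproblem~\eqref{eq:lppg}.

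First I would cite Lemma~\ref{lma:pcli1}, which established $g(x,z)\geqslant 1-\beta>0$ uniformly in $x$ and $z$, placing \textup{(PCLI1)} on record and legitimizing the MP metric $m(x,z)=f(x,z)/g(x,z)$. Next I would invoke Lemma~\ref{lma:pcli2}, which showed that the MP index $m(x)=m(x,x)$ is continuous and (in fact strictly) increasing in $x$, giving \textup{(PCLI2)}. Finally, Lemma~\ref{lma:pcli3} supplies \textup{(PCLI3)} by combining the countable reachable set $\mathcal{D}(x)$ from Proposition~\ref{pro:reachable-set} with the sufficient criterion of \cite[Proposition~6]{nmmor20}.

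With all three conditions in hand, Theorem~\ref{the:pcli} applies to the single-patient $\lambda$-price problem and yields in one stroke: (i) existence of an optimal threshold map $z^*(\cdot)$ on $[m(0),m(1)]$, (ii) indexability of the model, and (iii) the identification $w(x)=m(x)$ for every $x\in[0,1]$, with $z^*(\cdot)$ any generalized inverse of the nondecreasing map $m(\cdot)$. Combined with the explicit piecewise-affine formula in Proposition~\ref{pro:dmpi-explicit}, this simultaneously delivers a closed-form Whittle index. There is no substantive obstacle remaining at this stage: all of the real labor was discharged in establishing \textup{(PCLI1--PCLI3)} through Propositions~\ref{pro:edrwm}--\ref{pro:reachable-set} and Lemmas~\ref{lma:pcli1}--\ref{lma:pcli3}; Theorem~\ref{the:adherence-indexability} itself is a one-line corollary of the PCL verification framework.
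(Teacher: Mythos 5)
Your proposal is correct and matches the paper's own proof exactly: both assemble Lemmas~\ref{lma:pcli1}--\ref{lma:pcli3} to verify \textup{(PCLI1--PCLI3)} and then apply the verification Theorem~\ref{the:pcli} to obtain threshold-indexability together with $w(x)=m(x)$ in a single step. Nothing further is needed.
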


\begin{proof}
The model satisfies \textup{(PCLI1--PCLI3)}, so Theorem~\ref{the:pcli} yields the result.
\end{proof}

\subsection{The optimal threshold map $z^*(\lambda)$}
\label{s:otfl}
To compute the Lagrangian relaxation and dual bound, we study next the single-patient $\lambda$-price subproblem in which an
intervention incurs price $\lambda$. By Theorem~\ref{the:adherence-indexability}, an optimal policy is a belief-threshold
rule, so it suffices to characterize the optimal threshold map $z^*(\lambda)$. The next proposition gives $z^*(\lambda)$
explicitly as the inverse of the MP index $m(\cdot)$ (see Theorem~\ref{the:pcli}).

Recall that the MP index $m(\cdot)$ is continuous and increasing by
Lemma~\textup{\ref{lma:pcli2}}. Hence $m(\cdot)$ is a bijection from $[0,1]$ onto
$[0,m(1)]$, and the \emph{optimal threshold map} is unique and given by its inverse
$
z^*(\lambda)\;\triangleq\;m^{-1}(\lambda)$, for $\lambda\in[0,m(1)],$
which is continuous and increasing.
To characterize $z^*(\lambda)$, we write
\[
C_t
\;\triangleq\;
\beta^{t+1}-\beta+\beta(1-\beta)\,\Phi_t(p),
\]
and consider 
the state-space breakpoints
\[
0,\quad z_0 = p,\quad z_1,\quad z_2,\ \ldots,\ z_t,\ \ldots,\ z_\infty,\quad 1.
\]
Define the corresponding index values at the threshold breakpoints by
\[
\lambda_t \;\triangleq\; m(z_t)\quad (t=0, 1,2,\ldots),\quad
\lambda_\infty \;\triangleq\; m(z_\infty),\quad
\lambda_{\max} \;\triangleq\; m(1)=\frac{r}{1-\beta\rho}.
\]
These $\lambda_t$ form the breakpoints in price space. For later aggregate calculations with heterogeneous patients, we use the endpoint convention
\[
z^*(\lambda)=1,\qquad \lambda\geqslant \lambda_{\max},
\]
so that prices above a patient's largest index select the all-passive threshold for that patient. On \([0,\lambda_{\max}]\), the inverse is given explicitly as follows.

\begin{proposition}[Optimal threshold map]
\label{pro:zstar}
For $\lambda\in[0,\lambda_{\max}]$, 
\[
z^*(\lambda)=
\begin{cases}
\dfrac{\lambda}{r}, & 0\leqslant \lambda<\lambda_0,\\[10pt]
\displaystyle
\dfrac{1-\beta}{r\bigl(1-\beta^{t+1}\bigr)}\,\lambda
-\dfrac{C_t}{1-\beta^{t+1}},
& \lambda_{t-1}\leqslant \lambda<\lambda_t,\ \ t=1,2,\ldots,\\[10pt]
\dfrac{1-\beta\rho}{r}\,\lambda, & \lambda_\infty\leqslant \lambda\leqslant \lambda_{\max}.
\end{cases}
\]
\end{proposition}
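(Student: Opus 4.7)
The plan is to invert the explicit piecewise-affine formula for the MP index $m(\cdot)$ from Proposition~\ref{pro:dmpi-explicit} branch-by-branch, and to check that the branches agree at the breakpoints $\lambda_t=m(z_t)$. The groundwork is already in place: by Lemma~\ref{lma:pcli2}, $m\colon[0,1]\to[0,\lambda_{\max}]$ is continuous and strictly increasing, hence a bijection, so $z^*(\lambda)\triangleq m^{-1}(\lambda)$ exists, is unique, and is continuous and strictly increasing on $[0,\lambda_{\max}]$; in particular, solving $m(x)=\lambda$ on each branch of the state space produces the corresponding branch of $z^*$ on the matching interval of prices.

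First I would handle the two extreme branches. On $0\leqslant x<p$, $m(x)=rx$, so the defining equation $m(x)=\lambda$ inverts to $x=\lambda/r$ on $0\leqslant\lambda<\lambda_0=m(p)=rp$, matching the first case. On $z_\infty\leqslant x\leqslant 1$, $m(x)=rx/(1-\beta\rho)$, so inversion gives $x=(1-\beta\rho)\lambda/r$ on $\lambda_\infty\leqslant\lambda\leqslant\lambda_{\max}$, matching the third case.

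Next I would deal with the middle family of intervals $z_{t-1}\leqslant x<z_t$ for $t\geqslant 1$. Writing the middle branch of Proposition~\ref{pro:dmpi-explicit} in terms of $C_t$, we have
\[
m(x)\;=\;\frac{r}{1-\beta}\bigl[(1-\beta^{t+1})x+C_t\bigr],
\]
using the identity $-\beta(1-\beta^{t})+\beta(1-\beta)\Phi_t(p)=\beta^{t+1}-\beta+\beta(1-\beta)\Phi_t(p)=C_t$. Since $1-\beta^{t+1}>0$, solving $m(x)=\lambda$ for $x$ yields the second case of Proposition~\ref{pro:zstar}. The domain of this branch is $\lambda_{t-1}\leqslant\lambda<\lambda_t$ because $m$ is strictly increasing and maps $[z_{t-1},z_t)$ bijectively onto $[\lambda_{t-1},\lambda_t)$ by definition of $\lambda_t$.

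Finally I would verify consistency at the breakpoints. By continuity of $m$ the branches automatically agree at the endpoints $\lambda_0$, $\lambda_t$ for $t\geqslant 1$, and $\lambda_\infty$ (this is essentially the matching already used in Lemma~\ref{lma:pcli2}). No step here is a real obstacle: the only routine pitfall is the algebraic identification $-\beta(1-\beta^t)+\beta(1-\beta)\Phi_t(p)=C_t$ needed to rewrite the middle branch of $m$ compactly; once that is in place the inversion is elementary. Since the three inverted expressions together cover $[0,\lambda_{\max}]$ and each coincides with $m^{-1}(\lambda)$ on its sub-interval, the piecewise formula of Proposition~\ref{pro:zstar} follows.
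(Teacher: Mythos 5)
Your proposal is correct and follows essentially the same route as the paper's proof: invoke Lemma~\ref{lma:pcli2} for the bijection, invert the piecewise-affine formula of Proposition~\ref{pro:dmpi-explicit} branch by branch, and use strict monotonicity to identify the matching price intervals $[\lambda_{t-1},\lambda_t)$. Your explicit check of the rewriting $-\beta(1-\beta^t)+\beta(1-\beta)\Phi_t(p)=C_t$ is a small extra verification that the paper leaves implicit, but it does not change the argument.
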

For the dual computation we extend the selector by setting $z^*(\lambda)=1$ for $\lambda\geqslant \lambda_{\max}$, the all-passive threshold on $[0,1]$. Thus $z_n^*(\lambda)$ is defined for every price $\lambda \geqslant 0.$
\begin{proof}[Proof outline]
By Lemma~\ref{lma:pcli2}, \(m\) is continuous and increasing from \([0,1]\) onto \([0,\lambda_{\max}]\), hence has inverse \(z^*\). Proposition~\ref{pro:dmpi-explicit} gives affine branches on \([0,p)\), \([z_{t-1},z_t)\), and \([z_\infty,1]\); solving \(\lambda=m(x)\) on each branch gives the displayed formulas. Continuity of \(z^*\) follows from that of \(m\).
\end{proof}

Figure~\ref{fig:zstar} illustrates the piecewise-affine threshold map used below in the Lagrangian bound computation.

\begin{figure}[!htbp]
  \centering
  \includegraphics[width=0.45\textwidth]{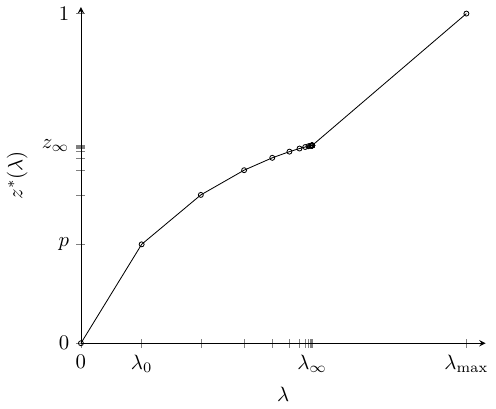}%
  \caption{Optimal threshold $z^*(\lambda)$ versus intervention price $\lambda$.}
  \label{fig:zstar}
\end{figure}

\rev{Returning to the alternative Lagrangian benchmark introduced in
Remark~\ref{rem:relax-index-heuristic}, and to its sign characterization in
Corollary~\ref{cor:lagindex-sign}, the formulas for the marginal
metrics yield a simple structural description of the Lagrangian index as a
function of the belief state.}

\rev{
\begin{proposition}\label{pro:lagindex-shape}
For every fixed \(\lambda\geqslant 0\), letting \(z^*(\lambda)=1\) for \(\lambda\geqslant \lambda_{\max}\), define
\[
d^\lambda(x)\;\triangleq\;f\bigl(x,z^*(\lambda)\bigr)
-\lambda\,g\bigl(x,z^*(\lambda)\bigr),
\qquad x\in[0,1].
\]
Then \(d^\lambda(\cdot)\) is continuous, increasing, and piecewise affine on
\([0,1]\). In particular, the Lagrangian index
\(d^*(x)=d^{\lambda^*}(x)\) is continuous, increasing, and piecewise affine.
\end{proposition}

\begin{proof}
Fix \(\lambda\geqslant 0\) and put \(z=z^*(\lambda)\). If \(\lambda<\lambda_{\max}\), then \(\lambda=m(z)\); if \(\lambda\geqslant \lambda_{\max}\), then \(z=1\). Substitution into Proposition~\ref{pro:dmrwm} gives four regimes. For \(z<p\) and \(z\ge1\), \(d^\lambda(x)=rx-\lambda\) and \(d^\lambda(x)=rx/(1-\beta\rho)-\lambda\). For \(p\leqslant z<z_\infty\), with \(A_\lambda(z)=K_F(z)-\lambda K_G(z)\),
\[
d^\lambda(x)=
\begin{cases}
(1-\beta^s)A_\lambda(z)-r\Phi_s(x), & x\leqslant z,\ s=\tau(x,z),\\
(1-\beta)A_\lambda(z)-r(1-x), & x>z.
\end{cases}
\]
Thus \(d^\lambda\) is affine on the intervals determined by \(\tau(x,z)\); the slopes are \(r\{1-(\beta\rho)^s\}/(1-\beta\rho)>0\) on the passive-spell pieces and \(r>0\) on \((z,1]\). Adjacent pieces match because \(\Phi_{s+1}(h^{-s}(z))=\Phi_s(h^{-s}(z))+\beta^s(1-z)\) and \(\lambda=m(z)\) imply \((1-\beta)A_\lambda(z)=r(1-z)\). For \(z_\infty\leqslant z<1\), Proposition~\ref{pro:dmrwm}\textup{(c)} gives two affine pieces with slopes \(r/(1-\beta\rho)\) and \(r\); they match at \(h^{-1}(z)\) by \(\lambda=m(z)=rz/(1-\beta\rho)\). Hence \(d^\lambda\) is continuous, increasing, and piecewise affine. Taking \(\lambda=\lambda^*\) gives the claim for \(d^*\).
\end{proof}
}

Figure~\ref{fig:lagindex} illustrates the resulting continuous, increasing, piecewise-affine Lagrangian index.

\begin{figure}[!htbp]
  \centering
  \includegraphics[width=0.45\textwidth]{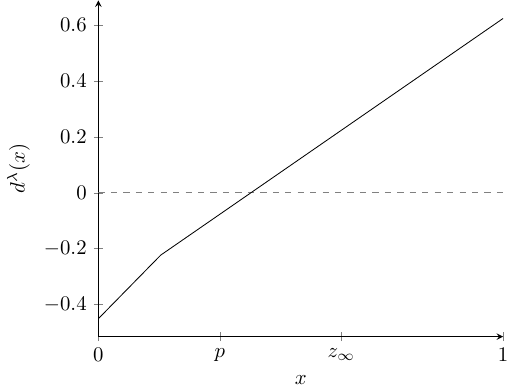}%
  \caption{Lagrangian index \(d^\lambda(x)\) versus belief \(x\) for
  \(\lambda=0.45\).}
  \label{fig:lagindex}
\end{figure}

\section{Analytic Lagrangian relaxation and dual bound computation}
\label{s:dual-bound}
This section uses the PCL analysis to obtain an analytic Lagrangian relaxation and computable dual bound. The explicit threshold map and threshold metrics give the single-patient \(\lambda\)-price value in closed form for any multiplier; aggregating these values and minimizing by bisection gives the multi-patient bound. \rev{ The same marginal metrics yield the Lagrangian indices \(d_n^*(\cdot)\) used by the standard and forced-capacity benchmarks.} We also specialize the bound to a uniform initial belief distribution, producing the population-averaged benchmark used in the experiments.

\subsection{Population-averaged metrics under a uniform initial belief}
\label{s:uniform}
We begin by removing the dependence on a specific initial belief through a population-averaging step. Concretely, we assume
the initial belief is drawn from $\nu_0=\mathrm{Unif}[0,1]$ and compute the corresponding averaged discounted reward and work
metrics under a threshold policy. These closed-form expressions will serve as basic building blocks for the uniform-initial
Lagrangian relaxation and the dual bound used in our computational experiments.

We write the corresponding population-averaged discounted reward and work metrics for patient $n$ under the
$z$-threshold policy as
\[
F_n(z)\triangleq \int_0^1 F_n(x,z)\, \mathrm{d}x,
\quad
G_n(z)\triangleq \int_0^1 G_n(x,z)\,\mathrm{d}x,
\]
which admit closed-form expressions (piecewise in $z$). These formulas are used in
Sections~\ref{s:single-lagrangian},~\ref{s:bisection}, and~\ref{s:numerics} to compute Lagrangian bounds and generate the
numerical results, and are given in the Online Supplement, Corollary~1. 

\subsection{Single-patient piecewise-affine Lagrangian value}
\label{s:single-lagrangian}
We next express the relaxed objective as an explicit convex function of the per-intervention price $\lambda$.
The key observation is that, once the single-patient $\lambda$-price problem is solved by a threshold
$z_n^*(\lambda)$ (Theorem~\ref{the:adherence-indexability} and Proposition~\ref{pro:zstar}), the single-patient Lagrangian
optimal value is the pointwise supremum of affine functions of $\lambda$, and its right derivative equals the slope of the
active affine piece (cf.\ \cite[Lemma~4]{nmmor20}).

For a patient $n$ and threshold $z$, define the Lagrangian performance
\[
\mathcal{L}_n(z;\lambda)\triangleq F_n(z)-\lambda\,G_n(z).
\]
The single-patient Lagrangian optimal value is
\[
L_n(\lambda)\triangleq \sup_{z\in[0,1]}\mathcal{L}_n(z;\lambda)
= F_n\!\bigl(z_n^*(\lambda)\bigr)-\lambda\,G_n\!\bigl(z_n^*(\lambda)\bigr),
\]
where $z_n^*(\lambda)$ is the optimal threshold selector (Proposition~\ref{pro:zstar}).
Thus $L_n(\cdot)$ is convex and piecewise affine, and its right derivative $L'_{n,+}(\lambda)$ satisfies
\begin{equation}\label{eq:ell_n-lambda}
L'_{n,+}(\lambda)\triangleq \frac{d^+}{d\lambda}L_n(\lambda)
=-\,G_n\!\bigl(z_n^*(\lambda)\bigr).
\end{equation}

Further, the aggregate Lagrangian dual function is
\[
L(\lambda)=\sum_{n=1}^{N}L_n(\lambda)+\frac{M}{1-\beta}\,\lambda,
\]
and hence its right derivative is given by
\begin{equation}\label{eq:ell-lambda}
L'_+(\lambda)\triangleq \frac{d^+}{d\lambda}L(\lambda)
=-\sum_{n=1}^{N}G_n\!\bigl(z_n^*(\lambda)\bigr)+\frac{M}{1-\beta}.
\end{equation}
Since $L(\cdot)$ is convex and piecewise affine, $L'_+(\lambda)$ exists for all $\lambda$ and is a nondecreasing,
piecewise-constant function of $\lambda$.

For the aggregate search, set
\[
\lambda_{\max}\triangleq\max_{1\leqslant n\leqslant N}m_n(1)
=\max_{1\leqslant n\leqslant N}\frac{r_n}{1-\beta\rho_n}.
\]
For prices above patient \(n\)'s value of \(m_n(1)\), the endpoint convention following Proposition~\ref{pro:zstar} gives \(z_n^*(\lambda)=1\) and \(G_n(z_n^*(\lambda))=0\).

Equation~\eqref{eq:ell-lambda} makes computation straightforward: for given $\lambda$, we evaluate $z_n^*(\lambda)$ for
each patient $n$ (via Proposition~\ref{pro:zstar} and the endpoint convention) and then compute $G_n\!\bigl(z_n^*(\lambda)\bigr)$. Summing these terms
yields $L'_+(\lambda)$, with per-evaluation cost $O(N)$.

Note that any minimizer $\lambda^*$ of $L(\cdot)$ on $[0,\lambda_{\max}]$ satisfies $0\in\partial L(\lambda^*)$.
Equivalently:
(1) if $L'_+(0)\geqslant 0$, then $\lambda^*=0$;
(2) if $L'_+(\lambda_{\max})\leqslant 0$, $\lambda^*=\lambda_{\max}$; and
(3) otherwise, there exists $\lambda^*\in(0,\lambda_{\max})$ with
      $L'_+(\lambda^-)\leqslant 0\leqslant L'_+(\lambda^+)$.

We exploit the monotonicity of $L'_+(\lambda)$ to compute the optimal price $\lambda^*$ by a
one-dimensional bisection method, without enumerating breakpoints.

\subsection{Multi-patient dual bound computation via bisection}
\label{s:bisection}
For the uniform initial belief distribution, the population-averaged dual problem is
\[
\lambda^*\in\argmin_{\lambda\in[0,\lambda_{\max}]}L(\lambda),
\qquad
\lambda_{\max}=\max_n m_n(1)=\max_n\frac{r_n}{1-\beta\rho_n}.
\]
The same procedure applies to a fixed initial state \(\mathbf{x}^0\) by replacing \(L(\lambda)\) and \(G_n(z)\) with \(L(\mathbf{x}^0;\lambda)\) and \(G_n(x_n^0,z)\). Since \(L\) is convex and \(L'_+\) in \eqref{eq:ell-lambda} is nondecreasing, the minimizer is found as follows. If \(L'_+(0)\geqslant 0\), set \(\lambda^*=0\); if \(L'_+(\lambda_{\max})\le0\), set \(\lambda^*=\lambda_{\max}\). Otherwise, bisect the bracket \([0,\lambda_{\max}]\), evaluating \(L'_+(\lambda)\) by \eqref{eq:ell-lambda}, until either \(|L'_+(\lambda)|\le\varepsilon\) or the bracket length is at most \(\varepsilon\); then evaluate \(L(\lambda^*)\) from the single-patient values in Section~\ref{s:single-lagrangian}.

Each iteration evaluates \(z_n^*(\lambda)\) and \(G_n(z_n^*(\lambda))\) for all patients, so it costs \(O(N)\). The total cost is therefore \(O\!\bigl(N\log(1/\varepsilon)\bigr)\). Countably many breakpoints, with possible accumulation, cause no numerical difficulty because the algorithm uses only pointwise evaluations of the monotone derivative.

\rev{
After \(\lambda^*\) is computed, the Lagrangian index benchmarks require no further dynamic programming. For any patient state \(x\),
\[
 d_n^*(x)=f_n\bigl(x,z_n^*(\lambda^*)\bigr)-\lambda^* g_n\bigl(x,z_n^*(\lambda^*)\bigr).
\]
The standard Lagrangian policy activates positive \(d_n^*(x)\)'s up to capacity, while the forced-capacity policy activates the \(M\) largest values. Thus the same PCL formulas compute the Whittle index, the dual bound, and both Lagrangian index benchmarks.
}

\section{Dependence of the Whittle index on model parameters}
\label{s:dmpimp}
By Theorem~\ref{the:adherence-indexability}, the Whittle index equals the MP index, denoted \(m\). We summarize how \(m(x;p,q)\) varies with the lapse probability \(p\) and spontaneous-recovery probability \(q\), the two transition parameters governing passive adherence dynamics.

\subsection{Dependence on the lapse probability from adherence \(p\)}
\label{s:dilpp}
Fix \(q\) and \(x\). Although larger \(p\) makes adherence more fragile, it also changes the passive update \(h(x)=p+(1-p-q)x\). The explicit index formula nevertheless gives a global monotonicity result.

\begin{proposition}
\label{pro:mp-monotone-p}
\(m(x;p,q)\), as a function of \(p\in(0,1-q)\), is nonincreasing on \((0,1-q)\), decreasing on \((0,\min\{x,1-q\})\), and constant on \([x,1-q)\).
\end{proposition}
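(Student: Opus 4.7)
The plan is to exploit the explicit piecewise-affine formula for $m(x;p,q)$ in Proposition~\ref{pro:dmpi-explicit} and treat three parameter regimes for $p$ separately, gluing them by continuity. Noting that $z_\infty(p)=p/(p+q)$, for fixed $x\in(0,1)$ and $q$ I partition $p\in(0,1-q)$ as: \textbf{(1)} $p>x$, where branch~1 of Proposition~\ref{pro:dmpi-explicit} gives $m(x;p,q)=rx$, independent of $p$; \textbf{(2)} $xq/(1-x)<p\leqslant x$ (the middle branch, nonempty iff $x<1-q$); and \textbf{(3)} $0<p\leqslant xq/(1-x)$ (the third branch, equivalent to $x\geqslant z_\infty(p)$, nonempty iff $x<1-q$ or always when $x\geqslant 1-q$).

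For (1) the constancy of $p\mapsto m(x;p,q)$ on $(x,1-q)$ is immediate, and continuity of $m$ in $x$ (Lemma~\ref{lma:pcli2}) extends it to the closed endpoint, establishing the claim on $[x,1-q)$. For (3) direct differentiation of $m=rx/(1-\beta\rho)$ with $\rho=1-p-q$ gives $\partial m/\partial p=-rx\beta/(1-\beta\rho)^2<0$, so $m$ is strictly decreasing on the branch-3 sub-interval.

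The main work is in (2), where both the coefficients of $m$ and the index $t=t(p)$ defined by $z_{t-1}(p)\leqslant x<z_t(p)$ depend on $p$. I would: (i)~partition the branch-2 $p$-interval into open sub-intervals on which $t$ is constant, separated by the $p$-values at which a breakpoint $z_t(p)$ crosses $x$; (ii)~on each such region differentiate the formula, noting that the factor $(1-\beta^{t+1})x-\beta(1-\beta^{t})$ is $p$-independent at fixed $t$, so
\[
\frac{\partial m}{\partial p}\;=\;r\beta\,\Phi_t'(p)\;=\;-\,r\beta\sum_{s=0}^{t-1}\frac{\partial h_s(p)}{\partial p}\,\beta^{s};
\]
(iii)~verify $\partial h_s/\partial p>0$ for all $s\geqslant 0$ from the closed form $h_s(p)=p(1-\rho^{s+1})/(p+q)$ via the quotient rule and $\partial\rho/\partial p=-1$ (the numerator simplifies to $q(1-\rho^{s+1})+(s+1)p\rho^{s}(p+q)>0$), yielding $\partial m/\partial p<0$; (iv)~glue across $t$-transitions by continuity of $m(x;\cdot,q)$ in $p$, which is inherited from continuity of the closed-form expressions and of the breakpoints $z_t(p)$ in $p$. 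Continuity at the inter-branch transitions $p=xq/(1-x)$ and $p=x$ (same argument) then stitches the three pieces into a single continuous function that is strictly decreasing on $(0,\min\{x,1-q\})$ and constant on $[x,1-q)$, hence nonincreasing on $(0,1-q)$.

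The crux is step~(iii): establishing $\partial h_s/\partial p>0$ uniformly in $s$. Although a short algebraic check once the closed form of $h_s$ is in hand, it is what converts the intricate dependence of $\Phi_t(p)$ on $p$ (through both $\rho$ and $p$ itself) into the decisive sign on $\partial m/\partial p$ within each constant-$t$ region of branch~2.
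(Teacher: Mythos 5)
Your overall plan mirrors the paper's proof: partition $p\in(0,1-q)$ into the constant region $[x,1-q)$, the middle branches, and the last branch; differentiate on each piece to get a nonpositive derivative; and glue by continuity. The one computational difference is at the crux step: you establish $\partial h_s/\partial p>0$ by differentiating the closed form $h_s(p)=p(1-\rho^{s+1})/(p+q)$ directly (and your simplification of the quotient-rule numerator to $q(1-\rho^{s+1})+(s+1)p\rho^s(p+q)$ is correct), whereas the paper instead sets $u_s\triangleq h_s'(p)$, derives the recursion $u_{s+1}=1-h_s+\rho\,u_s$ with $u_0=1$, and shows $u_s>0$ by induction. Both work; your route is perhaps more elementary since it needs no induction.

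The gap is in step~(iv), where you dispose of the gluing rather quickly. Two separate continuity checks are actually required. First, at each finite $t$-transition inside the middle region (i.e., at $p$-values where $z_t(p)=x$), it is not enough that each formula is continuous in $p$; you must verify that the $t$-branch and $(t+1)$-branch formulas actually agree at the transition point. That check uses the recursion $\Phi_{t+1}(p)=\Phi_t(p)+(1-z_t(p))\beta^t$ together with $x=z_t(p)$, as in the paper's Step~3. Second, and more substantially, at the middle/last boundary $p=p_\infty(x)=qx/(1-x)$ the branch index $t=\tau(p,x)$ tends to $\infty$ as $p\downarrow p_\infty(x)$, so the transition points accumulate there and ``the same argument'' does not apply. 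One needs a genuine limit argument, e.g.\ the paper's estimate $|\Phi_t(p)-\Phi_\infty(p)|\leqslant \beta^t/(1-\beta)$ combined with continuity of $\Phi_\infty(\cdot)$, to show the middle-branch values converge to the last-branch value as $p\downarrow p_\infty(x)$. Without these two checks you have strict decrease on each open sub-interval but have not ruled out upward jumps at the breakpoints, which is exactly what could defeat the monotonicity conclusion.
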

The proof is in Appendix~C.1. Figure~\ref{fig:mpvspx} illustrates the result.

\begin{figure}[!htbp]
  \centering
  \includegraphics[width=0.45\textwidth]{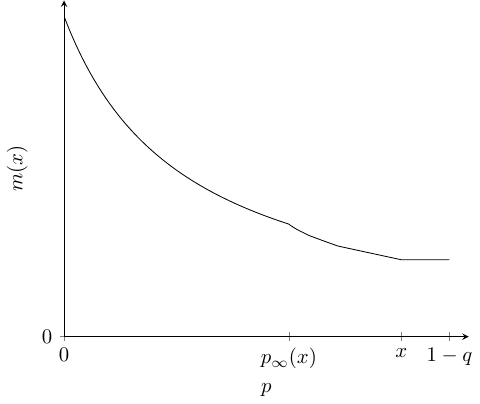}%
  \caption{Dependence of the Whittle index \(m(x)\) on the lapse probability \(p\) for fixed \(x\).}
  \label{fig:mpvspx}
\end{figure}

\begin{remark}[Interpretation of the \(p\)-dependence]
\label{re:mp-monotone-p}
For \(p\geqslant x\), the state lies on the first branch and \(m(x)=rx\); for \(p<x\), increasing \(p\) reduces the expected benefit of intervention and the index decreases.
\end{remark}

\subsection{Dependence on the spontaneous recovery probability \(q\)}
\label{s:dsrpq}
Fix \(p\). Unlike the dependence on \(p\), the dependence on \(q\) is not globally monotone; the sign changes across the branch regions induced by \(h_t(p,q)\) and \(z_\infty(p,q)=p/(p+q)\).

\begin{proposition}
\label{pro:mp-monotone-q}
Consider \(m(x;p,q)\) as a function of \(q\in(0,1-p)\).
\begin{enumerate}[label=(\roman*)]
\item If \(0\leqslant x<p\), then \(m(x;p,q)=rx\), independent of \(q\).
\item If \(x\geqslant p\), then on the first middle branch \(p\leqslant x<h_1(p,q)\) the index is independent of \(q\), while on every higher middle branch \(h_{t-1}(p,q)\leqslant x<h_t(p,q)\), \(t\ge2\), it is increasing in \(q\).
\item If \(x>p\) and \(q\geqslant q_\infty(x)\triangleq p(1-x)/x\), equivalently \(z_\infty(p,q)\leqslant x\), the index is decreasing in \(q\).
\end{enumerate}
Thus, for fixed \(p\) and \(x>p\), the index is locally flat on the first middle branch, increases on higher middle branches, and eventually decreases on the last branch.
\end{proposition}
The proof is in Appendix~C.2. Figure~\ref{fig:mpvsq2x} illustrates the two cases \(x\leqslant p\) and \(x>p\).

\begin{figure}[!htbp]
  \centering
  \includegraphics[width=0.8\textwidth]{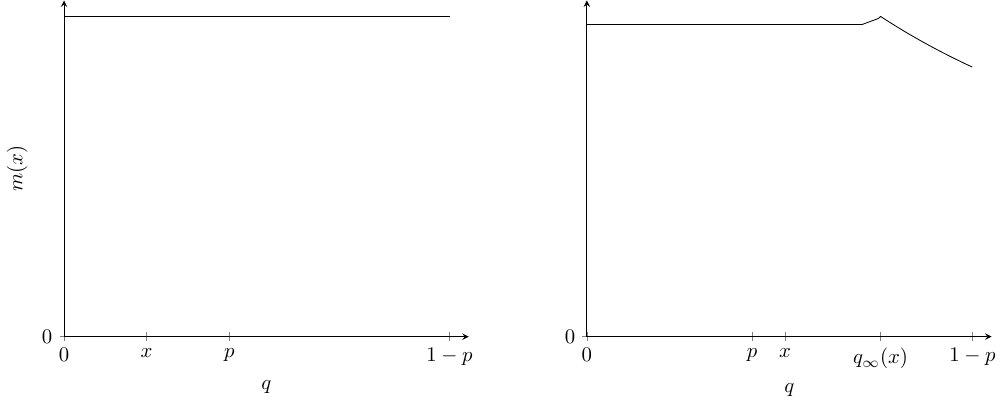}%
  \caption{Dependence of \(m(x)\) on the spontaneous recovery probability \(q\) for fixed \(x\).}
  \label{fig:mpvsq2x}
\end{figure}

The nonmonotonicity reflects two effects of passive recovery. In intermediate belief regions, higher \(q\) can raise marginal productivity by improving the post-passive trajectory; in high-belief regions, passivity becomes a closer substitute for intervention and the index falls.

\rev{
\section{Long-run average criterion: Whittle indexability}
\label{s:iatac}

This section extends the single-patient analysis from the discounted to the long-run average criterion. We apply the long-run average PCL framework of \citet{nmwp0726}, using its verification and discounted-to-average transfer theorems \citep[Theorems~3.6 and~4.5]{nmwp0726}. We use those results as criterion-agnostic tools and check only the model-specific ingredients: average threshold metrics, bias-marginal metrics, marginal-work positivity, index regularity, and PCL identities.

We use ``Abelian'' limits for quantities obtained from discounted metrics as
\(\beta\uparrow1\). For comparison, the ordinary, or non-Abelian, long-run
averages are obtained by fixing a threshold policy first and then taking
\(T\to\infty\) in the Ces\`aro time averages
\[
\frac1T\mathbb E_x^\pi\!\left[\sum_{t=0}^{T-1}R(X(t),A(t))\right],
\qquad
\frac1T\mathbb E_x^\pi\!\left[\sum_{t=0}^{T-1}A(t)\right].
\]
In the present deterministic model on \(\mathsf X^{\mathrm{inv}}\), these ordinary
averages exist and agree with the Abelian limits below. The reason is explicit.
If \(z<p\), the threshold policy is eventually active in every period. If
\(p\leqslant z<z_\infty\), with \(z_{s-1}\leqslant z<z_s\), then after any initial transient the
policy repeats the deterministic pattern
\[
p=z_0 \xrightarrow{0} z_1 \xrightarrow{0}\cdots
\xrightarrow{0} z_s \xrightarrow{1} p,
\]
where \(0\) denotes passivity and \(1\) denotes activation. Thus the ordinary average
reward and work are the reward and work per cycle divided by the cycle length
\(s+1\). If \(z=z_\infty\), the policy is all-passive on the core and the beliefs
converge to \(z_\infty\). The Abelian representation is nevertheless useful because
it connects these ordinary time averages directly to the discounted PCL metrics and
to the bias-level marginal quantities used below.

To keep the notation parallel with the discounted conditions \textup{(PCLI1--PCLI3)}, we label the average marginal-work positivity condition as \textup{(APCLI1)} and the monotonicity and continuity condition for \(m^{\mathrm{avg}}\) as \textup{(APCLI2)}. The discounted LS identity \textup{(PCLI3)} splits in the average framework into two identities: \textup{(APCLI3a)}, an average reward--work identity for threshold gain metrics, and \textup{(APCLI3b)}, a bias-marginal identity used for the average marginal sign characterization. Proposition~\ref{pro:avg-pcl-verification} verifies these average PCL conditions.

For the average criterion the relevant state space is the invariant core
\begin{equation}
\label{eq:avg-core-state-space}
\mathsf X^{\mathrm{inv}}=[0,z_\infty].
\end{equation}
Activation resets to \(p\) and passivity maps \([0,z_\infty]\) into itself. Beliefs \(x>z_\infty\) are transient for average gains and affect only bias-sensitive rankings. The corresponding threshold set is
\begin{equation}
\label{eq:avg-core-threshold-space}
\mathsf Z^{\mathrm{inv}}=(-\infty,z_\infty],
\end{equation}
where \(z<0\) is all-active and \(z=z_\infty\) is all-passive on the core.

For a single patient and price \(\lambda\), define
\begin{equation}
\label{eq:avg-lambda-problem}
J_\lambda(x,\pi)\triangleq
\liminf_{T\to\infty}\frac1T
\mathbb E_x^\pi\!\left[
\sum_{t=0}^{T-1}\{R(X(t),A(t))-\lambda A(t)\}
\right],
\qquad x\in\mathsf X^{\mathrm{inv}},
\end{equation}
and \(J_\lambda^*(x)=\sup_{\pi\in\Pi}J_\lambda(x,\pi)\).

\begin{proposition}[Average-reward optimality equations]
\label{pro:avg-acoe}
For every \(\lambda\in\mathbb R\), the average \(\lambda\)-price problem on \(\mathsf X^{\mathrm{inv}}\) satisfies the average optimality and optimal-action characterization in \citep[Assumption~4.1]{nmwp0726}. Equivalently, there exist a state-independent optimal gain \(\eta_\lambda\) and a continuous bias \(b_\lambda\) satisfying
\begin{equation}
\label{eq:avg-acoe-adherence}
\eta_\lambda+b_\lambda(x)=
\max\{r-\lambda+b_\lambda(p),\ r(1-x)+b_\lambda(h(x))\},
\qquad x\in\mathsf X^{\mathrm{inv}},
\end{equation}
and stationary deterministic maximizers are average-optimal.
\end{proposition}
The proof in Appendix~D verifies the compact-core Lipschitz and vanishing-discount prerequisites in \citet[Lemma~4.2]{nmwp0726}.

\subsection{Average reward and work under threshold policies}
\label{s:lrarwlda}

Let \(F_\beta(x,z)\) and \(G_\beta(x,z)\) be the discounted threshold metrics of Section~\ref{s:ewrm}. For \(x\in\mathsf X^{\mathrm{inv}}\) and \(z\in\mathsf Z^{\mathrm{inv}}\), the long-run average rates are the state-independent Abelian limits
\[
F^{\mathrm{avg}}(z)=\lim_{\beta\nearrow1}(1-\beta)F_\beta(x,z),
\qquad
G^{\mathrm{avg}}(z)=\lim_{\beta\nearrow1}(1-\beta)G_\beta(x,z).
\]
Define the undiscounted passive sum (cf.\ \eqref{eq:Phi-t-p-sum})
\begin{equation}
\label{eq:undPhi-t-p-sum}
\bar\Phi_t(x)\triangleq\sum_{j=0}^{t-1}(1-h_j(x))
=t(1-z_\infty)+\frac{1-\rho^t}{1-\rho}(z_\infty-x).
\end{equation}

\begin{proposition}[Average threshold metrics on the core]
\label{pro:avg-rwm}
For \(z\in\mathsf Z^{\mathrm{inv}}\),
\begin{enumerate}[label=(\alph*)]
\item if \(z<p\), then \(F^{\mathrm{avg}}(z)=r\) and \(G^{\mathrm{avg}}(z)=1\);
\item if \(p\leqslant z<z_\infty\) and \(z_{s-1}\leqslant z<z_s\), then
\[
F^{\mathrm{avg}}(z)=r\frac{1+\bar\Phi_s(p)}{s+1},
\qquad
G^{\mathrm{avg}}(z)=\frac1{s+1};
\]
\item if \(z=z_\infty\), then \(F^{\mathrm{avg}}(z_\infty)=r(1-z_\infty)\) and \(G^{\mathrm{avg}}(z_\infty)=0\).
\end{enumerate}
\end{proposition}
The proof in Appendix~D gives the Abelian/regenerative calculation.

\subsection{Average marginal metrics and MP index on the core}
\label{s:lramrwa}

Let \(f_\beta\) and \(g_\beta\) be the discounted marginal metrics in Section~\ref{s:edmrwm}. Their average counterparts are the finite Abelian bias limits
\[
f^{\mathrm{avg}}(x,z)=\lim_{\beta\nearrow1}f_\beta(x,z),
\qquad
g^{\mathrm{avg}}(x,z)=\lim_{\beta\nearrow1}g_\beta(x,z),
\]
for \(x\in\mathsf X^{\mathrm{inv}}\), \(z\in\mathsf Z^{\mathrm{inv}}\), as in \citet[Sec.~4.2]{nmwp0726}.

The marginal quantities \(f^{\mathrm{avg}}\) and \(g^{\mathrm{avg}}\) are
bias-level quantities, not ordinary differences of long-run average rates between the
two one-step-deviation policies \(\langle1,z\rangle\) and \(\langle0,z\rangle\).
After the initial action, both policies follow the same \(z\)-threshold rule. Hence,
for \(z<p\), both continuations eventually become all-active; for
\(p\leqslant z<z_\infty\), both continuations eventually enter the same deterministic
threshold pattern displayed above; and for \(z=z_\infty\), both continuations are
passive and converge to \(z_\infty\). Therefore their ordinary long-run average
reward and work rates are the same, so subtracting ordinary average rates would lose
the transient effect of the initial action.

The unscaled Abelian limits retain that transient effect. For fixed \(z\), the discounted
cumulative metrics have expansions of the form
\[
F_\beta(x,z)
=
\frac{F^{\mathrm{avg}}(z)}{1-\beta}
+
B_F(x,z)+o(1),
\qquad
G_\beta(x,z)
=
\frac{G^{\mathrm{avg}}(z)}{1-\beta}
+
B_G(x,z)+o(1),
\]
where the leading \(1/(1-\beta)\) terms are the discounted cumulative reward and work
contributions associated with the common long-run average rates, and \(B_F,B_G\)
are finite bias terms. In the one-step comparisons defining \(f_\beta\) and \(g_\beta\),
the common leading cumulative reward/work terms cancel, leaving a finite difference
of bias terms. This is why \(F^{\mathrm{avg}}\) and \(G^{\mathrm{avg}}\) are defined by
\((1-\beta)\)-scaled Abelian limits, whereas \(f^{\mathrm{avg}}\) and \(g^{\mathrm{avg}}\)
are defined by unscaled Abelian limits.

The following result gives closed-form expressions for the average marginal metrics
\(f^{\mathrm{avg}}\) and \(g^{\mathrm{avg}}\).

\begin{proposition}[Average marginal reward and work]
\label{pro:avg-mrwm}
For \(x\in\mathsf X^{\mathrm{inv}}\) and \(z\in\mathsf Z^{\mathrm{inv}}\),
\begin{enumerate}[label=(\alph*)]
\item if \(z<p\), then \(f^{\mathrm{avg}}(x,z)=rx\) and \(g^{\mathrm{avg}}(x,z)=1\);
\item if \(p\leqslant z<z_\infty\), \(z_{s-1}\leqslant z<z_s\), then
\begin{enumerate}[label=(\roman*)]
\item for \(x\leqslant z\) and \(t=\tau(x,z)\),
\[
f^{\mathrm{avg}}(x,z)=r\left[\frac{t}{s+1}\{1+\bar\Phi_s(p)\}-\bar\Phi_t(x)\right],
\qquad
g^{\mathrm{avg}}(x,z)=\frac{t}{s+1};
\]
\item for \(x>z\), \(f^{\mathrm{avg}}(x,z)=r(x-1)+F^{\mathrm{avg}}(z)\) and \(g^{\mathrm{avg}}(x,z)=1/(s+1)\);
\end{enumerate}
\item if \(z=z_\infty\), then \(f^{\mathrm{avg}}(x,z_\infty)=rx/(1-\rho)\) and \(g^{\mathrm{avg}}(x,z_\infty)=1\).
\end{enumerate}
In particular,
\begin{equation}
\label{eq:avg-apcli1-core}
g^{\mathrm{avg}}(x,z)>0,
\qquad x\in\mathsf X^{\mathrm{inv}},\ z\in\mathsf Z^{\mathrm{inv}}.
\end{equation}
\end{proposition}
The proof is in Appendix~D.

\begin{remark}[Why the core restriction is needed]
\label{rem:avg-core-zero-branch}
On the full interval \([0,1]\), thresholds \(z_\infty\leqslant z<1\) have an off-core branch \(x>z\) with \(h(x)>z\), where the Abelian marginal-work limit is zero: activating now and waiting one period before activation produce the same eventual intervention shifted by one period. Thus the full interval violates the marginal-resource positivity condition in \citet[Definition~4]{nmwp0726}; on the invariant core this branch is absent and \eqref{eq:avg-apcli1-core} holds.
\end{remark}

Define \(m^{\mathrm{avg}}(x,z)=f^{\mathrm{avg}}(x,z)/g^{\mathrm{avg}}(x,z)\) and \(m^{\mathrm{avg}}(x)=m^{\mathrm{avg}}(x,x)\) on the core.

\begin{proposition}[Average MP index on the core]
\label{pro:avg-mpi-explicit}
The average MP index on \(\mathsf X^{\mathrm{inv}}\) is
\[
m^{\mathrm{avg}}(x)=
\begin{cases}
rx, & 0\leqslant x<p,\\[5pt]
r\bigl[(t+1)x+\bar\Phi_t(p)-t\bigr],
  & z_{t-1}\leqslant x<z_t,
    \quad t=1,2,\ldots,\\[7pt]
\dfrac{r z_\infty}{1-\rho}, & x=z_\infty.
\end{cases}
\]
Moreover, \(m^{\mathrm{avg}}\) is continuous and strictly increasing on \(\mathsf X^{\mathrm{inv}}\).
\end{proposition}
The proof is in Appendix~D, and hence \textup{(APCLI2)} in \citet[Definition~4]{nmwp0726} holds with no exceptional core states.

\begin{remark}[Transient-state extension]
\label{rem:avg-transient-extension}
The PCL verification is restricted to the core. For transient initial beliefs \(x>z_\infty\), the bias-sensitive vanishing-discount extension is \(\widetilde m^{\mathrm{avg}}(x)=\lim_{\beta\nearrow1}m_\beta(x)=rx/(1-\rho)\), \(z_\infty\leqslant x\le1\), which agrees with \(m^{\mathrm{avg}}\) at \(z_\infty\).
\end{remark}

\subsection{Long-run average PCL-indexability verification}
\label{s:avg-pcl-conditions}

In the following LS identities, \(\mathrm{d}G^{\mathrm{avg}}(z)\) is generated by the right-continuous step function \(G^{\mathrm{avg}}\), and \(\mathrm{d}m^{\mathrm{avg}}(z)\) by the continuous increasing function \(m^{\mathrm{avg}}\), extended outside the core by endpoint constants; see \citet[Ch.~X]{doob94}.

\begin{proposition}[Average PCL-indexability conditions]
\label{pro:avg-pcl-verification}
On \(\mathsf X^{\mathrm{inv}}\), with threshold set \(\mathsf Z^{\mathrm{inv}}\), the following average PCL conditions hold, with no exceptional core states.

\smallskip\noindent\textup{(APCLI1)} Marginal-work positivity holds: \(g^{\mathrm{avg}}(x,z)>0\) for \(x\in\mathsf X^{\mathrm{inv}}\) and \(z\in\mathsf Z^{\mathrm{inv}}\), as stated in \eqref{eq:avg-apcli1-core}.

\smallskip\noindent\textup{(APCLI2)} The average MP index \(m^{\mathrm{avg}}\) is continuous and strictly increasing on \(\mathsf X^{\mathrm{inv}}\), by Proposition~\ref{pro:avg-mpi-explicit}.

\smallskip\noindent\textup{(APCLI3a)} The average reward--work LS identity holds: for finite \(z_1<z_2\leqslant z_\infty\),
\begin{equation}
\label{eq:avg-apcli3a}
F^{\mathrm{avg}}(z_2)-F^{\mathrm{avg}}(z_1)
=
\int_{(z_1,z_2]}m^{\mathrm{avg}}(z)\,\mathrm{d}G^{\mathrm{avg}}(z).
\end{equation}

\smallskip\noindent\textup{(APCLI3b)} The bias-marginal LS identity holds: for \(x,z\in\mathsf X^{\mathrm{inv}}\), with
\(D_x(z)=f^{\mathrm{avg}}(x,z)-m^{\mathrm{avg}}(z)g^{\mathrm{avg}}(x,z)\),
\begin{equation}
\label{eq:avg-apcli3b}
D_x(z)=
\begin{cases}
\displaystyle \int_{(z,x)}g^{\mathrm{avg}}(x,y)\,\mathrm{d}m^{\mathrm{avg}}(y), & z<x,\\[2ex]
\displaystyle -\int_{[x,z]}g^{\mathrm{avg}}(x,y)\,\mathrm{d}m^{\mathrm{avg}}(y), & x\leqslant z.
\end{cases}
\end{equation}

Consequently, the single-patient long-run average model on \(\mathsf X^{\mathrm{inv}}\) is PCL-indexable with respect to threshold policies in the sense of \citet[Definition~4]{nmwp0726}. Moreover, the average marginal sign characterization is
\begin{equation}
\label{eq:avg-marginal-sign}
\sgn\!\left(f^{\mathrm{avg}}(x,z)-m^{\mathrm{avg}}(z)g^{\mathrm{avg}}(x,z)\right)
=
\sgn\!\left(m^{\mathrm{avg}}(x)-m^{\mathrm{avg}}(z)\right),
\qquad x,z\in\mathsf X^{\mathrm{inv}}.
\end{equation}
\end{proposition}
The proof in Appendix~D verifies \textup{(APCLI3a)} and \textup{(APCLI3b)} from the step-function and Abelian-limit formulas; the sign identity follows from \citet[Lemma~3.4]{nmwp0726}.

\begin{theorem}[Average-criterion threshold-indexability on the invariant core]
\label{the:avg-indexability}
Under the long-run average criterion, the single-patient adherence project restricted to \(\mathsf X^{\mathrm{inv}}=[0,z_\infty]\) is threshold-indexable. Its Whittle index on the core is \(m^{\mathrm{avg}}(x)\).
\end{theorem}
The proof, in Appendix~D, applies \citet[Theorem~4.5]{nmwp0726} using Propositions~\ref{pro:avg-acoe}--\ref{pro:avg-pcl-verification} and the discounted PCL-indexability result in Theorem~\ref{the:adherence-indexability}.

}

\rev{
\section{Numerical study}
\label{s:numerics}

This section assesses Whittle's index policy against five benchmarks and uses the Lagrangian dual bound to quantify optimality gaps. We compare Whittle, myopic, round-robin, Lagrangian index, forced-capacity Lagrangian index, and passive policies. Whittle uses \(w_n=m_n\); myopic ranks by \(r_nx\); round-robin cycles through patients; passive never intervenes. The standard Lagrangian rule activates positive \(d_n^*\)'s up to capacity, while the forced-capacity rule activates the \(M\) largest \(d_n^*\)'s. Both use the PCL marginal metrics and \(\lambda^*\) from Sections~\ref{s:iwip} and~\ref{s:bisection}.

For each policy \(\pi\), initial beliefs are i.i.d. \(\mathrm{Unif}[0,1]\), and performance is reported as
\[
\bar V^\pi\triangleq\frac{1-\beta}{N}\,
\mathbb E^\pi\!\left[\sum_{t=0}^{\infty}\sum_{n=1}^N R_n(X_n(t),A_n(t))\beta^t\right].
\]
Policy values use 300 Monte Carlo runs and truncation horizon \(T=700\). With \(\beta=0.99\), the omitted tail factor is \(\beta^{700}\approx8.8\times10^{-4}\), so relative gaps are not materially driven by truncation. Let \(D=\inf_{\lambda\geqslant 0}L(\lambda)\) be the population-averaged dual bound computed as in Section~\ref{s:dual-bound}, set \(\bar D=(1-\beta)D/N\), and define
\[
\gamma^\pi\triangleq\frac{\bar D-\bar V^\pi}{\bar D},
\]
which upper-bounds the true relative optimality gap.

\subsection{Experimental design}

We use three experiment families. The first two are low-\(q\) stress regimes, where spontaneous recovery is limited and allocation matters most; the third adds continuous within-cell heterogeneity.

\paragraph{Experiment 1: two-type stress regime.}
We use four low-\(q\) \(A/B\) population specifications. In each, \((p^A,q^A)\!-
\!(p^B,q^B)\) gives the transition parameters for the more adherence-persistent type \(A\) and the more lapse-prone type \(B\):
\[
\begin{array}{@{}ll@{}}
(0.02,0.03)\!-\!(0.20,0.01), &
(0.05,0.03)\!-\!(0.30,0.01),\\[2pt]
(0.05,0.01)\!-\!(0.35,0.01), &
(0.10,0.03)\!-\!(0.35,0.01).
\end{array}
\]
We vary \(\mathrm{prop}_A\in\{0.5,0.7,0.9\}\), \((r^A,r^B)\in\{(1,1),(1,2),(2,1),(2,2)\}\), \(M/N\in\{0.05,0.10,0.20\}\), and \(N\in\{100,200,400,700,1000\}\), giving \(4\times3\times4\times3\times5=720\) instances.

\paragraph{Experiment 2: three-type stress regime.}
We use three ordered low-\(q\) \(A/B/C\) specifications; each row gives \((p^A,q^A),(p^B,q^B),(p^C,q^C)\), from more adherence-persistent to more lapse-prone:
\[
\begin{array}{ccc}
(0.02,0.03) & (0.10,0.02) & (0.25,0.01)\\
(0.05,0.03) & (0.15,0.02) & (0.30,0.01)\\
(0.05,0.01) & (0.15,0.01) & (0.35,0.01).
\end{array}
\]
We vary the \(A/B/C\) share vector over \((0.7,0.2,0.1)\), \((0.6,0.3,0.1)\), and \((0.5,0.3,0.2)\); the reward vector over \((r^A,r^B,r^C)\in\{(1,1,1),(1,1,2),(1,2,1),(1,2,2)\}\); and again \(M/N\) and \(N\) over the above grids. This gives \(3\times3\times4\times3\times5=540\) instances.

\paragraph{Experiment 3: jittered finite-mixture population.}
We use three dynamic archetypes \(A,B,C\) with base parameters \((\bar p^A,\bar q^A)=(0.03,0.03)\), \((\bar p^B,\bar q^B)=(0.10,0.02)\), and \((\bar p^C,\bar q^C)=(0.25,0.01)\). Crossing them with \(r\in\{1,2\}\) gives cells \((A,1),(A,2),(B,1),(B,2),(C,1),(C,2)\), with baseline and stress probabilities
\[
\omega^{\mathrm{base}}=(0.35,0.15,0.245,0.105,0.105,0.045),
\omega^{\mathrm{stress}}=(0.25,0.10,0.25,0.15,0.15,0.10).
\]
For each patient, we draw a cell, fix \(r_n\) by that cell, and if the selected dynamic archetype is \(k\), draw independently conditional on the cell
\[
p_n\sim\mathrm{Unif}[\bar p^k-0.01,\bar p^k+0.01],
\qquad
q_n\sim\mathrm{Unif}[\bar q^k-0.005,\bar q^k+0.005],
\]
with rejection-resampling to enforce \(p_n+q_n\le0.95\). Thus the \(A/B/C\) labels are centers of adherence dynamics, not exact patient types. We use the two mixture laws, ten seeded population draws for each law and \(N\), reuse each draw across \(M/N\in\{0.05,0.10,0.20\}\), and vary \(N\in\{100,200,400,700,1000\}\), giving \(2\times10\times3\times5=300\) instances.

\begin{table}[!htbp]
  \centering
  \caption{Summary of the three experiment families.}
  \label{tab:numerics-design}
  \small
  \setlength{\tabcolsep}{3pt}
  \begin{tabular}{@{}p{0.22\textwidth}p{0.42\textwidth}c p{0.24\textwidth}@{}}
    \hline
    Experiment & Heterogeneity model & Cases & Purpose \\
    \hline
    Two-type stress & Four low-\(q\) \(A/B\) specifications; four reward profiles & 720 & Mechanism and stress test \\
    Three-type stress & Three ordered low-\(q\) \(A/B/C\) specifications; four reward profiles & 540 & Richer discrete heterogeneity \\
    Jittered mixture & Three \(A/B/C\) dynamic archetypes crossed with \(r\in\{1,2\}\); two mixture laws; jittered \((p,q)\) & 300 & Continuous within-cell heterogeneity \\
    \hline
  \end{tabular}
\end{table}

Unless otherwise stated, each reported mean averages over all instances in the
corresponding experiment family, including the full population-size grid
\(N\in\{100,200,400,700,1000\}\). When a table conditions on one design factor,
such as \(M/N\), reward profile, composition, or mixture law, the mean is taken
over the remaining design factors, including \(N\). The \(N\)-grid is included as
a robustness check across cohort sizes, not as a separate scaling experiment.

The Online Supplement reports the full gap and reward-ratio distributions, worst myopic cases, and disaggregated capacity, reward-profile, composition, and mixture-law summaries.

\subsection{Summary results across experiment families}

Table~\ref{tab:gaps-three-experiments} reports mean bound-relative Lagrangian gaps. Whittle and forced-capacity Lagrangian index policies are the two best-performing policies, with mean gaps at most \(0.43\%\) and \(0.40\%\), respectively. The standard Lagrangian rule is weaker but remains well ahead of myopic prioritization; passive performs worst. Tables~1--3 of the Online Supplement give the full gap distributions and show maximum Whittle gaps of \(1.34\%\), \(1.09\%\), and \(0.61\%\) in Experiments 1--3.

\begin{table}[!htbp]
  \centering
  \caption{Mean relative Lagrangian gaps \(\gamma^\pi\) (in \%) by experiment family.}
  \label{tab:gaps-three-experiments}
  \begin{tabular}{lcccccc}
    \hline
    Experiment & Whittle & Myopic & Round-robin & Lagrangian & Forced Lag. & Passive \\
    \hline
    Two-type stress      & 0.403 & 10.100 & 3.586 & 1.950 & 0.388 & 61.635 \\
    Three-type stress    & 0.430 & 12.243 & 5.233 & 1.748 & 0.395 & 63.801 \\
    Jittered mixture     & 0.386 &  9.053 & 4.943 & 1.284 & 0.361 & 58.520 \\
    \hline
  \end{tabular}
\end{table}

For compactness, the ratio tables use \(\mathrm{Wh}\), \(\mathrm{my}\), \(\mathrm{Lag}\), and \(\mathrm{LagF}\) for Whittle, myopic, Lagrangian, and forced-capacity Lagrangian index policies. Table~\ref{tab:ratios-three-experiments} shows that Whittle improves mean reward over myopic by about \(12.4\%\), \(14.6\%\), and \(9.7\%\) in the three designs. Forced-capacity Lagrangian indexing is essentially tied with Whittle, while the standard Lagrangian rule trails Whittle modestly but systematically. Tables~4--6 of the Online Supplement give the corresponding ratio distributions.

\begin{table}[!htbp]
  \centering
  \caption{Mean and maximum reward ratios by experiment family.}
  \label{tab:ratios-three-experiments}
  \begin{tabular}{lcccc}
    \hline
    Experiment & \(\bar V^{\mathrm{Wh}}/\bar V^{\mathrm{my}}\) &
    \(\bar V^{\mathrm{Lag}}/\bar V^{\mathrm{Wh}}\) &
    \(\bar V^{\mathrm{LagF}}/\bar V^{\mathrm{Wh}}\) &
    \(\max \bar V^{\mathrm{Wh}}/\bar V^{\mathrm{my}}\) \\
    \hline
    Two-type stress      & 1.124 & 0.9845 & 1.0001 & 1.787 \\
    Three-type stress    & 1.146 & 0.9868 & 1.0004 & 1.690 \\
    Jittered mixture     & 1.097 & 0.9910 & 1.0002 & 1.166 \\
    \hline
  \end{tabular}
\end{table}

The largest Whittle gains occur in the curated stress designs, where myopic priorities can be severely misaligned, and the gains persist under jittered mixtures. Table~7 of the Online Supplement and its accompanying discussion document the ten worst myopic cases in Experiment~1; they concentrate on the \((0.05,0.01)\)-\((0.35,0.01)\) structural regime with balanced populations and \(M/N=0.10\), where myopic loses roughly one third of Whittle's reward.

\subsection{Where the Whittle index policy's advantage is strongest}

Table~\ref{tab:capacity-ratio-summary} shows the mean ratio \(\bar V^{\mathrm{Wh}}/\bar V^{\mathrm{my}}\) by capacity. In every experiment family, the ratio is largest at \(M/N=0.05\), smaller at \(0.10\), and smaller still at \(0.20\). Thus dynamic index-based prioritization matters more when intervention resources are scarcer; Online Supplement Tables~8--10 give the family-specific capacity breakdowns.

\begin{table}[!htbp]
  \centering
  \caption{Mean reward ratio \(\bar V^{\mathrm{Wh}}/\bar V^{\mathrm{my}}\) by capacity ratio.}
  \label{tab:capacity-ratio-summary}
  \begin{tabular}{lccc}
    \hline
    Experiment & \(M/N=0.05\) & \(M/N=0.10\) & \(M/N=0.20\) \\
    \hline
    Two-type stress      & 1.193 & 1.132 & 1.047 \\
    Three-type stress    & 1.211 & 1.163 & 1.063 \\
    Jittered mixture     & 1.132 & 1.110 & 1.048 \\
    \hline
  \end{tabular}
\end{table}

Reward heterogeneity in Experiment~1 is also informative. Online Supplement Table~11 gives reward-profile ratios: \(1.021\) for \((r^A,r^B)=(2,1)\), versus \(1.164\), \(1.149\), and \(1.164\) for \((1,1)\), \((1,2)\), and \((2,2)\). Recall that the myopic rule assigns priority \(r_n x\), the immediate expected reward gain from activation, whereas Whittle's policy assigns priority \(m_n(x)\), which accounts for future adherence dynamics. The smaller ratio for \((2,1)\) suggests that giving the larger reward to the more adherence-persistent type makes the myopic priority closer to the dynamic index priority; otherwise, myopic prioritization remains substantially worse. Online Supplement Table~12 shows that, in Experiment~2, the mean ratio
\(\bar V^{\mathrm{Wh}}/\bar V^{\mathrm{my}}\) remains similar across the three
population-share vectors, taking values \(1.148\), \(1.136\), and \(1.154\).
Online Supplement Table~13 gives the corresponding mixture-law breakdown for
Experiment~3, with mean ratios \(1.087\) under the baseline law and \(1.106\)
under the stress law. Thus the Whittle improvement over myopic prioritization
is not driven by a single three-type composition or by only one of the two
jittered-mixture laws.

}

\section{Conclusions}
\label{s:concl}

We developed a belief-state restless bandit model for capacity-constrained adherence outreach with reset-type interventions. Using the PCL approach in \cite{nmmor20}, we proved discounted threshold-indexability, derived a closed-form Whittle index, and characterized how priorities vary with lapse and spontaneous-recovery parameters.

\rev{
We also established the corresponding single-patient long-run average threshold-indexability result on the invariant core \(\mathsf X^{\mathrm{inv}}=[0,z_\infty]\). Using the PCL-based discounted-to-average transfer framework in \cite{nmwp0726}, we obtained an explicit average-criterion Whittle index and clarified why off-core transient states affect bias-sensitive rankings but not recurrent average reward/work rates.}

\rev{
The same PCL marginal metrics yielded a piecewise-affine single-patient Lagrangian value and computable Lagrangian index benchmarks. They also give a dual-bound computation running in \(O(N\log(1/\varepsilon))\) time.}

\rev{
Numerically, Whittle indexing substantially improves on myopic prioritization in curated low-\(q\) stress regimes, especially under tight capacity, and the gain persists under jittered finite-mixture heterogeneity. The forced-capacity Lagrangian index policy is nearly indistinguishable from Whittle's policy in value, while the standard Lagrangian index policy trails Whittle modestly but remains well ahead of myopic prioritization.}

Together, the explicit discounted and average-criterion indices, parameter analysis, computable bounds, and PCL-derived benchmarks support the design and evaluation of index policies in belief-state RMABs.

Future work includes extensions beyond the present reset-type model, such as richer latent adherence dynamics, partial observations under passivity, empirical calibration of transition and reward parameters, and additional objectives or constraints involving fairness, risk, or equity.

\section*{Statements and Declarations}

\subsection*{Funding}
This research did not receive any specific grant from funding agencies in the public, commercial, or not-for-profit sectors.
The first author's work was supported by an internal UC3M research grant (\emph{Programa Propio de Investigaci\'on}).

\subsection*{Competing Interests}
The authors declare that they have no competing interests.

\subsection*{Data availability}
No external data were used in this study. All numerical results are generated by
the simulation procedures described in the paper from specified parameter grids and
pseudo-random initial beliefs. The Online Supplement reports the full numerical
summaries used in the paper. The underlying simulation-output files are available
from the corresponding author upon request.


\clearpage
\section*{Online Supplement}
\begin{center}
{\large for ``A belief-state restless bandit model for treatment adherence: Whittle indexability via partial conservation laws''}
\end{center}
\vspace{1em}

\appendix
\setcounter{section}{0}
\setcounter{subsection}{0}
\renewcommand{\thesection}{\Alph{section}}
\renewcommand{\thesubsection}{\Alph{section}.\arabic{subsection}}

\setcounter{equation}{0}
\setcounter{table}{0}
\setcounter{figure}{0}
\setcounter{lemma}{0}
\setcounter{corollary}{0}
\renewcommand{\theequation}{S\arabic{equation}}

\providecommand{\theHsection}{}
\providecommand{\theHsubsection}{}
\providecommand{\theHequation}{}
\providecommand{\theHtable}{}
\providecommand{\theHfigure}{}
\providecommand{\theHlemma}{}
\providecommand{\theHcorollary}{}
\renewcommand{\theHsection}{supp.\Alph{section}}
\renewcommand{\theHsubsection}{supp.\Alph{section}.\arabic{subsection}}
\renewcommand{\theHequation}{supp.\arabic{equation}}
\renewcommand{\theHtable}{supp.\arabic{table}}
\renewcommand{\theHfigure}{supp.\arabic{figure}}
\renewcommand{\theHlemma}{supp.\arabic{lemma}}
\renewcommand{\theHcorollary}{supp.\arabic{corollary}}

\section{Proofs for Section 5 (Verification of PCL-indexability conditions)}
\label{app:avpclic}

\subsection{Proofs for Section 5.1 (Discounted reward and work metrics under threshold policies)}
\label{app:ewrm}

\begin{proof}[Proof of Proposition~\ref{pro:edrwm}]
\textup{(a) Case $z<p$.}
Fix $z<p$.

\medskip\noindent
\emph{Case 1: $0\leqslant x\leqslant z$.}
Since $h$ is increasing and $h(0)=p>z$, we have $h(x)\geqslant p>z$. Thus the policy is passive at $t=0$ and active from
$t=1$ onward. Using the functional equations \eqref{eq:Fxz}--\eqref{eq:Gxz} and the fact that whenever the policy is
active it resets the next belief to $p$, we obtain
\begin{align*}
F(x,z)& =r(1-x)+\beta F\bigl(h(x),z\bigr)
      =r(1-x)+\beta\bigl[r+\beta F(p,z)\bigr] \\
      & =r(1-x+\beta)+\beta^2F(p,z),
\end{align*}
and
\[
G(x,z)=\beta G\bigl(h(x),z\bigr)
      =\beta\bigl[1+\beta G(p,z)\bigr]
      =\beta+\beta^2G(p,z).
\]

\medskip\noindent
\emph{Case 2: $x>z$.}
The policy intervenes immediately at $t=0$, hence by \eqref{eq:Fxz}--\eqref{eq:Gxz}
\[
F(x,z)=r+\beta F(p,z),\quad
G(x,z)=1+\beta G(p,z).
\]

\medskip\noindent
\emph{Solving for $F(p,z)$ and $G(p,z)$.}
Since $p>z$, the point $x=p$ falls into Case 2, so
\[
F(p,z)=r+\beta F(p,z),\quad
G(p,z)=1+\beta G(p,z),
\]
which yields
\[
F(p,z)=\frac{r}{1-\beta},\quad
G(p,z)=\frac{1}{1-\beta}.
\]
Substituting these into the formulas in Cases 1--2 gives the stated expressions for $F(x,z)$ and $G(x,z)$ when $z<p$.

\medskip\noindent
\textup{(b) Case $p\leqslant z<z_\infty$.}
Assume $p\leqslant z<z_\infty$ and let $t=t(z)\in\{1,2,\ldots\}$ be the unique index such that
\[
z_{t-1}\leqslant z<z_t,
\quad\text{equivalently}\quad t=\tau(p,z),
\]
where $\tau(\cdot,\cdot)$ is the first crossing time defined in \eqref{eq:tauxz}.

If $x\leqslant z$, then the belief evolves passively along $h_s(x)$ until it first exceeds $z$, i.e., until time
$\tau(x,z)\in\{1,2,\ldots\}$, characterized by \eqref{eq:tau-characterization}. Up to time $\tau(x,z)-1$ the policy is
passive, and at time $\tau(x,z)$ it intervenes; thereafter, the post-intervention belief is $p$ and the continuation
value is the same as if the process started from $p$ at the next epoch. Using \eqref{eq:Fxz}--\eqref{eq:Gxz} and the
definitions of $\Phi_s(\cdot)$, we obtain for all $x\leqslant z$:
\[
G(x,z)=\beta^{\tau(x,z)}\Bigl[1+\beta\,G(p,z)\Bigr],
\quad
F(x,z)=r\,\Phi_{\tau(x,z)}(x)+\beta^{\tau(x,z)}\Bigl[r+\beta\,F(p,z)\Bigr].
\]

If $x>z$, then $\tau(x,z)=0$ (cf.\ \eqref{eq:tauxz}) and the policy intervenes immediately at time $0$, so
\[
F(x,z)=r+\beta F(p,z),\quad
G(x,z)=1+\beta G(p,z).
\]

It remains to determine $F(p,z)$ and $G(p,z)$ in terms of $t=\tau(p,z)$. Since $p\leqslant z$, we are in the case $x\leqslant z$
above, so substituting $x=p$ and $\tau(p,z)=t$ gives
\[
G(p,z)=\beta^{t}\Bigl[1+\beta\,G(p,z)\Bigr],\quad
F(p,z)=r\,\Phi_t(p)+\beta^{t}\Bigl[r+\beta\,F(p,z)\Bigr].
\]
Solving the resulting linear equations yields
\[
G(p,z)=\frac{\beta^{t}}{1-\beta^{t+1}},
\quad
F(p,z)=r\,\frac{\Phi_t(p)+\beta^{t}}{1-\beta^{t+1}}.
\]
Substituting these into the general formulas for $x\leqslant z$ and $x>z$ yields the stated case-(b) expressions.

\medskip\noindent
\textup{(c) Case $z_\infty\leqslant z<1$.}
If $x\leqslant z$, then the passive belief trajectory $h_t(x)$ is increasing and converges to $z_\infty\leqslant z$, hence it never
exceeds the threshold and no interventions ever occur. Therefore,
\[
G(x,z)=0,
\quad
F(x,z)=\sum_{t=0}^\infty r\bigl(1-h_t(x)\bigr)\beta^t
      =r\,\Phi_\infty(x).
\]
If $x>z$, an intervention is taken at $t=0$ and the next belief is $p\leqslant z$, so no further interventions occur:
\[
G(x,z)=1+\beta G(p,z)=1,
\quad
F(x,z)=r+\beta F(p,z)=r\bigl[1+\beta\,\Phi_\infty(p)\bigr].
\]

\medskip\noindent
\textup{(d) Case $z\geqslant 1$.}
For any $x\in[0,1]$ we have $x\leqslant z$, so the threshold is never crossed and the system remains passive forever. Hence
$G(x,z)=0$ and
\[
F(x,z)
=\sum_{t=0}^{\infty} r\bigl(1-h_t(x)\bigr)\beta^t
=r\,\Phi_\infty(x).
\]
This completes the proof.
\end{proof}

\subsection{Proofs for Section 5.2 (Discounted marginal metrics and condition (PCLI1))}
\label{app:dmrwm}

\begin{proof}[Proof of Proposition~\ref{pro:dmrwm}]
For any $z$ and $x$, under the one-step-deviation policies $\langle a,z\rangle$,
\[
F\bigl(x,\langle 1,z\rangle\bigr)=r+\beta F(p,z),
\quad
F\bigl(x,\langle 0,z\rangle\bigr)=r(1-x)+\beta F\bigl(h(x),z\bigr),
\]
\[
G\bigl(x,\langle 1,z\rangle\bigr)=1+\beta G(p,z),
\quad
G\bigl(x,\langle 0,z\rangle\bigr)=\beta G\bigl(h(x),z\bigr),
\]
so
\begin{equation}\label{eq:fg-Qform-simpl}
f(x,z)=rx+\beta\bigl(F(p,z)-F(h(x),z)\bigr),
\quad
g(x,z)=1+\beta\bigl(G(p,z)-G(h(x),z)\bigr).
\end{equation}
We evaluate $F(h(x),z)$ and $G(h(x),z)$ using Proposition~\ref{pro:edrwm}.

\smallskip\noindent
\emph{(a) Case $z<p$.}
Then $h(x)\geqslant p>z$ for all $x$, and Proposition~\ref{pro:edrwm}\textup{(a)} gives
$F(p,z)=F(h(x),z)=r/(1-\beta)$ and $G(p,z)=G(h(x),z)=1/(1-\beta)$.
Substituting into \eqref{eq:fg-Qform-simpl} yields $f(x,z)=rx$ and $g(x,z)=1$.

\smallskip\noindent
\emph{(b) Case $p\leqslant z<z_\infty$.}
Let $t=\tau(p,z)$ and define
\[
K_F(z)\triangleq r+\beta F(p,z),
\quad
K_G(z)\triangleq 1+\beta G(p,z).
\]
For any $y>z$, Proposition~\ref{pro:edrwm}\textup{(b)} yields $F(y,z)=K_F(z)$ and $G(y,z)=K_G(z)$.

\smallskip\noindent
\emph{(b.1) Subcase $x\leqslant z$.}
Let $s=\tau(x,z)$. Since $h$ is increasing and $h_0(x)=x\leqslant z$, we have
\[
\tau\bigl(h(x),z\bigr)=s-1,
\]
because $h_{s-1}(h(x))=h_s(x)>z$ and $h_{s-2}(h(x))=h_{s-1}(x)\leqslant z$ (for $s\geqslant 2$; if $s=1$ then $h(x)>z$ and
$\tau(h(x),z)=0$ as well). Applying Proposition~\ref{pro:edrwm}\textup{(b)} with initial state $h(x)$ and crossing time
$s-1$ gives
\[
F(h(x),z)=r\,\Phi_{s-1}\!\bigl(h(x)\bigr)+\beta^{s-1}K_F(z),
\quad
G(h(x),z)=\beta^{s-1}K_G(z).
\]
Using \eqref{eq:fg-Qform-simpl} and the identity
$r x+r\beta\,\Phi_{s-1}(h(x))=r\,\Phi_s(x)$, we obtain
\[
f(x,z)=(1-\beta^s)K_F(z)-r\,\Phi_s(x),
\quad
g(x,z)=(1-\beta^s)K_G(z),
\]
which yields the $x\leqslant z$ formulas in part \textup{(b)}.

\smallskip\noindent
\emph{(b.2) Subcase $x>z$.}
Since $z<z_\infty$, we have $h(z)>z$ and $h$ is increasing, hence $h(x)>z$.
Thus $F(h(x),z)=K_F(z)$ and $G(h(x),z)=K_G(z)$. Substituting into \eqref{eq:fg-Qform-simpl} gives
\[
f(x,z)=rx+\beta\bigl(F(p,z)-K_F(z)\bigr)=r(x-1)+(1-\beta)\,K_F(z),
\]
\[
g(x,z)=1+\beta\bigl(G(p,z)-K_G(z)\bigr)=(1-\beta)\,K_G(z).
\]

\smallskip\noindent
\emph{(c) Case $z_\infty\leqslant z<1$.}
Here Proposition~\ref{pro:edrwm}\textup{(c)} yields $F(p,z)=r\Phi_\infty(p)$ and $G(p,z)=0$; define also
\[
K_F(z)\triangleq r+\beta F(p,z)=r\bigl[1+\beta\Phi_\infty(p)\bigr].
\]
Moreover, for any $y$,
\[
F(y,z)=
\begin{cases}
r\Phi_\infty(y), & y\leqslant z,\\
K_F(z), & y>z,
\end{cases}
\quad
G(y,z)=
\begin{cases}
0, & y\leqslant z,\\
1, & y>z.
\end{cases}
\]

\smallskip\noindent
\emph{(c.1) Subcase $h(x)\leqslant z$.}
Then $F(h(x),z)=r\Phi_\infty(h(x))$ and $G(h(x),z)=0$, so \eqref{eq:fg-Qform-simpl} gives
\[
g(x,z)=1,
\quad
f(x,z)=rx+\beta r\Bigl[\Phi_\infty(p)-\Phi_\infty\bigl(h(x)\bigr)\Bigr].
\]
Using the closed form
\[
\Phi_\infty(u)=\frac{1-z_\infty}{1-\beta}+\frac{z_\infty-u}{1-\beta\rho},
\quad u\in[0,1],
\]
we have
$\Phi_\infty(p)-\Phi_\infty(h(x))=(h(x)-p)/(1-\beta\rho)=\rho x/(1-\beta\rho)$, and hence
\[
f(x,z)=rx+\beta r\frac{\rho x}{1-\beta\rho}=\frac{r\,x}{1-\beta\rho}.
\]

\smallskip\noindent
\emph{(c.2) Subcase $x>z$ and $h(x)>z$.}
Then $F(h(x),z)=K_F(z)$ and $G(h(x),z)=1$. Since $G(p,z)=0$, \eqref{eq:fg-Qform-simpl} yields
\[
g(x,z)=1-\beta,
\quad
f(x,z)=rx+\beta\bigl(r\Phi_\infty(p)-K_F(z)\bigr)
      =r(x-1)+(1-\beta)\,K_F(z),
\]
using $K_F(z)=r\bigl[1+\beta\Phi_\infty(p)\bigr]$.

\smallskip\noindent
\emph{(d) Case $z\geqslant 1$.}
Then the $z$-threshold policy is always passive, so $F(y,z)=r\Phi_\infty(y)$ and $G(y,z)=0$ for all $y$.
Thus $G(p,z)=G(h(x),z)=0$ and $g(x,z)=1$. Also $F(p,z)=r\Phi_\infty(p)$ and $F(h(x),z)=r\Phi_\infty(h(x))$, so the same
calculation as in \textup{(c.1)} gives
\[
f(x,z)=rx+\beta r\frac{\rho x}{1-\beta\rho}=\frac{r\,x}{1-\beta\rho}.
\]
This completes the proof.
\end{proof}

\subsection{Proofs for Section 5.3 (MP index and condition (PCLI2)) }
\label{app:pcli2}
\begin{proof}[Proof of Lemma~\ref{lma:pcli2}]
We first prove continuity, then strict monotonicity.

\medskip\noindent
\emph{1. Continuity.}
By Proposition~\ref{pro:dmpi-explicit}, $m$ is affine on each open interval
\[
(0,p),\quad (z_{t-1},z_t)\ (t\geqslant 1),\quad (z_\infty,1),
\]
hence continuous there. Thus, continuity can only fail at the breakpoints
\[
x=p,\quad x=z_t\ (t\geqslant 1),\quad x=z_\infty,\quad x=1.
\]

\smallskip\noindent
\emph{(a) Continuity at $x=p$.}
For $0\leqslant x<p$, $m(x)=rx$, so $\lim_{x\nearrow p}m(x)=rp$.
Since $p=z_0$, the middle branch with $t=1$ applies at $x=p$:
\[
m(x)=\frac{r}{1-\beta}\Bigl[(1-\beta^{2})x+\beta^{2}-\beta+\beta(1-\beta)\Phi_1(p)\Bigr],
\quad z_0\leqslant x<z_1,
\]
and $\Phi_1(p)=1-z_0=1-p$. Substituting $x=p$ and $\Phi_1(p)=1-p$ gives $m(p)=rp$,
so $m$ is continuous at $x=p$.

\smallskip\noindent
\emph{(b) Continuity at internal breakpoints $x=z_t$, $t\geqslant 1$.}
For $x\in[z_{t-1},z_t)$ the $t$-th middle branch applies:
\[
m_t(x)\triangleq
\frac{r}{1-\beta}\Bigl[(1-\beta^{t+1})x+\beta^{t+1}-\beta+\beta(1-\beta)\Phi_t(p)\Bigr],
\]
whereas for $x\in[z_t,z_{t+1})$ the $(t+1)$-st branch applies:
\[
m_{t+1}(x)=
\frac{r}{1-\beta}\Bigl[(1-\beta^{t+2})x+\beta^{t+2}-\beta+\beta(1-\beta)\Phi_{t+1}(p)\Bigr].
\]
We show $m_t(z_t)=m_{t+1}(z_t)$. Using
\[
\Phi_{t+1}(p)=\Phi_t(p)+(1-z_t)\beta^t,
\]
and substituting $x=z_t$ into both expressions, a direct simplification yields equality.
Hence neighboring middle branches match at each $z_t$, so $m$ is continuous at $x=z_t$.

\smallskip\noindent
\emph{(c) Continuity at $x=z_\infty$.}
Since $z_t\nearrow z_\infty$, we have
\[
m(z_\infty^-)=\lim_{t\to\infty} m(z_t)=\lim_{t\to\infty} m_t(z_t),
\]
where the last equality uses part (b). From the last branch,
\[
m(z_\infty^+)=\frac{r\,z_\infty}{1-\beta\rho}.
\]
Taking $t\to\infty$ in the explicit expression for $m_t(z_t)$ (using the closed forms for
$z_t$ and $\Phi_t(p)$ and the definition of $\Phi_\infty(\cdot)$) gives
\[
\lim_{t\to\infty} m_t(z_t)=\frac{r\,z_\infty}{1-\beta\rho}.
\]
Therefore $m(z_\infty^-)=m(z_\infty^+)$, and $m$ is continuous at $x=z_\infty$.

\smallskip\noindent
\emph{(d) Continuity at $x=1$.}
For $x$ near $1$ only the last branch applies, $m(x)=r x/(1-\beta\rho)$, which is affine and hence
continuous at $x=1$.

Combining (a)--(d), $m$ is continuous on $[0,1]$.

\medskip\noindent
\emph{2. Strict monotonicity.}
On each interval where $m$ is affine, its slope is positive:
\begin{itemize}[leftmargin=*]
\item On $(0,p)$, $m(x)=rx$, so $m'(x)=r>0$.
\item On $(z_{t-1},z_t)$ for $t\ge1$, the $t$-th middle branch has slope
\[
m'(x)=\frac{r}{1-\beta}\,(1-\beta^{t+1})>0.
\]
\item On $(z_\infty,1)$, $m(x)=r x/(1-\beta\rho)$, so
\[
m'(x)=\frac{r}{1-\beta\rho}>0
\]
since $0<\beta\rho<1$.
\end{itemize}
Thus $m$ is increasing on the interior of each interval
\[
[0,p),\quad [z_{t-1},z_t)\ (t\ge1),\quad [z_\infty,1].
\]
Because these intervals are ordered by increasing $x$, intersect only at endpoints, and $m$ is continuous at every
breakpoint (Part~1), it follows that $m$ is increasing on $[0,1]$. In particular, $m$ is continuous and
nondecreasing, so condition \textup{(PCLI2)} holds.
\end{proof}

\subsection{Proofs for Section 5.4 (Verifying \textup{(PCLI3)} and threshold-indexability)}
\label{app:pcli3}

This appendix provides the details underlying Proposition~\ref{pro:reachable-set}, which is used in
Section~\ref{s:pcli3} to verify \textup{(PCLI3)} via \citet[Proposition~6]{nmmor20}.
For convenience we restate that sufficient condition.

\begin{lemma}[\cite{nmmor20}, Proposition~6, p.~489]\label{lma:prop6}
Suppose that conditions \textup{(PCLI1)} and \textup{(PCLI2)} hold and that, moreover, for each initial state $x$ there exists
a countable set $\mathcal{D}(x)$ which is the set of endpoints of a partition of the state space (here $[0,1]$) into
left-semiclosed intervals or singletons, with the property that the state trajectory $(X_t)_{t\geqslant 0}$ starting from $x$
is contained in $\mathcal{D}(x)$ under any threshold policy. Then condition \textup{(PCLI3)} holds.
\end{lemma}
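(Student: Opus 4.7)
The plan is to recognize that this is a verbatim restatement of \cite[Proposition~6]{nmmor20}, so strictly speaking no new proof is needed and the cleanest route is simply to invoke that reference. For self-containment, however, I sketch the structural argument one would carry out from scratch.

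First, I would exploit the countable-reachability hypothesis to reduce (PCLI3) to a countable sum of per-breakpoint identities. Under the assumption, for each initial state $x$ every trajectory generated by a threshold policy lies entirely in $\mathcal{D}(x)$; consequently, both $z\mapsto F(x,z)$ and $z\mapsto G(x,z)$ are right-continuous step functions whose jumps are concentrated on the endpoints of the partition, i.e.\ on $\mathcal{D}(x)$. This allows the Lebesgue--Stieltjes integral on the right-hand side of (PCLI3) to collapse to a countable sum
\[
\sum_{z^{\star}\in\mathcal{D}(x)\cap(z_1,z_2]} m(z^{\star})\,\Delta G(x,z^{\star}),
\]
while the left-hand side telescopes into the matching sum of jumps $\Delta F(x,z^{\star})$.

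Next, I would establish the per-breakpoint identity $\Delta F(x,z^{\star}) = m(z^{\star})\,\Delta G(x,z^{\star})$. The key observation is a one-step-deviation interpretation of the staircase: as the threshold crosses a breakpoint $z^{\star}$, the only behavioral change is that, the first time the trajectory from $x$ visits belief $z^{\star}$, the action flips between passive and active, with identical continuation thereafter. The discounted reward and work increments produced by this single deviation are, by construction, $f(z^{\star},z^{\star})$ and $g(z^{\star},z^{\star})$, whose ratio is $m(z^{\star})$ by definition. Conditions (PCLI1) and (PCLI2) enter here: the former ensures $g>0$ so the ratio is well defined, and the latter gives continuity and monotonicity of $m$, which makes the integrand Borel and the sum-to-integral passage unambiguous.

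The main obstacle I anticipate is the bookkeeping that links the jumps of $F(x,\cdot)$ and $G(x,\cdot)$ to the marginal metrics $f(z^{\star},z^{\star})$ and $g(z^{\star},z^{\star})$ evaluated at belief $z^{\star}$ rather than at the starting belief $x$. A clean way to handle this is via a strong-Markov/stopping-time decomposition at the first visit to $z^{\star}$ from $x$: the pre-visit passive excursion is common to both threshold policies involved in the one-step deviation, so its discounted contribution cancels in the jump difference, leaving precisely an $(f,g)$-pair rooted at $z^{\star}$. Once this reduction is carried out, the convergence of the partial sums to the Lebesgue--Stieltjes integral follows routinely from the left-semiclosed partition structure on $\mathcal{D}(x)$ together with the continuity of $m$.
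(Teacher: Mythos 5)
Your proposal is correct: the statement is indeed a restatement of \cite[Proposition~6]{nmmor20}, and the paper itself offers no independent proof, simply citing that reference — exactly as you note. Your supplementary sketch (reduce the Lebesgue--Stieltjes integral to a countable sum over the atoms of $G(x,\cdot)$ supported on $\mathcal{D}(x)$, then verify the per-breakpoint jump identity $\Delta F(x,z^\star)=m(z^\star)\,\Delta G(x,z^\star)$ via a one-step-deviation argument, with (PCLI1) making $m$ well defined) mirrors precisely the argument the paper gives in its proof sketch of the average-criterion analogue (Lemma on (APCLI3)), so it is consistent with the intended route.
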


\paragraph{A partition induced by a countable closed set.}
We first record a general analytic lemma on partitions induced by countable closed subsets of a compact interval.

\begin{lemma}\label{lem:partition-closed-countable}
Let $K\subset[0,1]$ be a countable closed set such that $0,1\in K$.
Then there exists a partition of $[0,1]$ into pairwise disjoint sets of the form $(a,b]$ or $\{c\}$, with $a<b$ and
$a,b,c\in K$, whose union is $[0,1]$. Moreover, $K$ is precisely the set of finite endpoints of those intervals and singletons.
\end{lemma}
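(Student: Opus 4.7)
The plan is to build the partition directly from the standard decomposition of the open complement of $K$. Since $K$ is closed in $[0,1]$ with $0,1\in K$, the set $U\triangleq[0,1]\setminus K$ is open in $[0,1]$ and contained in $(0,1)$. By the classical result on open subsets of $\mathbb{R}$, $U$ decomposes as $U=\bigsqcup_{i\in I}(a_i,b_i)$, an at most countable disjoint union of maximal open intervals, and closedness of $K$ forces $a_i,b_i\in K$ for every $i$ (otherwise the component could be extended). Let $R\triangleq\{b_i:i\in I\}\subset K$ denote the set of right endpoints of these components.

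I would then take as the candidate partition
\[
\mathcal{P}\;\triangleq\;\bigl\{(a_i,b_i]:i\in I\bigr\}\;\cup\;\bigl\{\{c\}:c\in K\setminus R\bigr\},
\]
and verify three properties: pairwise disjointness, covering of $[0,1]$, and $K$ being exactly the set of finite endpoints of $\mathcal{P}$. For disjointness, the components $(a_i,b_i)$ are pairwise disjoint by construction; each $b_i\in K$ lies only in $(a_i,b_i]$, since every other component is a subset of $U$ and hence cannot contain $b_i$; and the singletons sit at points of $K\setminus R$, so they are disjoint from every $(a_i,b_i]$. For covering, each $x\in U$ lies in a unique component $(a_i,b_i)\subset(a_i,b_i]$, and each $c\in K$ either equals some $b_i$ or belongs to $K\setminus R$ and is covered by its singleton. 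For the endpoint characterization, every finite endpoint appearing in $\mathcal{P}$ is either some $a_i$, some $b_i$, or some $c\in K\setminus R$, hence in $K$; conversely each $c\in K$ appears as some $b_i$ (if $c\in R$) or as the unique endpoint of the singleton $\{c\}$ otherwise, so $c$ is always an endpoint of an element of $\mathcal{P}$.

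The only delicate bookkeeping concerns left endpoints $a_i$: since $a_i\notin(a_i,b_i]$, one must ensure $a_i$ is covered elsewhere. This is automatic because either $a_i=b_j$ for some adjacent component on its left (so $a_i\in(a_j,b_j]\in\mathcal{P}$) or $a_i\in K\setminus R$ (so $\{a_i\}\in\mathcal{P}$). The hypothesis $0,1\in K$ plays the complementary role of ensuring that the boundary points of $[0,1]$, which cannot be right endpoints of components of $U\subset(0,1)$, nevertheless lie in $K$ and are therefore covered by the construction (as singletons, or as some $a_i$). Countability of $K$ is not essential to the existence argument but is used to guarantee that the resulting partition has a countable endpoint set, as required by the application via \cite[Proposition~6]{nmmor20}.
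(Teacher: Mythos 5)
Your proof is correct and follows essentially the same route as the paper: decompose the open complement $U=[0,1]\setminus K$ into its at most countably many maximal open components $(a_i,b_i)$, observe that closedness of $K$ forces $a_i,b_i\in K$, take the half-open blocks $(a_i,b_i]$, and cover the residual points of $K$ by singletons. Worth flagging: you correctly place the singletons at $K\setminus R$, where $R$ is the set of \emph{right} endpoints $b_i$, since each $b_i$ is already absorbed into its block $(a_i,b_i]$ and the points of $K$ not covered by any block are exactly those not equal to some $b_i$. The paper's proof, as written, instead defines the singleton set by the condition $c\neq a_n$ for all $n$ (excluding \emph{left} endpoints), which is a slip given that the blocks are left-open: with $K=\{0,\tfrac12,1\}$ it would produce $(0,\tfrac12]$, $(\tfrac12,1]$ and $\{1\}$, double-covering $1$ and leaving $0$ uncovered. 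Your version is the correct one. One small fact you leave implicit but which is easy to check: distinct components of $U$ cannot share a right endpoint (two disjoint nonempty open intervals ending at the same point would have to intersect near that point), so the blocks $(a_i,b_i]$ are indeed pairwise disjoint and your endpoint characterization goes through as stated.
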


\begin{proof}
Set $U\triangleq[0,1]\setminus K$. Since $K$ is closed in $[0,1]$, $U$ is relatively open in $[0,1]$, hence of the form
$U=V\cap[0,1]$ for some open set $V\subset\mathbb{R}$. By the standard structure theorem for open sets in $\mathbb{R}$
(see, e.g., \citet[Prop.~4.8]{folland99}), we can write
\[
V=\bigcup_{n\geqslant 1} (a_n,b_n)
\]
as a countable disjoint union of open intervals $(a_n,b_n)$, and hence
\[
U=\bigcup_{n\geqslant 1} (a_n,b_n)\cap[0,1]=\bigsqcup_{n\geqslant 1} (a_n,b_n),
\]
where we have relabeled so that each $(a_n,b_n)\subset[0,1]$.

Since $K$ is closed in $[0,1]$, each $a_n$ and $b_n$ belongs to $[0,1]\setminus U = K$.
Now define
\[
\mathcal{P}
\;\triangleq\;
\{(a_n,b_n] : n\geqslant 1\}\;\cup\;\bigl\{\{c\}: c\in K \text{ and } c\neq a_n \text{ for all }n\bigr\}.
\]
The blocks in $\mathcal{P}$ are left-semiclosed, pairwise disjoint, and their union is $[0,1]$ (each $a_n$ is absorbed into
the corresponding $(a_n,b_n]$, and all remaining points of $K$ appear as singletons). By construction, the finite endpoints of
all blocks lie in $K$, and conversely every $c\in K$ is an endpoint of some block. Hence $\mathcal{P}$ is the required
partition and $K$ is precisely its endpoint set.
\end{proof}

\paragraph{Reachable sets under threshold policies.}
For $x\in[0,1]$ and a threshold $z\in[0,1]$, let $\mathcal{D}(x,z)$ denote the set of belief states visited by the trajectory
$(X_t)_{t\geqslant 0}$ starting from $X_0=x$ under the $z$-threshold policy. Define the global reachable set under all
threshold policies by
\[
\mathcal{D}_0(x)\;\triangleq\;\bigcup_{z\in[0,1]} \mathcal{D}(x,z),
\]
and set
\[
\mathcal{D}(x)\;\triangleq\;\overline{\mathcal{D}_0(x)}\cup\{0,1\}\subset[0,1],
\]
where the closure is taken in $[0,1]$.

\begin{proof}[Proof of Proposition~\ref{pro:reachable-set}]
Fix $z\in[0,1]$. Under the $z$-threshold policy, whenever $A_t=0$ (passive) we have $X_{t+1}=h(X_t)$, so along any maximal
passive spell starting from $u$ the belief evolves as $X_{s+k}=h_k(u)$, $k=0,1,\dots$; whenever $A_t=1$ (active), the belief
jumps deterministically to $p$.

Starting from $X_0=x$, any trajectory under the $z$-threshold policy therefore consists of:
\begin{itemize}[leftmargin=*]
\item an initial passive spell from $x$ (possibly of length zero if $x>z$), during which $X_t=h_t(x)$; and
\item thereafter, a sequence of cycles in which an active step takes the state to $p$, followed by a passive spell from $p$,
  during which the trajectory visits $h_t(p)=z_t$.
\end{itemize}
Hence every visited state lies in $\{h_t(x):t\geqslant 0\}\cup\{z_t:t\geqslant 0\}$, i.e.,
\begin{equation}\label{eq:Dxz-inclusion-calD}
\mathcal{D}(x,z)\;\subset\;\{h_t(x):t\geqslant 0\}\;\cup\;\{z_t:t\geqslant 0\}
\quad\text{for all }z\in[0,1].
\end{equation}
Taking the union over $z$ yields
\begin{equation}\label{eq:D0x-inclusion-calD}
\mathcal{D}_0(x)
=\bigcup_{z\in[0,1]} \mathcal{D}(x,z)
\;\subset\;\{h_t(x):t\geqslant 0\}\;\cup\;\{z_t:t\geqslant 0\}.
\end{equation}

Conversely, let $t\geqslant 0$ be arbitrary.
\begin{itemize}[leftmargin=*]
\item To realize $h_t(x)$, choose $z$ strictly larger than $\max\{h_s(x):0\leqslant s<t\}$. Then the $z$-threshold policy is
  passive for the first $t$ steps, so $X_t=h_t(x)$ and hence $h_t(x)\in \mathcal{D}(x,z)\subset \mathcal{D}_0(x)$.
\item To realize $z_t=h_t(p)$, choose any $z$ for which an intervention occurs at some finite time (e.g., any $z<p$ or any
  $p\leqslant z<z_\infty$). Then the process eventually jumps to $p$ and subsequently evolves passively from $p$, visiting
  $z_s$, $s=0,1,2,\dots$, and in particular $z_t\in \mathcal{D}(x,z)\subset \mathcal{D}_0(x)$.
\end{itemize}
Thus
\[
\{h_t(x):t\geqslant 0\}\;\cup\;\{z_t:t\geqslant 0\}\;\subset\;\mathcal{D}_0(x),
\]
and together with \eqref{eq:D0x-inclusion-calD} we obtain
\[
\mathcal{D}_0(x)=\{h_t(x):t\geqslant 0\}\;\cup\;\{z_t:t\geqslant 0\}.
\]

Since $h_t(u)-z_\infty=\rho^t(u-z_\infty)$, we have $h_t(u)\to z_\infty$ as $t\to\infty$ for $u\in\{x,p\}$, and $z_\infty$
is the unique accumulation point of $\mathcal{D}_0(x)$ in $(0,1)$. Hence
\[
\overline{\mathcal{D}_0(x)}=\mathcal{D}_0(x)\cup\{z_\infty\},
\]
and therefore
\[
\mathcal{D}(x)
=\overline{\mathcal{D}_0(x)}\cup\{0,1\}
=\{0,1\}\cup\{h_t(x):t\geqslant 0\}\cup\{z_t:t\geqslant 0\}\cup\{z_\infty\},
\]
which is the explicit expression claimed in Proposition~\ref{pro:reachable-set}.

Finally, $\mathcal{D}(x)$ is countable (a countable union plus finitely many points) and closed in $[0,1]$ by construction,
and \eqref{eq:Dxz-inclusion-calD} implies $\mathcal{D}(x,z)\subset \mathcal{D}_0(x)\subset \mathcal{D}(x)$ for every $z$.
Since $\mathcal{D}(x)\subset[0,1]$ is countable, closed, and contains $0,1$, Lemma~\ref{lem:partition-closed-countable}
applied with $K=\mathcal{D}(x)$ yields a partition of $[0,1]$ into left-semiclosed intervals or singletons having
$\mathcal{D}(x)$ as its endpoint set. This verifies the structural requirement in Lemma~\ref{lma:prop6}.
\end{proof}

\section{Proofs for Section 6 (Analytic Lagrangian relaxation and dual bound computation)}
\label{app:B}
Note that, in part (b), $F(p,z)$,  $G(p,z)$, $K_F(z)$ and $K_G(z)$ are as in Proposition~\textup{\ref{pro:edrwm}(b)}.

\begin{corollary}[Reward and work under Uniform initial belief]
\label{cor:edrwm-unif}
$F_{\nu_0}(z)$ and $G_{\nu_0}(z)$ admit the following closed-form
expressions.

\begin{enumerate}[label=(\alph*)]
\item If $0\leqslant z < p$, then
\[
F_{\nu_0}(z)
=
\frac{r}{1-\beta} - \frac{r}{2}\,z^2,
\quad
G_{\nu_0}(z)
=
\frac{1}{1-\beta} - z.
\]

\item If $p \leqslant z < z_\infty$, 
let $s=\tau(p, z)$, so
$
z_{s-1}\leqslant z<z_s,
$
and $t=\tau(0,z)$.  Define
\[
x_k(z)
\;\triangleq\;
h_k^{-1}(z)
\;=\;
z_\infty + \frac{z-z_\infty}{\rho^k},
\quad k=0,1,2,\ldots,
\]
\[
L_k(z)
\;\triangleq\;
\begin{cases}
x_{k-1}(z)-x_k(z), & k=1,\ldots,t-1,\\[2pt]
x_{t-1}(z),        & k=t,
\end{cases}
\]
and, for each $k\in\{1,\ldots,t\}$,
\[
B_k
=
\frac{1-(\beta\rho)^k}{1-\beta\rho},
\quad
A_k
=
\frac{1-\beta^k}{1-\beta}\,(1-z_\infty)
+
\frac{1-(\beta\rho)^k}{1-\beta\rho}\,z_\infty,
\]
so that $\Phi_k(x)=A_k-B_kx$.
Then,
\[
G_{\nu_0}(z)
=
K_G(z)\,S(z),
\quad
F_{\nu_0}(z)
=
r\,J(z)
\;+\;
K_F(z)\,S(z),
\]
where
\[
S(z)
\triangleq
\int_0^1 \beta^{\tau(x,z)}\,\mathrm{d}x
=
(1-z)
+
\sum_{k=1}^t \beta^k L_k(z),
\]
and hence
\[
S(z)=
\begin{cases}
1-z\;+\;\beta^t z_\infty
\;+\;\beta\,(z_\infty-z)\,
\dfrac{\,1-\rho-(1-\beta)\bigl(\tfrac{\beta}{\rho}\bigr)^{t-1}\,}{\rho-\beta},
& \beta\neq\rho,\\[10pt]
1-z\;+\;\rho^t z_\infty
\;+\;(z_\infty-z)\bigl[(1-\rho)(t-1)-\rho\bigr],
& \beta=\rho,
\end{cases}
\]
while
\[
\begin{aligned}
J(z)
&=\int_0^z \Phi_{\tau(x,z)}(x)\,\mathrm{d}x \\[2pt]
&=\sum_{k=1}^{t}
\left[
A_k\,L_k(z)
-\frac{B_k}{2}\bigl(x_{k-1}(z)^2-x_k(z)^2\bigr)
\right],
\end{aligned}
\]
where, for notational convenience, we set
$
x_t(z)\equiv 0,
L_t(z)\equiv x_{t-1}(z).
$

\item If $z_\infty\leqslant z < 1$, then
\[
G_{\nu_0}(z) = 1-z,
\]
and, with $\Phi_\infty(x)$ as in Section~\textup{\ref{s:ewrm}},
\[
F_{\nu_0}(z)
=
r\bigg[
\int_0^z \Phi_\infty(x)\,\mathrm{d}x
+ (1-z)\bigl(1+\beta\Phi_\infty(p)\bigr)
\bigg],
\]
which simplifies to the explicit affine--quadratic expression in $z$
obtained by direct integration.

\item If $z\geqslant 1$, then the policy is always passive and, independently of $z$,
\[
F_{\nu_0}(z)
=
r\bigg[
\frac{1-z_\infty}{1-\beta}
+
\frac{z_\infty-\tfrac12}{1-\beta\rho}
\bigg],
\quad
G_{\nu_0}(z)=0,
\]
\end{enumerate}
\end{corollary}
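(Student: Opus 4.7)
The plan is to integrate the piecewise closed-form expressions in Proposition~\ref{pro:edrwm} against Lebesgue measure on $[0,1]$, treating each of the four $z$-regimes separately.

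In cases (a), (c), and (d) the integration is elementary. For $z<p$, both $F(x,z)$ and $G(x,z)$ are piecewise affine in $x$ with a single breakpoint at $x=z$, so splitting $\int_0^1=\int_0^z+\int_z^1$ yields the stated formulas directly. For $z_\infty\leqslant z<1$, the same two-piece split applies, with $\int_0^z\Phi_\infty(x)\,dx$ computed from the affine closed form of $\Phi_\infty$ in Section~\ref{s:ewrm}. For $z\geqslant 1$ the $z$-threshold policy is always passive, so $G_{\nu_0}(z)=0$ and $F_{\nu_0}(z)=r\int_0^1\Phi_\infty(x)\,dx$ follows immediately.

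The substantive work is in case (b), $p\leqslant z<z_\infty$, where the integrands depend on the piecewise-constant function $\tau(\cdot,z)$. The first step is to identify the partition of $[0,1]$ induced by $\tau(\cdot,z)$. Using characterization~\eqref{eq:tau-characterization} and monotonicity of $h$, the region $\{\tau(x,z)=k\}\cap[0,z]$ equals $(x_k(z),x_{k-1}(z)]$ with $x_k(z)=h_k^{-1}(z)=z_\infty+(z-z_\infty)\rho^{-k}$; on $x>z$, $\tau=0$. Given $z_{s-1}\leqslant z<z_s$, one checks that $t=\tau(0,z)=s+1$ and that $x_{t-1}(z)\geqslant 0>x_t(z)$, so the leftmost cell degenerates to $[0,x_{t-1}(z)]$ with $\tau\equiv t$; this justifies the convention $x_t(z)\equiv 0$, $L_t(z)\equiv x_{t-1}(z)$. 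Once the partition is in place, Proposition~\ref{pro:edrwm}(b) and linearity give
\[
G_{\nu_0}(z)=K_G(z)\int_0^1\beta^{\tau(x,z)}\,dx=K_G(z)\,S(z),
\]
\[
F_{\nu_0}(z)=r\int_0^z\Phi_{\tau(x,z)}(x)\,dx+K_F(z)\,S(z)=r\,J(z)+K_F(z)\,S(z),
\]
and writing $\Phi_k(x)=A_k-B_kx$ (a direct rearrangement of the closed form in Section~\ref{s:ewrm}), the integral of $\Phi_k$ over $(x_k(z),x_{k-1}(z)]$ equals $A_kL_k(z)-(B_k/2)(x_{k-1}(z)^2-x_k(z)^2)$, yielding the stated expression for $J(z)$.

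The final step is to evaluate $S(z)$ in closed form. From $L_k(z)=(z_\infty-z)(1-\rho)\rho^{-k}$ for $k=1,\dots,t-1$ one obtains
\[
\sum_{k=1}^{t-1}\beta^k L_k(z)=(z_\infty-z)(1-\rho)\sum_{k=1}^{t-1}(\beta/\rho)^k,
\]
a geometric sum whose closed form splits according to whether $\beta\neq\rho$ or $\beta=\rho$. Adding the boundary contribution $\beta^t L_t(z)=\beta^t z_\infty-\beta(\beta/\rho)^{t-1}(z_\infty-z)$ and the $x>z$ term $(1-z)$ gives, after algebraic simplification, the two-case formula for $S(z)$ in the statement. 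The main obstacle I anticipate is bookkeeping: indexing the partition cells correctly, handling the irregular leftmost cell at $k=t$ where the naive $x_t(z)$ would be negative, and collecting the geometric sum into the compact expression involving $(\beta/\rho)^{t-1}$; the degenerate ratio $\beta=\rho$ must be treated as a separate limit to avoid the $0/0$ in the generic geometric formula.
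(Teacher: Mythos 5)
Your proposal is correct and follows essentially the same route as the paper's proof: in cases (a), (c), (d) it integrates the piecewise-affine expressions of Proposition~\ref{pro:edrwm} directly, and in case (b) it partitions $[0,z]$ by level sets of $\tau(\cdot,z)$ into cells $(x_k(z),x_{k-1}(z)]$ (with the truncated leftmost cell $[0,x_{t-1}(z)]$), then evaluates $S(z)$ as a geometric sum split on $\beta=\rho$ versus $\beta\neq\rho$. The only cosmetic difference is that you make the identity $t=\tau(0,z)=s+1$ explicit, whereas the paper keeps $s$ and $t$ as separate symbols and derives the equivalent characterization $z_{t-2}\leqslant z<z_{t-1}$.
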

\begin{proof}
For each $z$ we set
$F_{\nu_0}(z)=\int_0^1 F(x,z)\,\mathrm{d}x$ and
$G_{\nu_0}(z)=\int_0^1 G(x,z)\,\mathrm{d}x$, and use the explicit
piecewise formulas of Proposition~\ref{pro:edrwm}.

\smallskip\noindent
\emph{(a), (c), (d):} In the regimes $0\leqslant z<p$, $z_\infty\leqslant z<1$,
and $z\geqslant 1$, the expressions for $F(x,z)$ and $G(x,z)$ are simple
piecewise affine (or constant) functions of $x$ on at most two
intervals ($x\leqslant z$ and $x>z$). Direct integration over $[0,1]$
produces the stated formulas after straightforward algebra.

\smallskip\noindent
\emph{(b) Case $p\leqslant z<z_\infty$.}
Fix $z$ with $p\leqslant z<z_\infty$. As in Proposition~\ref{pro:edrwm}(b), let
$s=s(z)\in\{1,2,\ldots\}$ be the unique integer such that
\[
z_{s-1}\leqslant z<z_s,
\]
so that
\[
F(p,z)
=
r\,\frac{\Phi_{s}(p)+\beta^{s}}{1-\beta^{s+1}},
\quad
G(p,z)
=
\frac{\beta^{s}}{1-\beta^{s+1}}.
\]
Define
\[
K_F(z)\;\triangleq\;r+\beta F(p,z),
\quad
K_G(z)\;\triangleq\;1+\beta G(p,z).
\]

For $x>z$, the $z$-threshold policy intervenes immediately at time $0$, so by
Proposition~\ref{pro:edrwm}(b),
\[
F(x,z)=K_F(z),
\quad
G(x,z)=K_G(z).
\]
For $x\leqslant z$, the belief follows the passive trajectory $h_t(x)$ until it
first exceeds $z$ at the finite time
\[
\tau(x,z)
\;\triangleq\;
\inf\{t\geqslant 1 : h_t(x)>z\},
\]
after which the policy intervenes and the belief jumps to $p$. Iterating the
dynamic programming equations along that passive spell yields (cf.\
Section~\ref{s:ewrm})
\[
G(x,z)
=
\beta^{\tau(x,z)}K_G(z),
\quad
F(x,z)
=
r\,\Phi_{\tau(x,z)}(x)
+\beta^{\tau(x,z)}K_F(z),
\]
where $\Phi_t(x)=\sum_{s=0}^{t-1}(1-h_s(x))\beta^s$.

Integrating over the uniform initial distribution $\nu_0$ on $[0,1]$ gives
\[
\begin{aligned}
G_{\nu_0}(z)
&=
\int_0^1 G(x,z)\,\mathrm{d}x\\
&=
\int_0^z \beta^{\tau(x,z)}K_G(z)\,\mathrm{d}x
\;+\;
\int_z^1 K_G(z)\,\mathrm{d}x\\
&=
K_G(z)\left[\int_0^z \beta^{\tau(x,z)}\,\mathrm{d}x + \int_z^1 1\,\mathrm{d}x\right]
=
K_G(z)\int_0^1 \beta^{\tau(x,z)}\,\mathrm{d}x,
\end{aligned}
\]
and
\[
\begin{aligned}
F_{\nu_0}(z)
&=
\int_0^1 F(x,z)\,\mathrm{d}x\\
&=
\int_0^z \Bigl[r\,\Phi_{\tau(x,z)}(x)
              +\beta^{\tau(x,z)}K_F(z)\Bigr]\,\mathrm{d}x
\;+\;
\int_z^1 K_F(z)\,\mathrm{d}x\\[3pt]
&=
r\int_0^z \Phi_{\tau(x,z)}(x)\,\mathrm{d}x
\;+\;
K_F(z)\Bigl[\int_0^z \beta^{\tau(x,z)}\,\mathrm{d}x + \int_z^1 1\,\mathrm{d}x\Bigr]\\[3pt]
&=
r\int_0^z \Phi_{\tau(x,z)}(x)\,\mathrm{d}x
\;+\;
K_F(z)\int_0^1 \beta^{\tau(x,z)}\,\mathrm{d}x.
\end{aligned}
\]

We now compute these integrals in closed form. Let
$t\triangleq\tau(0,z)\in\{2,3,\ldots\}$ denote the first passive
threshold--crossing time starting from $x=0$. Since $p\leqslant z<z_\infty$, one has
$t\geqslant 2$ and
\[
\tau(0,z)=t
\;\Longleftrightarrow\;
z_{t-2}\leqslant z<z_{t-1}.
\]
Define the inverse passive trajectory
\[
x_k(z)
\;\triangleq\;
h_k^{-1}(z)
\;=\;
z_\infty + \frac{z-z_\infty}{\rho^k},
\quad k=0,1,2,\ldots,
\]
so that \(x_0(z)=z\) and \(x_k(z)\) decreases with \(k\); for the finite \(t=\tau(0,z)\) used below, \(x_{t-1}(z)\ge0\) and \(x_t(z)<0\).
Then the set $[0,z]$ is partitioned into the disjoint intervals
\[
I_k(z)
=
\begin{cases}
\bigl(x_k(z),\,x_{k-1}(z)\bigr], & k=1,\ldots,t-1,\\[2pt]
\bigl[0,\,x_{t-1}(z)\bigr],      & k=t.
\end{cases}
\]
and on $I_k(z)$ we have $\tau(x,z)=k$. Writing
\[
L_k(z)
\;\triangleq\;
|I_k(z)|
=
\begin{cases}
x_{k-1}(z)-x_k(z), & k=1,\ldots,t-1,\\[2pt]
x_{t-1}(z),        & k=t,
\end{cases}
\]
a direct computation using the affine form of $x_k(z)$ shows that
\[
L_k(z)
=
\frac{(1-\rho)\bigl(z_\infty-z\bigr)}{\rho^k},
\quad k=1,\ldots,t-1,
\quad
L_t(z)=x_{t-1}(z)
=
z_\infty + \frac{z-z_\infty}{\rho^{t-1}}.
\]

Hence
\[
\int_0^1 \beta^{\tau(x,z)}\,\mathrm{d}x
=
\int_z^1 1\,\mathrm{d}x
+\sum_{k=1}^t \beta^k |I_k(z)|
=
(1-z)
+
\sum_{k=1}^t \beta^k L_k(z).
\]
For $\beta\neq\rho$ this sum can be written as
\[
(1-z)
+
(1-\rho)\bigl(z_\infty-z\bigr)\,
\frac{\tfrac{\beta}{\rho}\,\bigl(1-(\tfrac{\beta}{\rho})^{\,t-1}\bigr)}
     {1-\tfrac{\beta}{\rho}}
+
\beta^t\Bigl(
z_\infty + \frac{z-z_\infty}{\rho^{t-1}}
\Bigr),
\]
while in the critical case $\beta=\rho$ we use
\[
\sum_{k=1}^{t-1}\left(\frac{\beta}{\rho}\right)^k
=
\sum_{k=1}^{t-1}1
=t-1,
\]
which yields
\[
\int_0^1 \beta^{\tau(x,z)}\,\mathrm{d}x
=
(1-z)
+(1-\rho)\bigl(z_\infty-z\bigr)(t-1)
+\rho^t z_\infty + \rho\,(z-z_\infty).
\]

Similarly,
\[
\int_0^z \Phi_{\tau(x,z)}(x)\,\mathrm{d}x
=
\sum_{k=1}^t \int_{I_k(z)} \Phi_k(x)\,\mathrm{d}x,
\]
where, for each $k$,
\[
\Phi_k(x)
=
\sum_{s=0}^{k-1}\bigl(1-h_s(x)\bigr)\beta^s
=
A_k - B_k x
\]
with
\[
B_k
=
\frac{1-(\beta\rho)^k}{1-\beta\rho},
\quad
A_k
=
\frac{1-\beta^k}{1-\beta}\,(1-z_\infty)
+
\frac{1-(\beta\rho)^k}{1-\beta\rho}\,z_\infty.
\]
Note that $1-\beta\neq0$ and $1-\beta\rho\neq0$ for all $\beta\in(0,1)$
and $\rho\in(0,1)$, so these expressions remain valid also when
$\beta=\rho$. Hence, for $k=1,\ldots,t-1$,
\[
\int_{I_k(z)} \Phi_k(x)\,\mathrm{d}x
=
\int_{x_k(z)}^{x_{k-1}(z)} (A_k-B_kx)\,\mathrm{d}x
=
A_k L_k(z)
-\frac{B_k}{2}\bigl(x_{k-1}(z)^2 - x_k(z)^2\bigr),
\]
and, for $k=t$,
\[
\int_{I_t(z)} \Phi_t(x)\,\mathrm{d}x
=
\int_0^{x_{t-1}(z)} (A_t-B_t x)\,\mathrm{d}x
=
A_t x_{t-1}(z) - \frac{B_t}{2}x_{t-1}(z)^2.
\]
Therefore
\[
\int_0^z \Phi_{\tau(x,z)}(x)\,\mathrm{d}x
=
\sum_{k=1}^{t-1}
\left[
A_k L_k(z)
-\frac{B_k}{2}\bigl(x_{k-1}(z)^2 - x_k(z)^2\bigr)
\right]
+
A_t x_{t-1}(z) - \frac{B_t}{2}x_{t-1}(z)^2,
\]
which, together with the above expression for
$\int_0^1 \beta^{\tau(x,z)}\,\mathrm{d}x$, yields fully explicit formulas for
$F_{\nu_0}(z)$ and $G_{\nu_0}(z)$ in terms of the model parameters,
valid for all $\beta\in(0,1)$, including the critical case $\beta=\rho$.

\end{proof}

\section{Proofs for Section 7 (Dependence of the Whittle index on model parameters)}
\label{app:sec7}

\subsection{Proofs for Section 7.1 (Dependence on the lapse probability from adherence $p$)}
\label{app:sec71}

\begin{proof}[Proof of Proposition~\ref{pro:mp-monotone-p}]
Fix $x\in(0,1)$, $q\in(0,1)$, $\beta\in(0,1)$ and $r>0$, and view $m(x;p,q)$ as a function of $p\in(0,1-q)$.

\smallskip\noindent
\emph{Step 1: Constancy for $p\geqslant x$.}
If $p>x$, then $x<p$ and the first branch in Proposition~\ref{pro:dmpi-explicit} applies, giving
$m(x;p,q)=r x$, independent of $p$ on $(x,1-q)$. Also $m(x;x,q)=r x$ by continuity of the MP index in $x$
(Lemma~\ref{lma:pcli2}). Hence $p\mapsto m(x;p,q)$ is constant on $[x,1-q)$.

\smallskip\noindent
\emph{Step 2: Strict decrease for $0<p<x$.}
Assume $0<p<x$. There are two possibilities depending on whether $x$ lies on the last branch or on a middle branch.

\smallskip\noindent
\emph{(a) Last branch.}
The last branch applies when $z_\infty(p,q)\leqslant x$, i.e.,
\[
\frac{p}{p+q}\leqslant x
\quad\Longleftrightarrow\quad
p\leqslant p_\infty(x)\triangleq \frac{q\,x}{1-x}.
\]
(On $(0,1-q)$ this condition is vacuous when $x\geqslant 1-q$.) On this region,
\[
m(x;p,q)=\frac{r x}{1-\beta\rho}=\frac{r x}{1-\beta+\beta(p+q)},
\]
so $m$ is differentiable in $p$ and
\[
\frac{\partial}{\partial p}m(x;p,q)
=
-\,\frac{r x\,\beta}{\bigl(1-\beta+\beta(p+q)\bigr)^2}
<0.
\]
Hence $m(x;p,q)$ is decreasing in $p$ throughout the last-branch region.

\smallskip\noindent
\emph{(b) Middle branches.}
Now suppose $p_\infty(x)<p<x$, so $x<z_\infty(p,q)$ and therefore $x$ lies on some middle branch.
For each $t\ge1$ define the (possibly empty) branch set
\[
I_t(x)\triangleq \bigl\{\,p\in(0,1-q):\ z_{t-1}(p)\leqslant x<z_t(p)\,\bigr\},
\quad z_t(p)=h_t(p).
\]
On the interior $\mathrm{int}\,I_t(x)$ (where $t$ is fixed), Proposition~\ref{pro:dmpi-explicit} gives
\[
m(x;p,q)
=
\frac{r}{1-\beta}
\Bigl[(1-\beta^{t+1})\,x
      + \beta^{t+1}-\beta + \beta(1-\beta)\,\Phi_t(p)\Bigr],
\quad p\in \mathrm{int}\,I_t(x),
\]
so $m$ is differentiable in $p$ on $\mathrm{int}\,I_t(x)$ and
\[
\frac{\partial}{\partial p}m(x;p,q)
=
r\beta\,\Phi_t'(p).
\]
It remains to show $\Phi_t'(p)<0$. By \eqref{eq:Phi-t-p-sum},
\[
\Phi_t(p)=\sum_{s=0}^{t-1}(1-z_s)\beta^s,
\quad z_s=h_s(p),\quad h_{s+1}(p)=p+\rho(p)h_s(p),\ \rho(p)=1-p-q.
\]
Differentiating term-by-term yields
\[
\Phi_t'(p)=\sum_{s=0}^{t-1}-\,h_s'(p)\,\beta^s.
\]
Let $u_s(p)\triangleq h_s'(p)$. Differentiating the recursion and using $\rho'(p)=-1$ gives
\[
u_{s+1}(p)=1-h_s(p)+\rho(p)\,u_s(p),
\quad u_0(p)=1.
\]
For $p\in(0,1-q)$ we have $0<\rho(p)<1$, and $h_s(p)\nearrow z_\infty(p,q)=\frac{p}{p+q}<1$, hence $h_s(p)<1$ for all
finite $s$ and so $1-h_s(p)>0$. An induction on $s$ shows $u_s(p)>0$ for all $s\ge0$. Therefore each term
$-u_s(p)\beta^s$ is strictly negative, so $\Phi_t'(p)<0$ and thus
\[
\frac{\partial}{\partial p}m(x;p,q)<0
\quad\text{for all }p\in \mathrm{int}\,I_t(x).
\]
Hence $m(x;p,q)$ is decreasing on each nonempty interval $I_t(x)$.

\smallskip\noindent
\emph{Step 3: No upward jumps at middle-branch breakpoints.}
A switch between consecutive middle branches occurs at a breakpoint $\bar p$ satisfying $x=z_t(\bar p)$ for some $t\ge1$.
At such a breakpoint, the $t$- and $(t+1)$-branch formulas coincide. Indeed, write
\[
m_t(x;p,q)=\frac{r}{1-\beta}\Bigl[(1-\beta^{t+1})x+\beta^{t+1}-\beta+\beta(1-\beta)\Phi_t(p)\Bigr],
\]
and use the recursion $\Phi_{t+1}(p)=\Phi_t(p)+(1-z_t(p))\beta^t$. Substituting $x=z_t(p)$ yields
$m_t(z_t(p);p,q)=m_{t+1}(z_t(p);p,q)$. Thus $m(x;p,q)$ is continuous at every finite middle-branch breakpoint in $p$, and
since it is decreasing on each adjacent branch interval it cannot increase across the breakpoint.

\smallskip\noindent
\emph{Step 4: No upward jump at the middle/last boundary.}
If $x<1-q$, the boundary between the last branch and the middle branches is $p=p_\infty(x)=q x/(1-x)$, where
$z_\infty(p_\infty(x),q)=x$. On the last branch,
\[
m\bigl(x;p_\infty(x),q\bigr)=\frac{r x}{1-\beta+\beta\bigl(p_\infty(x)+q\bigr)}.
\]
Let $p_n\downarrow p_\infty(x)$ with $p_n>p_\infty(x)$. Then $x<z_\infty(p_n,q)$ so $x$ lies on a middle branch with
$t_n=\tau(p_n,x)\to\infty$. Using the middle-branch expression in Proposition~\ref{pro:dmpi-explicit} and the estimate
\[
\bigl|\Phi_{t_n}(p_n)-\Phi_\infty\bigl(p_\infty(x)\bigr)\bigr|
\le
\bigl|\Phi_{t_n}(p_n)-\Phi_\infty(p_n)\bigr|
+
\bigl|\Phi_\infty(p_n)-\Phi_\infty\bigl(p_\infty(x)\bigr)\bigr|,
\]
we have $\Phi_{t_n}(p_n)\to \Phi_\infty\bigl(p_\infty(x)\bigr)$ because
$|\Phi_{t}(p)-\Phi_\infty(p)|\leqslant \sum_{s=t}^\infty \beta^s=\beta^t/(1-\beta)$ and $\beta^{t_n}\to0$, while
$\Phi_\infty(\cdot)$ is continuous in $p$. Passing to the limit in the middle-branch formula yields
\[
\lim_{n\to\infty} m(x;p_n,q)=m\bigl(x;p_\infty(x),q\bigr),
\]
so $m(x;p,q)$ is continuous at $p=p_\infty(x)$ and cannot jump upward there.

\smallskip\noindent
\emph{Step 5: Conclusion.}
Steps 2--4 show that $m(x;p,q)$ is decreasing in $p$ on $(0,\min\{x,1-q\})$: it is decreasing on each open
branch interval and continuous at the (finite and limiting) breakpoints, so there are no upward jumps. Together with Step~1,
this implies that $p\mapsto m(x;p,q)$ is nonincreasing on $(0,1-q)$, decreasing on $(0,\min\{x,1-q\})$, and constant
on $[x,1-q)$.
\end{proof}

\subsection{Proofs for Section 7.2 (Dependence on the spontaneous recovery probability $q$)}
\label{app:sec72}

\begin{proof}[Proof of Proposition~\ref{pro:mp-monotone-q}]
Fix $p\in(0,1)$, $\beta\in(0,1)$, $r>0$ and $x\in(0,1)$, and view $m(x;p,q)$ as a function of $q\in(0,1-p)$.
Let $\rho=1-p-q$. Recall the passive belief update
\[
h(u)=p+\rho u,\quad \rho=1-p-q,
\]
and define the iterates
\[
h_0(p,q)=p,\quad h_{s+1}(p,q)=p+\rho\,h_s(p,q),\quad s\ge0,
\]
together with the limiting point
\[
z_\infty(p,q)=\frac{p}{1-\rho}=\frac{p}{p+q}.
\]
From Proposition~\ref{pro:dmpi-explicit} we have the piecewise representation
\[
m(x;p,q) =
\begin{cases}
r x, & 0\leqslant x < p,\\[4pt]
\dfrac{r}{1-\beta}
\begin{aligned}[t]
\Bigl[&(1-\beta^{t+1})\,x
      + \beta^{t+1}-\beta \\
      &\qquad + \beta(1-\beta)\,\Phi_t(p,q)\Bigr],
\end{aligned}
& h_{t-1}(p,q)\leqslant x < h_t(p,q)\ \text{for some }t\geqslant 1,\\[8pt]
\dfrac{r\,x}{1-\beta\rho},
        & z_\infty(p,q)\leqslant x \leqslant 1,
\end{cases}
\]
where
\[
\Phi_t(p,q)
=
\sum_{s=0}^{t-1} \bigl(1-h_s(p,q)\bigr)\,\beta^{s}.
\]

\smallskip\noindent
\emph{(i) First branch $0\leqslant x<p$.}
If $x<p$, the first line gives $m(x;p,q)=r x$, independent of $q$. This proves (i).

\smallskip\noindent
Assume henceforth that $x\geqslant p$.

\smallskip\noindent
\emph{(ii.a) First middle branch ($t=1$).}
The first middle branch applies whenever $p\leqslant x<h_1(p,q)$.
On this branch,
\[
m(x;p,q)
=
\frac{r}{1-\beta}
\Bigl[(1-\beta^{2})\,x
      + \beta^{2}-\beta + \beta(1-\beta)\,\Phi_1(p,q)\Bigr],
\]
and
\[
\Phi_1(p,q)=\sum_{s=0}^{0} (1-h_s(p,q))\beta^s = 1-h_0(p,q)=1-p,
\]
which does not depend on $q$. Hence $m(x;p,q)$ is independent of $q$ on this branch, proving (ii)(a).

\smallskip\noindent
\emph{(ii.b) Higher middle branches ($t\geqslant 2$).}
Fix $t\geqslant 2$ and consider a point $(p,q)$ such that $h_{t-1}(p,q)\leqslant x<h_t(p,q)$.
On this branch,
\[
m(x;p,q)
=
\frac{r}{1-\beta}
\Bigl[(1-\beta^{t+1})\,x
      + \beta^{t+1}-\beta + \beta(1-\beta)\,\Phi_t(p,q)\Bigr].
\]
Thus, on the interior of such a branch region, differentiating with respect to $q$ yields
\[
\frac{\partial}{\partial q} m(x;p,q)
=
r\beta\,\frac{\partial}{\partial q}\Phi_t(p,q).
\]
It therefore suffices to show $\partial \Phi_t(p,q)/\partial q>0$ for $t\geqslant 2$.

Write $u_s(q)\triangleq \frac{\partial}{\partial q}h_s(p,q)$ for fixed $p$.
Differentiating the recursion $h_{s+1}(p,q)=p+\rho h_s(p,q)$ with respect to $q$ and using $\rho'(q)=-1$ gives
\[
u_{s+1}(q)
=
-\,h_s(p,q)+\rho\,u_s(q),
\quad u_0(q)=\frac{\partial}{\partial q}h_0(p,q)=0.
\]
Since $0<\rho<1$ for $q\in(0,1-p)$ and $0<h_s(p,q)<1$ for all $s$ (because $h_s(p,q)\nearrow z_\infty(p,q)<1$),
we have
\[
u_1(q)=-h_0(p,q)=-p<0,
\]
and if $u_s(q)<0$ then $u_{s+1}(q)=-h_s(p,q)+\rho u_s(q)<0$. Hence
\[
u_0(q)=0,\quad u_s(q)<0\ \ \text{for all }s\ge1.
\]
Now
\[
\frac{\partial}{\partial q}\Phi_t(p,q)
=
\sum_{s=0}^{t-1} -\,\frac{\partial h_s(p,q)}{\partial q}\,\beta^s
=
-\sum_{s=0}^{t-1} u_s(q)\,\beta^s
=
-\sum_{s=1}^{t-1} u_s(q)\,\beta^s,
\]
which is positive for $t\geqslant 2$ because the sum is nonempty and each term satisfies $-u_s(q)\beta^s>0$.
Therefore $\frac{\partial}{\partial q}\Phi_t(p,q)>0$ and hence
\[
\frac{\partial}{\partial q}m(x;p,q)=r\beta\,\frac{\partial}{\partial q}\Phi_t(p,q)>0
\]
on every higher middle branch ($t\geqslant 2$), proving (ii)(b).

\smallskip\noindent
\emph{(iii) Last branch.}
The last branch applies when $z_\infty(p,q)\leqslant x$, i.e.,
\[
\frac{p}{p+q}\leqslant x.
\]
For fixed $p$, the map $q\mapsto z_\infty(p,q)=p/(p+q)$ is decreasing on $(0,1-p)$.
For each $x\in(p,1)$ there is a unique $q_\infty(x)\in(0,1-p)$ such that $z_\infty(p,q_\infty(x))=x$, obtained by solving
$x=p/(p+q)$ for $q$, which gives
\[
q_\infty(x)=\frac{p(1-x)}{x}.
\]
Thus $z_\infty(p,q)\leqslant x$ is equivalent to $q\geqslant q_\infty(x)$, so the last branch applies exactly on
$J_\infty(x)=[q_\infty(x),\,1-p)$ (for $x>p$; it is empty for $x\leqslant p$).

On the last branch,
\[
m(x;p,q)=\frac{r x}{1-\beta\rho}=\frac{r x}{1-\beta+\beta(p+q)}.
\]
Let $D(q)=1-\beta+\beta(p+q)$, so $D'(q)=\beta>0$. Then
\[
\frac{\partial}{\partial q}m(x;p,q)
=
r x\,\frac{\partial}{\partial q}\left(\frac{1}{D(q)}\right)
=
-\,r x\,\frac{D'(q)}{D(q)^2}
=
-\,\frac{r x\beta}{D(q)^2}<0.
\]
Hence $m(x;p,q)$ is decreasing in $q$ on the last branch, proving (iii).

\smallskip
Combining (i), (ii) and (iii) yields the stated behavior and completes the proof.
\end{proof}

\rev{
\section{Proofs for Section~\ref{s:iatac} (Long-run average criterion)}
\label{app:average-metrics}

This appendix checks the model-specific ingredients used in Section~\ref{s:iatac} to apply the long-run average PCL framework of \citet[Secs.~3--4]{nmwp0726}. We do not repeat the criterion-agnostic PCL verification theorem, the generalized-inverse threshold argument, or the marginal sign lemma; those are \citet[Theorem~3.6, Lemmas~3.3--3.4, and Theorem~4.5]{nmwp0726}.

\begin{proof}[Proof of Proposition~\ref{pro:avg-acoe}]
On the invariant core \(\mathsf X^{\mathrm{inv}}=[0,z_\infty]\), the discounted \(\lambda\)-price Bellman operator is
\[
(T_{\beta,\lambda}V)(x)
=
\max\{r-\lambda+\beta V(p),\; r(1-x)+\beta V(h(x))\},
\qquad x\in\mathsf X^{\mathrm{inv}}.
\]
Let \(L=r/(1-\rho)\). If \(V\) is \(L\)-Lipschitz on \(\mathsf X^{\mathrm{inv}}\), the active branch is constant and the passive branch has Lipschitz modulus at most
\[
r+\beta\rho L\le r+\rho L=L.
\]
Hence \(T_{\beta,\lambda}\) leaves the same Lipschitz class invariant for all \(0<\beta<1\). The one-period net reward is bounded on the compact core, and the transition kernel is weakly continuous because the next state is either \(p\) or \(h(x)=p+\rho x\). Thus the compact-state relative-compactness condition in \citet[Lemma~4.2]{nmwp0726} holds. The standard vanishing-discount average-optimality argument summarized in \citet[Sec.~4.1]{nmwp0726} gives a state-independent gain and a continuous bias satisfying \eqref{eq:avg-acoe-adherence}, together with the optimal-action characterization in \citet[Assumption~4.1]{nmwp0726}.
\end{proof}

\begin{proof}[Proof of Proposition~\ref{pro:avg-rwm}]
The formulas are the Abelian limits of Proposition~\ref{pro:edrwm}; equivalently, they follow directly from the regenerative structure of threshold trajectories on \(\mathsf X^{\mathrm{inv}}\).

If \(z<p\), then after at most one transient initial passive step the threshold policy intervenes in every period. A finite transient does not affect long-run averages, so \(F^{\mathrm{avg}}(z)=r\) and \(G^{\mathrm{avg}}(z)=1\).

If \(p\le z<z_\infty\), let \(s=s(z)\ge1\) satisfy \(z_{s-1}\le z<z_s\). Starting from \(p\), the threshold trajectory has the deterministic cycle
\[
p=z_0,z_1,\ldots,z_{s-1},\quad \text{activation},\quad p,
\]
of length \(s+1\). The cycle contains one activation and total reward \(r\{1+\bar\Phi_s(p)\}\), where the term \(1\) is the intervention-period adherence reward. Hence
\[
G^{\mathrm{avg}}(z)=\frac1{s+1},
\qquad
F^{\mathrm{avg}}(z)=r\,\frac{1+\bar\Phi_s(p)}{s+1}.
\]

At \(z=z_\infty\), passivity never crosses the threshold on the core. Thus the policy is all-passive on \(\mathsf X^{\mathrm{inv}}\), \(G^{\mathrm{avg}}(z_\infty)=0\), and the passive belief converges to \(z_\infty\), giving \(F^{\mathrm{avg}}(z_\infty)=r(1-z_\infty)\).
\end{proof}

\begin{proof}[Proof of Proposition~\ref{pro:avg-mrwm}]
We take Abelian bias limits of the discounted marginal formulas in Proposition~\ref{pro:dmrwm}, restricted to \(x\in\mathsf X^{\mathrm{inv}}\) and \(z\in\mathsf Z^{\mathrm{inv}}\). This is the model-specific version of the bias-Q marginal passage described in \citet[Sec.~4.2]{nmwp0726}.

If \(z<p\), Proposition~\ref{pro:dmrwm}\textup{(a)} gives \(f_\beta(x,z)=rx\) and \(g_\beta(x,z)=1\) for every \(\beta\), proving part \textup{(a)}.

Let \(p\le z<z_\infty\), and let \(s=s(z)=\tau(p,z)\), so \(z_{s-1}\le z<z_s\). Since
\[
G_\beta(p,z)=\frac{\beta^s}{1-\beta^{s+1}},
\qquad
K_G^\beta(z)\triangleq1+\beta G_\beta(p,z)=\frac1{1-\beta^{s+1}},
\]
Proposition~\ref{pro:dmrwm}\textup{(b)} gives, for \(x\le z\) and \(t=\tau(x,z)\),
\[
g_\beta(x,z)=\frac{1-\beta^t}{1-\beta^{s+1}}
\longrightarrow \frac{t}{s+1}.
\]
Moreover,
\[
K_F^\beta(z)\triangleq r+\beta F_\beta(p,z)
=
\frac{r\{1+\beta\Phi_{s,\beta}(p)\}}{1-\beta^{s+1}},
\]
where \(\Phi_{s,\beta}\) denotes the discounted passive sum. Since \(\Phi_{k,\beta}(u)\to\bar\Phi_k(u)\) for each fixed finite \(k\),
\[
(1-\beta^t)K_F^\beta(z)-r\Phi_{t,\beta}(x)
\longrightarrow
r\left[
\frac{t}{s+1}\bigl(1+\bar\Phi_s(p)\bigr)-\bar\Phi_t(x)
\right].
\]
This proves part \textup{(b)(i)}. If \(x>z\), Proposition~\ref{pro:dmrwm}\textup{(b)} gives
\[
g_\beta(x,z)=(1-\beta)K_G^\beta(z),
\qquad
f_\beta(x,z)=r(x-1)+(1-\beta)K_F^\beta(z),
\]
and hence
\[
g^{\mathrm{avg}}(x,z)=\frac1{s+1},
\qquad
f^{\mathrm{avg}}(x,z)=r(x-1)+r\frac{1+\bar\Phi_s(p)}{s+1}
=r(x-1)+F^{\mathrm{avg}}(z),
\]
which proves part \textup{(b)(ii)}.

It remains to consider the endpoint threshold \(z=z_\infty\). For \(x\in\mathsf X^{\mathrm{inv}}\), the off-core condition \(x>z\) cannot occur. Therefore the nonexceptional branch of Proposition~\ref{pro:dmrwm}\textup{(c)} gives
\[
g_\beta(x,z_\infty)=1,
\qquad
f_\beta(x,z_\infty)=\frac{rx}{1-\beta\rho},
\]
so
\[
g^{\mathrm{avg}}(x,z_\infty)=1,
\qquad
f^{\mathrm{avg}}(x,z_\infty)=\frac{rx}{1-\rho}.
\]
The displayed formulas also show \(g^{\mathrm{avg}}(x,z)>0\) for all \(x\in\mathsf X^{\mathrm{inv}}\) and \(z\in\mathsf Z^{\mathrm{inv}}\), proving \eqref{eq:avg-apcli1-core}.
\end{proof}

\begin{proof}[Proof of Proposition~\ref{pro:avg-mpi-explicit}]
By \eqref{eq:avg-apcli1-core}, the diagonal ratio defining \(m^{\mathrm{avg}}\) is well defined on \(\mathsf X^{\mathrm{inv}}\).

If \(0\le x<p\), Proposition~\ref{pro:avg-mrwm}\textup{(a)} with \(z=x\) gives \(m^{\mathrm{avg}}(x)=rx\). If \(p\le x<z_\infty\), let \(t=\tau(p,x)\), equivalently \(z_{t-1}\le x<z_t\). With \(z=x\), we have \(\tau(x,x)=1\), since \(h(x)>x\) for \(x<z_\infty\). Proposition~\ref{pro:avg-mrwm}\textup{(b)(i)} gives
\[
g^{\mathrm{avg}}(x,x)=\frac1{t+1},
\qquad
f^{\mathrm{avg}}(x,x)
=
r\left[\frac{1+\bar\Phi_t(p)}{t+1}-(1-x)\right],
\]
and therefore
\[
m^{\mathrm{avg}}(x)=r\bigl[(t+1)x+\bar\Phi_t(p)-t\bigr].
\]
At \(x=z_\infty\), Proposition~\ref{pro:avg-mrwm}\textup{(c)} gives
\[
g^{\mathrm{avg}}(z_\infty,z_\infty)=1,
\qquad
f^{\mathrm{avg}}(z_\infty,z_\infty)=\frac{r z_\infty}{1-\rho},
\]
which yields the endpoint value.

The displayed formula is affine on each interval \((0,p)\) and \((z_{t-1},z_t)\), \(t\ge1\), with respective slopes \(r\) and \(r(t+1)\), all strictly positive. At \(p=z_0\), the first branch gives \(rp\), and the \(t=1\) middle branch gives
\[
r\{2p+\bar\Phi_1(p)-1\}=r\{2p+(1-p)-1\}=rp.
\]
At an internal breakpoint \(z_t\), the \(t\) and \(t+1\) middle branches agree because \(\bar\Phi_{t+1}(p)=\bar\Phi_t(p)+(1-z_t)\). Finally, as \(t\to\infty\), \(z_t\uparrow z_\infty\) and the middle-branch values converge to \(r z_\infty/(1-\rho)\), which equals the endpoint value. Hence \(m^{\mathrm{avg}}\) is continuous and strictly increasing on \(\mathsf X^{\mathrm{inv}}\).
\end{proof}

\begin{proof}[Proof of Proposition~\ref{pro:avg-pcl-verification}]
The positivity condition \textup{(APCLI1)} is exactly \eqref{eq:avg-apcli1-core}, and \textup{(APCLI2)} follows from Proposition~\ref{pro:avg-mpi-explicit}. It remains to verify the two PCL identities.

For \textup{(APCLI3a)}, the threshold gain metrics are right-continuous step functions of the threshold. Put \(\bar\Phi_0(p)=0\) and, for \(s\ge0\), define
\[
F_s\triangleq r\frac{1+\bar\Phi_s(p)}{s+1},
\qquad
G_s\triangleq\frac1{s+1},
\]
where \(s=0\) denotes the all-active interval \((-\infty,p)\), so \(F_0=r\) and \(G_0=1\). On \([z_{s-1},z_s)\), \(s\ge1\), the threshold metrics are \(F_s,G_s\). At the atom \(z_t\), \(t\ge0\), right-continuity gives the jump from \((F_t,G_t)\) to \((F_{t+1},G_{t+1})\). Hence
\[
\Delta G_t=G_{t+1}-G_t=-\frac1{(t+1)(t+2)},
\]
and, using \(\bar\Phi_{t+1}(p)=\bar\Phi_t(p)+(1-z_t)\),
\[
\Delta F_t
=F_{t+1}-F_t
=-\frac{r\{(t+1)z_t+\bar\Phi_t(p)-t\}}{(t+1)(t+2)}
=m^{\mathrm{avg}}(z_t)\Delta G_t.
\]
The jump tails are summable and there is no additional atom at \(z_\infty\). Summing these atom identities over the jump points in \((z_1,z_2]\) proves
\[
F^{\mathrm{avg}}(z_2)-F^{\mathrm{avg}}(z_1)
=
\int_{(z_1,z_2]}m^{\mathrm{avg}}(y)\,\mathrm{d}G^{\mathrm{avg}}(y),
\qquad z_1<z_2\le z_\infty,
\]
with endpoint-constant continuation on \((-\infty,p)\). This is \textup{(APCLI3a)}.

For \textup{(APCLI3b)}, fix \(x\in\mathsf X^{\mathrm{inv}}\) and put
\[
D_x(z)=f^{\mathrm{avg}}(x,z)-m^{\mathrm{avg}}(z)g^{\mathrm{avg}}(x,z),
\qquad z\in\mathsf X^{\mathrm{inv}}.
\]
Direct substitution of Propositions~\ref{pro:avg-mrwm} and~\ref{pro:avg-mpi-explicit} gives, for \(0\le z<z_\infty\),
\[
D_x(z)=
\begin{cases}
r(x-z), & z<x,\\[0.7ex]
r\bigl[t(1-z)-\bar\Phi_t(x)\bigr], & x\le z<z_\infty,
\quad t=\tau(x,z),
\end{cases}
\]
and at the endpoint
\[
D_x(z_\infty)=\frac{r(x-z_\infty)}{1-\rho}.
\]
The displayed formulas are continuous at the threshold-orbit breakpoints. Indeed, if \(c=h_j(x)\), the crossing index changes from \(j\) to \(j+1\), and the identity \(\bar\Phi_{j+1}(x)=\bar\Phi_j(x)+(1-c)\) makes the two adjacent values coincide; the breakpoints \(z_t\) are handled similarly using the continuity of \(m^{\mathrm{avg}}\), and the endpoint value is the limit as \(z\uparrow z_\infty\).

On every open interval on which the indices \(s=s(z)\) and \(t=\tau(x,z)\) are fixed, \(m^{\mathrm{avg}}\) is affine and \(g^{\mathrm{avg}}(x,\cdot)\) is constant. If \(z<x\), then \(D_x'(z)=-r\), while \(g^{\mathrm{avg}}(x,z)(m^{\mathrm{avg}})'(z)=r\), since either \(z<p\) or \(g^{\mathrm{avg}}(x,z)=1/(s+1)\) and \((m^{\mathrm{avg}})'(z)=r(s+1)\). If \(x\le z<z_\infty\), then \(D_x'(z)=-rt\). For \(z<p\), necessarily \(t=1\), \(g^{\mathrm{avg}}(x,z)=1\), and \((m^{\mathrm{avg}})'(z)=r\); for \(p\le z<z_\infty\), \(g^{\mathrm{avg}}(x,z)=t/(s+1)\) and \((m^{\mathrm{avg}})'(z)=r(s+1)\). Hence in both cases \(g^{\mathrm{avg}}(x,z)(m^{\mathrm{avg}})'(z)=rt\). Thus, in all continuity intervals,
\[
D_x'(z)=-g^{\mathrm{avg}}(x,z)(m^{\mathrm{avg}})'(z).
\]
Since \(m^{\mathrm{avg}}\) is continuous, its Lebesgue--Stieltjes measure has no atoms; summing the last differential identity over the finitely many subintervals below any \(z_t\), and then passing to the summable limit as needed near \(z_\infty\), yields
\[
D_x(z)=
\begin{cases}
\displaystyle
\int_{(z,x)}g^{\mathrm{avg}}(x,y)\,\mathrm{d}m^{\mathrm{avg}}(y), & z<x,\\[2ex]
\displaystyle
-\int_{[x,z]}g^{\mathrm{avg}}(x,y)\,\mathrm{d}m^{\mathrm{avg}}(y), & x\le z.
\end{cases}
\]
This is \textup{(APCLI3b)}. The sign identity \eqref{eq:avg-marginal-sign} is then \citet[Lemma~3.4]{nmwp0726}, applied with empty exceptional set.
\end{proof}

\begin{proof}[Proof of Theorem~\ref{the:avg-indexability}]
The discounted project is PCL-indexable for every \(\beta\in(0,1)\) by Theorem~\ref{the:adherence-indexability}. Proposition~\ref{pro:avg-acoe} verifies the average optimal-action structure required in \citet[Assumption~4.1]{nmwp0726}. Propositions~\ref{pro:avg-rwm}--\ref{pro:avg-pcl-verification} verify the gain, bias-marginal, positivity, monotonicity, regularity, and PCL identities on the invariant core, with no exceptional core states. Therefore \citet[Theorem~4.5]{nmwp0726} applies and gives threshold-indexability under the average criterion, identifies the Whittle index with \(m^{\mathrm{avg}}\), and identifies the optimal threshold maps as the generalized inverses of \(m^{\mathrm{avg}}\).
\end{proof}
}

\rev{
\section{Additional numerical results for Section~\ref{s:numerics}}
\label{supp:numerics}

This appendix collects supplementary numerical results for the three experiment
families introduced in Section~\ref{s:numerics}. Recall that all experiments
use \(\beta=0.99\), truncation horizon \(T_\pi=700\), and \(300\) Monte Carlo
runs per instance. The material below complements the main-text tables by
providing more disaggregated summaries and by documenting the instances on which
the myopic policy performs particularly poorly.

\subsection{Detailed summaries by experiment family}
\label{supp:numerics-summaries}

Tables~\ref{tab:S-gaps-exp1}--\ref{tab:S-gaps-exp3} report the full
distributions of relative Lagrangian gaps \(\gamma^\pi\) for the six policies
considered in the paper, separately for the three experiment families:
Experiment~1 (two-type stress), Experiment~2 (three-type stress), and
Experiment~3 (jittered finite-mixture heterogeneity). These tables complement
Table~\ref{tab:gaps-three-experiments} in the main text by showing the
dispersion of the gaps within each experiment family.

For compactness in the ratio tables, we write
\(\mathrm{Wh}\), \(\mathrm{my}\), \(\mathrm{Lag}\), and \(\mathrm{LagF}\)
for the Whittle, myopic, Lagrangian, and forced-capacity Lagrangian
index policies, respectively.

Likewise, Tables~\ref{tab:S-ratios-exp1}--\ref{tab:S-ratios-exp3} report the
corresponding distributions of reward ratios relative to the Whittle index
policy. Of
particular interest is the ratio
\(\bar V^{\mathrm{Wh}}/\bar V^{\mathrm{my}}\), which quantifies the practical improvement of Whittle's policy over
the myopic rule, and the ratio
\(\bar V^{\mathrm{LagF}}/\bar V^{\mathrm{Wh}}\), which confirms that the
forced-capacity Lagrangian index benchmark is nearly indistinguishable from Whittle's policy across
all three designs.

\begin{table}[!htbp]
\centering
\caption{Summary of relative Lagrangian gaps \(\gamma^\pi\) (in \%) for Experiment~1: two-type stress regime.}
\label{tab:S-gaps-exp1}
\begin{tabular}{lccccccc}
\hline
Policy & mean & std & min & \(q_{0.25}\) & median & \(q_{0.75}\) & max \\
\hline
Whittle index               & 0.40 & 0.25 & 0.11 & 0.21 & 0.35 & 0.50 & 1.34 \\
Myopic index           & 10.10 & 9.76 & 0.21 & 2.80 & 6.20 & 15.13 & 44.33 \\
Round-robin            & 3.59 & 2.66 & 0.36 & 1.73 & 3.09 & 4.77 & 13.94 \\
Lagrangian             & 1.95 & 1.23 & 0.34 & 1.05 & 1.83 & 2.37 & 8.67 \\
Forced-capacity Lag.   & 0.39 & 0.24 & 0.11 & 0.21 & 0.33 & 0.48 & 1.34 \\
Passive                & 61.64 & 13.17 & 32.47 & 52.72 & 62.89 & 71.54 & 86.03 \\
\hline
\end{tabular}
\end{table}

\begin{table}[!htbp]
\centering
\caption{Summary of relative Lagrangian gaps \(\gamma^\pi\) (in \%) for Experiment~2: three-type stress regime.}
\label{tab:S-gaps-exp2}
\begin{tabular}{lccccccc}
\hline
Policy & mean & std & min & \(q_{0.25}\) & median & \(q_{0.75}\) & max \\
\hline
Whittle index               & 0.43 & 0.20 & 0.20 & 0.30 & 0.35 & 0.53 & 1.09 \\
Myopic index           & 12.24 & 8.00 & 1.23 & 6.69 & 10.30 & 16.79 & 40.99 \\
Round-robin            & 5.23 & 2.13 & 1.46 & 3.70 & 5.57 & 6.58 & 10.63 \\
Lagrangian             & 1.75 & 0.79 & 0.30 & 1.22 & 1.74 & 2.24 & 4.91 \\
Forced-capacity Lag.   & 0.40 & 0.18 & 0.18 & 0.27 & 0.33 & 0.47 & 1.03 \\
Passive                & 63.80 & 12.22 & 37.91 & 53.83 & 64.32 & 72.91 & 84.96 \\
\hline
\end{tabular}
\end{table}

\begin{table}[!htbp]
\centering
\caption{Summary of relative Lagrangian gaps \(\gamma^\pi\) (in \%) for Experiment~3: jittered finite-mixture population.}
\label{tab:S-gaps-exp3}
\begin{tabular}{lccccccc}
\hline
Policy & mean & std & min & \(q_{0.25}\) & median & \(q_{0.75}\) & max \\
\hline
Whittle index               & 0.39 & 0.08 & 0.27 & 0.32 & 0.36 & 0.45 & 0.61 \\
Myopic index           & 9.05 & 3.32 & 2.96 & 5.40 & 9.23 & 12.22 & 14.50 \\
Round-robin            & 4.94 & 1.21 & 2.77 & 4.07 & 4.68 & 5.60 & 7.91 \\
Lagrangian             & 1.28 & 0.32 & 0.69 & 1.06 & 1.24 & 1.52 & 2.04 \\
Forced-capacity Lag.   & 0.36 & 0.08 & 0.25 & 0.30 & 0.34 & 0.41 & 0.59 \\
Passive                & 58.52 & 7.00 & 44.59 & 53.14 & 58.91 & 63.18 & 71.15 \\
\hline
\end{tabular}
\end{table}

\begin{table}[!htbp]
\centering
\caption{Summary of reward ratios relative to Whittle's policy for Experiment~1: two-type stress regime.}
\label{tab:S-ratios-exp1}
\begin{tabular}{lccccccc}
\hline
Metric & mean & std & min & \(q_{0.25}\) & median & \(q_{0.75}\) & max \\
\hline
\(\bar V^{\mathrm{Wh}}/\bar V^{\mathrm{my}}\) & 1.124 & 0.153 & 1.000 & 1.023 & 1.064 & 1.172 & 1.787 \\
\(\bar V^{\mathrm{Lag}}/\bar V^{\mathrm{Wh}}\)    & 0.984 & 0.013 & 0.915 & 0.981 & 0.987 & 0.993 & 1.000 \\
\(\bar V^{\mathrm{LagF}}/\bar V^{\mathrm{Wh}}\)   & 1.000 & 0.000 & 0.999 & 1.000 & 1.000 & 1.000 & 1.001 \\
\hline
\end{tabular}
\end{table}

\begin{table}[!htbp]
\centering
\caption{Summary of reward ratios relative to Whittle's policy for Experiment~2: three-type stress regime.}
\label{tab:S-ratios-exp2}
\begin{tabular}{lccccccc}
\hline
Metric & mean & std & min & \(q_{0.25}\) & median & \(q_{0.75}\) & max \\
\hline
\(\bar V^{\mathrm{Wh}}/\bar V^{\mathrm{my}}\) & 1.146 & 0.125 & 1.009 & 1.067 & 1.108 & 1.198 & 1.690 \\
\(\bar V^{\mathrm{Lag}}/\bar V^{\mathrm{Wh}}\)    & 0.987 & 0.008 & 0.953 & 0.982 & 0.987 & 0.993 & 1.000 \\
\(\bar V^{\mathrm{LagF}}/\bar V^{\mathrm{Wh}}\)   & 1.000 & 0.000 & 1.000 & 1.000 & 1.000 & 1.001 & 1.001 \\
\hline
\end{tabular}
\end{table}

\begin{table}[!htbp]
\centering
\caption{Summary of reward ratios relative to Whittle's policy for Experiment~3: jittered finite-mixture population.}
\label{tab:S-ratios-exp3}
\begin{tabular}{lccccccc}
\hline
Metric & mean & std & min & \(q_{0.25}\) & median & \(q_{0.75}\) & max \\
\hline
\(\bar V^{\mathrm{Wh}}/\bar V^{\mathrm{my}}\) & 1.097 & 0.039 & 1.027 & 1.054 & 1.097 & 1.135 & 1.166 \\
\(\bar V^{\mathrm{Lag}}/\bar V^{\mathrm{Wh}}\)    & 0.991 & 0.003 & 0.984 & 0.989 & 0.991 & 0.993 & 0.997 \\
\(\bar V^{\mathrm{LagF}}/\bar V^{\mathrm{Wh}}\)   & 1.000 & 0.000 & 1.000 & 1.000 & 1.000 & 1.000 & 1.000 \\
\hline
\end{tabular}
\end{table}

\subsection{Worst instances for the myopic policy in the two-type stress regime}
\label{supp:worst-myopic}

Table~\ref{tab:S-worst-myopic} lists the ten simulation instances in
Experiment~1 with the largest ratio between the myopic and Whittle relative
Lagrangian gaps, \(\gamma^{\mathrm{my}}/\gamma^{\mathrm{Wh}}\), and also
reports the corresponding gaps of the round-robin, Lagrangian,
forced-capacity Lagrangian, and passive baselines. Because Experiment~1 varies
both reward weights and population size, the table includes \((r^A,r^B)\) and \(N\). The ten worst cases are concentrated in one highly adverse structural
regime: a balanced population with a substantially more lapse-prone type \(B\),
very low spontaneous recovery probabilities, and tight capacity. The repeated
appearance of this regime across several values of \(N\) and reward profiles
indicates that the poor performance of the myopic rule is not a finite-size
artifact.

\begin{table}[!htbp]
\centering
\caption{Ten instances in Experiment~1 with the largest ratio between the myopic and Whittle relative Lagrangian gaps.}
\label{tab:S-worst-myopic}
\scriptsize
\setlength{\tabcolsep}{2.2pt}
\renewcommand{\arraystretch}{1.05}
\begin{tabular}{ccccccccc cccccc c}
\hline
\(p^A\) & \(q^A\) & \(p^B\) & \(q^B\) & \(r^A\) & \(r^B\) &
\(\mathrm{prop}_A\) & \(N\) & \(M/N\) &
\(\gamma^{\mathrm{Wh}}\) & \(\gamma^{\mathrm{my}}\) & \(\gamma^{\mathrm{rr}}\) &
\(\gamma^{\mathrm{Lag}}\) & \(\gamma^{\mathrm{LagF}}\) & \(\gamma^{\mathrm{pas}}\) &
\(\gamma^{\mathrm{my}}/\gamma^{\mathrm{Wh}}\) \\
\hline
0.05 & 0.01 & 0.35 & 0.01 & 1 & 1 & 0.5 & 200  & 0.1 & 0.20 & 32.73 & 1.25 & 1.53 & 0.20 & 77.05 & 161.10 \\
0.05 & 0.01 & 0.35 & 0.01 & 2 & 2 & 0.5 & 200  & 0.1 & 0.21 & 32.73 & 1.26 & 1.51 & 0.20 & 77.08 & 159.45 \\
0.05 & 0.01 & 0.35 & 0.01 & 1 & 1 & 0.5 & 100  & 0.1 & 0.21 & 32.71 & 1.26 & 1.55 & 0.20 & 77.06 & 158.82 \\
0.05 & 0.01 & 0.35 & 0.01 & 1 & 1 & 0.5 & 700  & 0.1 & 0.21 & 32.75 & 1.26 & 1.48 & 0.21 & 77.07 & 158.16 \\
0.05 & 0.01 & 0.35 & 0.01 & 1 & 1 & 0.5 & 400  & 0.1 & 0.21 & 32.75 & 1.26 & 1.50 & 0.21 & 77.08 & 157.76 \\
0.05 & 0.01 & 0.35 & 0.01 & 2 & 2 & 0.5 & 100  & 0.1 & 0.21 & 32.71 & 1.26 & 1.56 & 0.21 & 77.09 & 157.56 \\
0.05 & 0.01 & 0.35 & 0.01 & 2 & 2 & 0.5 & 1000 & 0.1 & 0.21 & 32.76 & 1.26 & 1.46 & 0.21 & 77.09 & 157.33 \\
0.05 & 0.01 & 0.35 & 0.01 & 2 & 2 & 0.5 & 400  & 0.1 & 0.21 & 32.75 & 1.26 & 1.51 & 0.21 & 77.09 & 157.30 \\
0.05 & 0.01 & 0.35 & 0.01 & 1 & 1 & 0.5 & 1000 & 0.1 & 0.21 & 32.75 & 1.26 & 1.47 & 0.21 & 77.09 & 157.10 \\
0.05 & 0.01 & 0.35 & 0.01 & 2 & 2 & 0.5 & 700  & 0.1 & 0.21 & 32.75 & 1.26 & 1.47 & 0.21 & 77.09 & 156.08 \\
\hline
\end{tabular}
\end{table}

For these ten instances, \(\bar V^{\mathrm{my}}/\bar V^{\mathrm{Wh}}\)
is approximately \(0.67\), so the myopic rule loses roughly one third of 
the Whittle reward in this most adverse regime. By contrast, the forced-capacity
Lagrangian index policy is essentially tied with Whittle's policy on all ten cases, further
supporting its role as a strong VFA/Lagrangian benchmark.

\subsection{Breakdowns by capacity, reward profile, and population law}
\label{supp:numerics-breakdowns}

Tables~\ref{tab:S-by-capacity-exp1}--\ref{tab:S-by-capacity-exp3} report, for each experiment family, mean reward ratios \(\bar V^{\mathrm{Wh}}/\bar V^{\mathrm{my}}\) by capacity ratio \(M/N\). These tables make explicit the pattern summarized in the main text: the Whittle advantage is largest at \(M/N=0.05\), smaller at \(M/N=0.10\), and smaller still at \(M/N=0.20\).

For Experiment~1, Table~\ref{tab:S-by-rprofile-exp1} further breaks down results by the reward profile \((r^A,r^B)\). The Whittle/myopic reward ratio is smallest when the more adherence-persistent type receives the larger reward weight. Recall that the myopic rule assigns priority \(r_n x\), the immediate expected reward gain from activation, whereas Whittle's policy assigns priority \(m_n(x)\), which accounts for future adherence dynamics. Thus, the smaller ratio for \((r^A,r^B)=(2,1)\) suggests that giving the larger reward to the more adherence-persistent type makes the myopic priority closer to the dynamic index priority. In contrast, when \(r^A=r^B\) or the more lapse-prone type receives the larger reward weight, the Whittle advantage remains large.

For Experiment~2, Table~\ref{tab:S-by-composition-exp2} reports the same summaries by three-type population composition, while for Experiment~3, Table~\ref{tab:S-by-mixture-exp3} separates the baseline and stress mixture laws.

\begin{table}[!htbp]
\centering
\caption{Mean reward ratio \(\bar V^{\mathrm{Wh}}/\bar V^{\mathrm{my}}\) by capacity ratio \(M/N\) in Experiment~1.}
\label{tab:S-by-capacity-exp1}
\begin{tabular}{cccc}
\hline
\(M/N\) & 0.05 & 0.10 & 0.20 \\
\hline
mean \(\bar V^{\mathrm{Wh}}/\bar V^{\mathrm{my}}\) & 1.193 & 1.132 & 1.047 \\
\hline
\end{tabular}
\end{table}

\begin{table}[!htbp]
\centering
\caption{Mean reward ratio \(\bar V^{\mathrm{Wh}}/\bar V^{\mathrm{my}}\) by capacity ratio \(M/N\) in Experiment~2.}
\label{tab:S-by-capacity-exp2}
\begin{tabular}{cccc}
\hline
\(M/N\) & 0.05 & 0.10 & 0.20 \\
\hline
mean \(\bar V^{\mathrm{Wh}}/\bar V^{\mathrm{my}}\) & 1.211 & 1.163 & 1.063 \\
\hline
\end{tabular}
\end{table}

\begin{table}[!htbp]
\centering
\caption{Mean reward ratio \(\bar V^{\mathrm{Wh}}/\bar V^{\mathrm{my}}\) by capacity ratio \(M/N\) in Experiment~3.}
\label{tab:S-by-capacity-exp3}
\begin{tabular}{cccc}
\hline
\(M/N\) & 0.05 & 0.10 & 0.20 \\
\hline
mean \(\bar V^{\mathrm{Wh}}/\bar V^{\mathrm{my}}\) & 1.132 & 1.110 & 1.048 \\
\hline
\end{tabular}
\end{table}

\begin{table}[!htbp]
\centering
\caption{Mean reward ratio \(\bar V^{\mathrm{Wh}}/\bar V^{\mathrm{my}}\) by reward profile \((r^A,r^B)\) in Experiment~1.}
\label{tab:S-by-rprofile-exp1}
\begin{tabular}{ccccc}
\hline
\((r^A,r^B)\) & \((1,1)\) & \((1,2)\) & \((2,1)\) & \((2,2)\) \\
\hline
mean \(\bar V^{\mathrm{Wh}}/\bar V^{\mathrm{my}}\) & 1.164 & 1.149 & 1.021 & 1.164 \\
\hline
\end{tabular}
\end{table}

\begin{table}[!htbp]
\centering
\caption{Mean reward ratio \(\bar V^{\mathrm{Wh}}/\bar V^{\mathrm{my}}\) by population composition in Experiment~2.}
\label{tab:S-by-composition-exp2}
\begin{tabular}{lccc}
\hline
Composition \((\mathrm{prop}_A,\mathrm{prop}_B,\mathrm{prop}_C)\) & \((0.7,0.2,0.1)\) & \((0.6,0.3,0.1)\) & \((0.5,0.3,0.2)\) \\
\hline
mean \(\bar V^{\mathrm{Wh}}/\bar V^{\mathrm{my}}\) & 1.148 & 1.136 & 1.154 \\
\hline
\end{tabular}
\end{table}

\begin{table}[!htbp]
\centering
\caption{Mean reward ratio \(\bar V^{\mathrm{Wh}}/\bar V^{\mathrm{my}}\) by mixture law in Experiment~3.}
\label{tab:S-by-mixture-exp3}
\begin{tabular}{lcc}
\hline
Mixture law & baseline & stress \\
\hline
mean \(\bar V^{\mathrm{Wh}}/\bar V^{\mathrm{my}}\) & 1.087 & 1.106 \\
\hline
\end{tabular}
\end{table}

The composition and mixture-law breakdowns show that the Whittle policy's advantage is not
confined to a single population mix: it remains large across the three-type
compositions and is somewhat stronger under the stress mixture than under the
baseline mixture.

Overall, these supplementary results reinforce the main-text conclusions. The
Whittle policy is uniformly close to the Lagrangian bound, the forced-capacity
Lagrangian index policy is nearly indistinguishable from Whittle's, and the myopic rule can
be substantially misaligned in low-\(q\), lapse-prone stress regimes,
especially when capacity is tight.
}


\end{document}